\newcommand{\mathscripty}{\mathscr}
\newcommand{\e}{\varepsilon}
\newcommand{\eps}{\varepsilon}
\newcommand{\Z}{\mathbb{Z}}
\newcommand{\D}{\mathbb{D}}
\newcommand{\N}{\mathbb{N}}
\newcommand{\C}{\mathbb{C}}
\newcommand{\bA}{A}
\newcommand{\cV}{\mathcal{V}}
\newcommand{\cY}{\mathcal{Y}}
\newcommand{\cX}{\mathcal{X}}
\theoremstyle{Case1}
\theoremstyle{Case2}
\newcommand{\Lip}{\mathrm{Lip}}
\newcommand{\cstu}{\mathrm{C}^*_u}
\newcommand{\csts}{\mathrm{C}^*_s}
\newcommand{\roeq}{\mathrm{Q}^*_u}
\newtheorem*{rigprob*}{Rigidity Problem for Uniform Roe Algebras}
\newtheorem*{rigprobcorona*}{Rigidity Problem for Uniform Roe Coronas}
\newcommand{\SI}{\mathscripty{I}}
\newcommand{\SJ}{\mathscripty{J}}
\newcommand{\cst}{\mathrm{C}^*}
\newcommand{\cstar}{$\mathrm{C}^*$}
\newcommand{\cU}{\mathcal{U}}
\newcommand{\cP}{\mathcal{P}}
\newcommand{\bbN}{\mathbb{N}}
\newcommand{\cB}{\mathcal{B}}
\newcommand{\cK}{\mathcal{K}}
\newtheorem{theorem}{Theorem}[section]
\newtheorem*{theorem*}{Theorem}
\newtheorem{proposition}[theorem]{Proposition}
\newtheorem{problem}[theorem]{Problem}
\newtheorem*{proposition*}{Proposition}
\newtheorem{lemma}[theorem]{Lemma}
\newtheorem*{lemma*}{Lemma}
\newtheorem{corollary}[theorem]{Corollary}
\newtheorem*{corollary*}{Corollar}
\newtheorem*{fact*}{Fact}
\theoremstyle{definition}
\newtheorem{definition}[theorem]{Definition}
\newtheorem*{definition*}{Definition}
\newtheorem{claim}[theorem]{Claim}
\newtheorem*{claim*}{Claim}
\newtheorem*{proofsketch}{{\normalfont\textit{Sketch of the proof}}}
\newtheorem*{conjecture*}{Conjecture}
\newtheorem{assumption}[theorem]{Assumption}
\theoremstyle{remark}
\newtheorem*{example*}{Example}
\newtheorem{remark}[theorem]{Remark}
\newtheorem*{remark*}{Remark}
\newtheorem*{note*}{Note}
\newtheorem*{question*}{Question}
\newcommand{\norm}[1]{\left\lVert #1 \right\rVert}
\DeclareMathOperator{\supp}{supp}
\DeclareMathOperator{\propg}{prop}
\DeclareMathOperator{\rank}{rank}
\DeclareMathOperator{\diam}{diam}
\newcounter{my_enumerate_counter}
\newcommand{\pushcounter}{\setcounter{my_enumerate_counter}{\value{enumi}}}
\newcommand{\popcounter}{\setcounter{enumi}{\value{my_enumerate_counter}}}
\begin{document}

\title{Embeddings of uniform Roe algebras}%

\author[B. M. Braga]{Bruno M. Braga}
\address[B. M. Braga]{Department of Mathematics and Statistics,
York University,
4700 Keele Street,
Toronto, Ontario, Canada, M3J
1P3}
\email{demendoncabraga@gmail.com}
\urladdr{https://sites.google.com/site/demendoncabraga}

\author[I. Farah]{Ilijas Farah}
\address[I. Farah]{Department of Mathematics and Statistics,
York University,
4700 Keele Street,
Toronto, Ontario, Canada, M3J
1P3}
\email{ifarah@mathstat.yorku.ca}
\urladdr{http://www.math.yorku.ca/$\sim$ifarah}

\author[A. Vignati]{Alessandro Vignati}
\address[A. Vignati]{ Department of Mathematics, KU Leuven, Celestijnenlaan 200B, B-3001 Leuven, Belgium
}
\email{ale.vignati@gmail.com}
\urladdr{http://www.automorph.net/avignati}

\subjclass[2010]{}
\keywords{}
\thanks{}
\date{\today}%
\maketitle

\begin{abstract}
In this paper, we study embeddings of uniform Roe algebras. Generally speaking, given metric spaces $X$ and $Y$, we are interested in which large scale geometric properties are stable under embedding of the uniform Roe algebra of $X$ into the uniform Roe algebra of~$Y$. 
\end{abstract}

\setcounter{tocdepth}{1}
\tableofcontents

\section{Introduction}
Given a metric space $X$, one can define its uniform Roe algebra, denoted by $\cstu(X)$. In a nutshell, the uniform Roe algebra of $X$ is the \cstar-subalgebra of $\cB(\ell_2(X))$  consisting of the norm closure of the algebra of finite propagation operators (see Definition \ref{DefUnifRoeAlg} for details). A version of this algebra was introduced by J. Roe in order to study the index theory of elliptical operators on noncompact manifolds  \cite{Roe1988,Roe1993}; later the study of these algebras was boosted     due to its intrinsic relation with the coarse Baum-Connes conjecture and, consequently, with the Novikov conjecture~\cite{Yu2000}. More recently, uniform Roe algebras (as well as Roe algebras) and its $\mathrm{K}$-theory have also been used as a framework in mathematical physics to study the classification of topological phases and  the topology of quantum systems -- in this setting, $\mathrm{K}_0(\cstu(X))$ can be interpreted as  the set of controlled topological phases \cite{EwertMeyer2019,Kubota2017}.

These notes deal with the problem of \emph{rigidity of uniform Roe algebras under embeddings}. Precisely, the rigidity problem for isomorphism of uniform Roe algebras -- i.e., the problem whether isomorphism between $\cstu(X)$ and $\cstu(Y)$ implies coarse equivalence  between $X$ and $Y$ -- has been studied by several authors \cite{BragaFarah2018,BragaFarahVignati2018,SpakulaWillett2013AdvMath}, and, under some geometric assumptions on the metric spaces, many positive results were obtained. Our goal in this paper is to study what can be said about the large scale geometry of a metric space $X$ given that $\cstu(X)$ embeds into the uniform Roe algebra of a metric space $Y$. Just as in the isomorphism case, some geometric assumptions   will be necessary. 

Let $X$ be a uniformly locally finite metric space (we refer the reader to  \S\ref{SectionPrelim} for definitions). An operator $a\in\cB(\ell_2(X))$ is a \emph{ghost} if for all $\varepsilon>0$ there exists a finite $A\subset X$ such that $|\langle a\delta_x,\delta_y\rangle|<\varepsilon$ for all $x,y\in X\setminus A$ -- here $(\delta_x)_{x\in X}$ denotes the standard unit basis of $\ell_2(X)$. Clearly, every compact operator is a ghost, and a uniformly locally finite metric space has \emph{property A} if and only if all ghosts in $\cstu(X)$ are compact \cite[Theorem 1.3]{RoeWillett2014}.

Property A represents the usual regularity condition given on $X$ when studying uniform Roe algebras. Notably, for uniformly locally finite   spaces, it is equivalent to amenability of the \cstar-algebra $\cstu(X)$ \cite[Theorem 5.3]{SkandalisTuYu2002}. In these notes we will often assume geometrical properties which are strictly weaker than property A itself. The main one relies on the following definition.

\begin{definition}
Let  $(X,d)$ be a metric space.
\begin{enumerate}
\item The space $X$ is  \emph{sparse} if there exists a partition $X=\bigsqcup_{n\in \N}X_n$ of $X$ into finite subsets such that $d(X_n,X_m)\to \infty$ as $n+m\to \infty$. 
\item The space  $X$ \emph{yields only compact ghost projections} if every ghost projection in $\cstu(X)$ is compact.
\end{enumerate}  
\end{definition}

In this paper we will often use the following condition:  \emph{all of the sparse subspaces of $X$ yield only compact ghost projections}. Although  technical, this property is quite general. The diagram below illustrates how it sits among some ``classic'' large scale properties. 
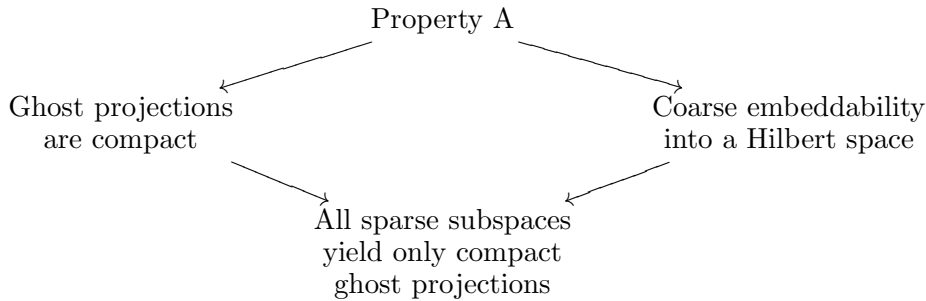
\begin{figure*}[h] \centerline{%
 \xymatrix@R-2ex{ 
         & \text{Property A}  \ar[dl]\ar[dr] &  \\
        \txt{Ghost projections \\ are compact} \ar[dr]& & \txt{Coarse  embeddability\\ into a Hilbert space}\ar[dl]\\
         & \txt{All sparse subspaces\\yield only compact\\ ghost projections } &   }
}
\caption{Relation between some geometric properties on a uniformly locally finite  metric space $X$.}
\end{figure*}

\noindent The arrow in the top left of the diagram above follows straightforwardly from the characterization of property A given above, and the top right arrow follows from \cite[Theorem 2.2]{Yu2000}. Both those arrows do not reverse, see \cite[Theorem 1.1]{ArzhantsevaGuentnerSpakula2012} and \cite[Theorem 1.4]{RoeWillett2014}. The bottom left arrow follows straight from the definitions, while the right bottom arrow is a consequence of \cite[Proposition 35]{FinnSell2014} (see \cite[Lemma 7.3]{BragaFarah2018} for details). We do not know whether the bottom arrows reverse.

In order to better understand our results on embeddings of uniform Roe algebras, it is important to recall the state-of-the-art regarding the rigidity of uniform Roe algebras under isomorphisms. It was proved in \cite[Theorem 4.1]{SpakulaWillett2013AdvMath} that if $\cstu(X)$ is isomorphic to $\cstu(Y)$, then $X$ and $Y$ are coarsely equivalent provided the metric spaces have  property A. This result was strengthened in \cite[Corollary 1.2 and Corollary 1.3]{BragaFarah2018}   for metric spaces which coarsely embed into a Hilbert space and for metric spaces yielding only compact ghost projections. Although not explicitly written, it follows from \cite{BragaFarah2018} that the same holds with the (formally) weaker assumption that all the sparse subspaces of the metric spaces yield only compact ghost projections.

A key ingredient for the rigidity results above  is to notice that, given an isomorphism  $\Phi\colon\cstu(X)\to \cstu(Y)$, there exists a unitary $U\colon \ell_2(X)\to \ell_2(Y)$ such that $\Phi(a)=UaU^*$ for all $a\in\cstu(X)$ (see \cite[Lemma 3.1]{SpakulaWillett2013AdvMath}). From this, one shows that (i) $\Phi$ is rank preserving, i.e., $\Phi$ sends minimal projections to minimal projections, and (ii) $\Phi$ is strongly continuous. Those properties play an essential role in the proofs of the rigidity theorems mentioned above. Unfortunately a nonsurjective $\Phi$ does not automatically satisfy neither (i) nor (ii), therefore we will need to substantially change our approach.

We now describe the main results in our paper. As we see below (Proposition \ref{PropositionEmbDoesNotImpCoarseEmb}), embeddability of  a uniform Roe algebra $\cstu(X)$ into $\cstu(Y)$ does not imply coarse embeddability of $X$ into $Y$ -- even under strong geometric conditions on $X$ and $Y$. 
However, assuming the embedding is compact preserving, i.e., $\Phi(a)$ is compact for all compact $a\in\cstu(X)$, we obtain the following (part \eqref{T12.2} is trivial, and in fact holds for all metric spaces $X$ and $Y$). 

\begin{theorem}\label{ThmRigidityUniformRoeAlgEmbeddingsFINITEUNION}
 Let $X$ and $Y$ be uniformly locally finite metric spaces and assume that all the sparse subspaces of $Y$ yield only compact ghost projections. 
 \begin{enumerate}
 \item \label{T12.2} If there exists an injective  coarse map $X\to Y$,  then $\cstu(X)$ embeds into $\cstu(Y)$. 
 \item \label{T12.1} If   $\cstu(X)$ embeds into $\cstu(Y)$ by a compact preserving map, then   there are $k\geq 1$ and  a partition $X=\bigsqcup_{n=1}^k X_n$ so that  for all $n\in \{1,\ldots, k\}$ there exists an injective  coarse map $X_n\to Y$.\footnote{Recall, a coarse map $f\colon (X,d)\to (Y,\partial)$ is a map so that for all $s>0$ there exists $r>0$ such that $d(x,y)<s$ implies $\partial (f(x),f(y))<r$. See \S\ref{SectionPrelim}.} 
 \end{enumerate}
 \end{theorem}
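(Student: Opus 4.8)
The plan is to attack part \eqref{T12.1}; as noted, part \eqref{T12.2} is routine (send $\delta_x$ to $\delta_{f(x)}$ for an injective coarse $f$, which induces an embedding of $\cstu(X)$ into $\cstu(Y)$ since it takes finite propagation operators to finite propagation operators). So fix a compact-preserving embedding $\Phi\colon\cstu(X)\to\cstu(Y)$. The first step is to understand what $\Phi$ does to the minimal (rank-one) projections $e_x:=e_{\delta_x}\in\cstu(X)$. Since $\Phi$ is a compact-preserving $*$-homomorphism, each $\Phi(e_x)$ is a nonzero projection in the compact operators $\mathcal K(\ell_2(Y))\cap\cstu(Y)$, hence has finite rank, and the $\Phi(e_x)$ are pairwise orthogonal. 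The key point will be that these ranks are \emph{uniformly bounded}: I would argue that if the ranks were unbounded along some sequence $x_n$, one could extract a sparse subspace $S=\bigsqcup\{x_n\}$ of $X$ (singletons are trivially sparse after passing to a subsequence making distances go to infinity — here one uses uniform local finiteness of $X$ to thin out) and use the corresponding cut-down of $\Phi$ to produce, inside the image, a noncompact ghost projection supported on a sparse subspace of $Y$, contradicting the hypothesis on $Y$. The mechanism is that a weak-$*$ limit point of the normalized partial sums of large-rank orthogonal projections with shrinking matrix coefficients is a ghost; controlling propagation via a sparse decomposition of the relevant subspace of $Y$ is what makes it a ghost \emph{projection}.

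Once the ranks are bounded, say $\rank\Phi(e_x)\le k$ for all $x$, the heart of the argument is a Ramsey-type / pigeonhole decomposition of $X$ into $k$ pieces on each of which $\Phi$ is ``rank-one-like.'' Concretely, for each $x$ pick an orthonormal basis $\xi^x_1,\dots,\xi^x_{m(x)}$ of $\ran\Phi(e_x)$ with $m(x)\le k$. I want to choose, for each $x$, a single unit vector $\eta_x\in\ran\Phi(e_x)$, and a coloring $c\colon X\to\{1,\dots,k\}$, so that on each color class $X_n:=c^{-1}(n)$ the assignment $x\mapsto\eta_x$ behaves like a coarse map into $Y$: that is, the ``approximate support'' of $\eta_x$ in $Y$ moves coarsely with $x$. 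The obstruction to doing this globally is exactly why the statement only gives a partition into $k$ pieces rather than a single coarse map. To extract a genuine injective coarse map $X_n\to Y$ I would approximate each $\eta_x$ by a finitely supported vector and let $f(x)$ be (a choice of) a point of $Y$ near the support; finite propagation of elements of $\cstu(X)$ — applied to partial isometries $v$ with $v^*v=e_x$, $vv^*=e_{x'}$ for $d(x,x')$ small — forces $\Phi(v)$ to have finite propagation, hence forces $f(x)$ and $f(x')$ to be close in $Y$, giving coarseness; orthogonality of the $\Phi(e_x)$ together with a further thinning of each color class gives injectivity (and one absorbs the finitely many collisions by refining the partition, at the cost of enlarging $k$).

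The step I expect to be the main obstacle is making the vectors $\eta_x$ ``coarsely track'' $x$ in a way that is uniform enough to yield a coarse map — i.e., converting the qualitative fact that finite propagation operators go to finite propagation operators into a quantitative statement about the supports of the $\eta_x$. The difficulty is that $\Phi(v_{x,x'})$ being finite propagation gives, for each fixed \emph{pair} $(x,x')$, a bound on how far $\ran\Phi(e_x)$ and $\ran\Phi(e_{x'})$ can be separated in $Y$, but a priori this bound could depend on the pair; one needs to upgrade to a bound depending only on $d(x,x')$. I would handle this with a uniform-boundedness / gliding-hump argument: if no such uniform bound existed for some scale $s$, assemble a single operator in $\cstu(X)$ (a block-diagonal sum of the offending $v_{x_j,x'_j}$, which has propagation $\le s$ hence lies in $\cstu(X)$) whose image under $\Phi$ fails to have finite propagation — contradiction. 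With that uniformity in hand, the coloring and the construction of $f$ on each class are bookkeeping, and injectivity plus coarseness follow as sketched. A secondary technical point is ensuring the color classes $X_n$ can be taken to be a genuine partition of $X$ (not just a cover); this is immediate since a coloring is by definition a partition.
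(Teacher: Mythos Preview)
Your outline has the right overall shape, but the central step --- how exactly to partition $X$ --- is not an argument, and the mechanism you sketch cannot be made to work as stated. You propose to pick orthonormal bases $\xi^x_1,\dots,\xi^x_{m(x)}$ of $\ran\Phi(e_x)$ and then ``color'' $X$ so that on each color class the assignment $x\mapsto\eta_x$ is coarse. But the ranges $\ran\Phi(e_x)$ are pairwise \emph{orthogonal} subspaces of $\ell_2(Y)$; there is no common finite-dimensional space in which to compare these vectors, and no rule is given for assigning colors. Without such a rule there is no partition, and without a partition there is no proof.

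The paper's proof supplies precisely this missing idea, and it does not go through a uniform rank bound at all. Instead one first establishes (this is where the sparse-ghost-projection hypothesis is actually used, via a ccc/Fin argument) that
\[
\delta:=\inf_{x\in X}\sup_{y\in Y}\|\Phi(e_{xx})e_{yy}\|>0,
\]
and chooses $f(x)\in Y$ realizing this; this $f$ is automatically $\lfloor\delta^{-2}\rfloor$-to-one. The key trick is then to fix a single $x_0\in X$ and use the partial isometries $\Phi(e_{xx_0})$ to transport the vectors $\delta_{f(x)}$ into the \emph{one} finite-dimensional space $\mathrm{Im}(\Phi(e_{x_0x_0}))$: the points $\Phi(e_{xx_0})\delta_{f(x)}$ all lie in a compact annulus there, which one chops into finitely many pieces of diameter $<\delta$. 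That is what produces the partition $X=\bigsqcup X_n$. Coarseness of $f\restriction X_n$ then follows from an ``almost coarse-like'' property (your gliding-hump intuition is in the right direction, but note the contradiction is not that the image ``fails to have finite propagation'' --- elements of $\cstu(Y)$ need not have finite propagation --- but that it fails to be uniformly $\varepsilon$-$m$-approximable). Your argument for a uniform rank bound via ghost projections is also not justified as written: large rank does not by itself force small matrix coefficients, so the ``weak-$*$ limit is a ghost'' step does not go through.
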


If we furthermore assume that the embedding $\cstu(X)\to \cstu(Y)$ in Theorem \ref{ThmRigidityUniformRoeAlgEmbeddingsFINITEUNION} \eqref{T12.1} is rank preserving, then there exists a coarse map $f\colon X\to Y$ which is  \emph{uniformly finite-to-one}, i.e., $\sup_{y\in Y}|f^{-1}(\{y\})|<\infty$ (see Theorem \ref{ThmRigidityUniformRoeAlgEmbeddings}).

The next corollary is the main motivation for Theorem \ref{ThmRigidityUniformRoeAlgEmbeddingsFINITEUNION}. It shows that we have an interesting large scale geometry preservation phenomenon happening in case of an embedding $\cstu(X)\to\cstu(Y)$.

\begin{corollary}\label{CorRigidityUniformRoeAlgEmbeddingsAsymDim}
Let $X$ and $Y$ be uniformly locally finite metric spaces and assume that    $\cstu(X)$  embeds into $\cstu(Y)$ by a compact preserving map. Then
\begin{enumerate}
\item\label{CorRigidityUniformRoeAlgEmbeddingsAsymDim.1}  $\mathrm{asydim}(X)\leq \mathrm{asydim}(Y)$, 
\item\label{CorRigidityUniformRoeAlgEmbeddingsAsymDim.1b} if $Y$ has property A, so does $X$, and
\item \label{CorRigidityUniformRoeAlgEmbeddingsAsymDim.2} if $Y$ has finite decomposition complexity (see \S\ref{SubsectionFDC}),  then so does $X$.
\end{enumerate}
\end{corollary}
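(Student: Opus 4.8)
The plan is to reduce all three parts to Theorem \ref{ThmRigidityUniformRoeAlgEmbeddingsFINITEUNION}\eqref{T12.1}, so the first point is that its hypothesis is available in each case. Both finite asymptotic dimension and finite decomposition complexity imply property A, and property A implies that all sparse subspaces yield only compact ghost projections (the left-hand path in the diagram of the introduction). Hence in \eqref{CorRigidityUniformRoeAlgEmbeddingsAsymDim.1} — where we may assume $\mathrm{asydim}(Y)<\infty$, the inequality being vacuous otherwise — and in \eqref{CorRigidityUniformRoeAlgEmbeddingsAsymDim.1b} and \eqref{CorRigidityUniformRoeAlgEmbeddingsAsymDim.2}, the space $Y$ satisfies the assumption of Theorem \ref{ThmRigidityUniformRoeAlgEmbeddingsFINITEUNION}, which therefore supplies $k\geq 1$, a partition $X=\bigsqcup_{n=1}^k X_n$, and injective coarse maps $f_n\colon X_n\to Y$.

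The core of the proof is then a transfer principle: if $f\colon Z\to Y$ is a coarse map between uniformly locally finite metric spaces such that $f$-preimages of uniformly bounded sets have uniformly bounded cardinality — which holds for each $f_n$ since it is injective and $Y$ is uniformly locally finite — then $\mathrm{asydim}(Z)\le\mathrm{asydim}(Y)$, $Z$ has property A whenever $Y$ does, and $Z$ has finite decomposition complexity whenever $Y$ does. The first idea is to pull back along $f^{-1}$ a structure witnessing the relevant property for $Y$ (a uniformly bounded cover with controlled $R$-multiplicity for asymptotic dimension, a partition of unity of small variation subordinate to a uniformly bounded cover for property A, an iterated $R$-decomposition for finite decomposition complexity); since $f$ is coarse, the preimage of an $r$-separated family is $R$-separated for a suitable $R=R(r)$, so the only defect is that the preimage of a uniformly bounded set need not be uniformly bounded in \emph{diameter}. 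This is repaired by replacing each preimage $f^{-1}(V)$ by its family of $R$-clusters (the components of the graph on $f^{-1}(V)$ joining points within distance $R$): if $\lvert f^{-1}(V)\rvert\le M$, these clusters are pairwise more than $R$ apart and each has diameter at most $(M-1)R$, a bound independent of $V$. One checks that this refinement is harmless for each of the three invariants — using in particular that any two points within distance $R$ of one another lie in the same cluster, so that restricting a pulled-back partition of unity to the clusters does not increase its variation.

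Applying the transfer principle to each $f_n$ yields $\mathrm{asydim}(X_n)\le\mathrm{asydim}(Y)$ for every $n$, and property A, respectively finite decomposition complexity, for every $X_n$ when $Y$ has it. The proof is then completed by the finite permanence properties of these invariants: $\mathrm{asydim}\bigl(\bigcup_{n=1}^k X_n\bigr)=\max_{1\le n\le k}\mathrm{asydim}(X_n)$, and both property A and finite decomposition complexity are preserved by finite unions.

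The main obstacle is the transfer principle itself. The subtlety is that an injective coarse map between uniformly locally finite spaces need not be a coarse embedding — for instance $\{2^n:n\in\N\}\to\Z$, $2^n\mapsto n$, is injective and coarse but not expansive — so one cannot simply regard $X_n$ as coarsely equivalent to a subspace of $Y$ and invoke monotonicity of the invariants under coarse embeddings; the cluster refinement is precisely what compensates for the lack of properness, and verifying that it meshes with the three structurally different characterizations (covers, partitions of unity, iterated decompositions) is where the real work lies.
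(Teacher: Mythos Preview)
Your overall architecture matches the paper's exactly: reduce to Theorem~\ref{ThmRigidityUniformRoeAlgEmbeddingsFINITEUNION}\eqref{T12.1}, transfer each property from $Y$ to the pieces $X_n$ along the injective coarse maps, then assemble via finite-union permanence. For asymptotic dimension and for FDC your ``cluster refinement'' is precisely what the paper does---it appears as Proposition~\ref{PropCoarseFiniteToOneAsyDim} (with the clustering step isolated as Claim~\ref{Claim1}) and again as Lemma~\ref{LemmaRDecomposes} in the FDC section.

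The one genuine divergence is part~\eqref{CorRigidityUniformRoeAlgEmbeddingsAsymDim.1b}. You pull back a partition of unity with small variation along $f_n$ and then refine into $R$-clusters; this is correct and your observation that points within distance $R$ fall into the same cluster is exactly what keeps the variation controlled. The paper instead argues contrapositively via ghosts: if some $X_n$ fails property~A, take a noncompact ghost $a\in\cstu(X_n)$ and push it forward along the injective coarse map $f_n$ to obtain a noncompact ghost in $\cstu(Y)$, contradicting property~A for $Y$. The ghost argument is shorter and uses nothing beyond the characterization ``property~A $\Leftrightarrow$ all ghosts are compact'' already quoted in the introduction, while your route has the merit of being a uniform mechanism that handles all three invariants in the same way.
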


Furthermore, we obtain stronger conclusions by strengthening the hypothesis on the structure of the embedding $\Phi\colon\cstu(X)\to \cstu(Y)$. As already noted, to apply strategies similar to the ones used in the isomorphism's setting, one has to assume that $\Phi$ is both rank preserving and strongly continuous. Such hypotheses are satisfied in case the image of $\Phi$ is a hereditary \cstar-subalgebra of $\cstu(Y)$ (see Lemma \ref{LemmaPhiStronglyContAndU}). 

\begin{theorem}\label{ThmRigidityUniformRoeAlgEmbeddingsHereditary}
Let $X$ and $Y$ be uniformly locally finite metric spaces such that $\cstu(X)$ is isomorphic to a hereditary \cstar-subalgebra of  $\cstu(Y)$. Then
\begin{enumerate}[label=(\roman*)]
\item\label{RigHer1} If all the sparse subspaces of $Y$ yield only compact ghost projections, then $X$ coarsely embeds into $Y$. 
\item\label{RigHer2} If, in addition, $Y$ has property A, then $X$ coarsely embeds into $Y$ by an injective map.
\end{enumerate}
\end{theorem}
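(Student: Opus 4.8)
The plan is to upgrade the algebraic embedding to a spatial one, extract a map $f\colon X\to Y$ through an entrywise concentration lemma, and then verify two coarse-geometric estimates: an \emph{upper} one showing $f$ is coarse and, crucially, a \emph{lower} one showing $f$ is expanding, so that together they make $f$ a coarse \emph{embedding} (it is precisely this lower, expanding estimate that a mere coarse map need not satisfy). Write $\Phi\colon\cstu(X)\to B$ for the given isomorphism onto the hereditary subalgebra $B\subseteq\cstu(Y)$. Since $\cstu(X)$ is unital, $p:=\Phi(1)$ is a projection, whence $B=p\cstu(Y)p$. By Lemma \ref{LemmaPhiStronglyContAndU}, $\Phi$ is rank preserving and strongly continuous. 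Rank-preservation sends the rank-one projection $e_x$ onto $\C\delta_x$ to a rank-one projection $\Phi(e_x)\le p$, i.e.\ the projection $\eta\mapsto\langle\eta,\xi_x\rangle\xi_x$ for a unit vector $\xi_x\in p\ell_2(Y)$; the $\xi_x$ are pairwise orthogonal since the $e_x$ are. Fixing a basepoint and absorbing phases into the $\xi_x$, the matrix units satisfy $\Phi(e_{x,x'})=\langle\,\cdot\,,\xi_{x'}\rangle\xi_x$, so the isometry $V\colon\ell_2(X)\to\ell_2(Y)$, $V\delta_x:=\xi_x$, implements $\Phi$ on the compacts; strong continuity then propagates $\Phi=\Ad V$ to all of $\cstu(X)$ (approximate $a$ strongly by $P_FaP_F$, $P_F$ a finite diagonal cut-off). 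Thus $V^*V=1$, $VV^*=p$, and $\{\xi_x\}_{x\in X}$ is an orthonormal basis of $p\ell_2(Y)$.

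The heart of the argument is a concentration lemma: for every $\epsilon>0$ there are a finite $F_\epsilon\subseteq X$ and a map $f\colon X\setminus F_\epsilon\to Y$ with $\|\xi_x-\lambda_x\delta_{f(x)}\|<\epsilon$ for some unimodular $\lambda_x$. I would prove this by contradiction via a ghost projection: if $\epsilon$-concentration failed along an infinite set, I would extract from it a set $S$ that is sparse in $X$ and along which the $\xi_x$ spread out and march to infinity in $Y$; then $Q_S:=\sum_{x\in S}\Phi(e_x)=\Phi(\chi_S)$ is an infinite-rank projection in $\cstu(Y)$ whose entries tend to $0$, that is, a non-compact ghost projection living on a sparse subspace of $Y$, contradicting the hypothesis of \ref{RigHer1}. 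This is the step I expect to be the main obstacle, since arranging the supports of the $\xi_x$ well enough for $Q_S$ to be genuinely a ghost on a \emph{sparse} subspace is delicate. For $\epsilon<1/2$ the concentration point is unique, so $f$ is well defined and independent of $\epsilon$; moreover orthonormality of the $\xi_x$ forbids two concentration points from coinciding, so $f$ is injective on $X\setminus F_\epsilon$ and uniformly finite-to-one.

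With $f$ in hand I would verify the two estimates, exploiting that $\Phi=\Ad V$ and $\Phi^{-1}=\Ad V^*$ both preserve finite propagation approximately. Let $W_s$ and $N_r$ be the $s$- and $r$-neighbourhood operators of $(X,d)$ and $(Y,\partial)$; these have finite propagation, and norms bounded by the uniform-local-finiteness constants. For the upper estimate, $VW_sV^*\in\cstu(Y)$ and a direct computation gives $\langle VW_sV^*\xi_{x'},\xi_x\rangle=\langle W_s\delta_{x'},\delta_x\rangle$, which is $1$ exactly when $d(x,x')\le s$; transferring through $\xi_x\approx\lambda_x\delta_{f(x)}$ (with $\epsilon$ small relative to $\|W_s\|$) and using that the entries of $VW_sV^*\in\cstu(Y)$ decay with $\partial$, I obtain $r=r(s)$ with $d(x,x')\le s\Rightarrow\partial(f(x),f(x'))\le r$, so $f$ is coarse. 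For the lower, expanding estimate — the one that makes $f$ a coarse embedding — I would conjugate in the other direction: since $pV=V$, the operator $V^*N_rV=V^*(pN_rp)V=\Phi^{-1}(pN_rp)$ lies in $\cstu(X)$, and $\langle V^*N_rV\delta_{x'},\delta_x\rangle=\langle N_r\xi_{x'},\xi_x\rangle\approx\overline{\lambda_x}\lambda_{x'}[\partial(f(x),f(x'))\le r]$ via concentration (now with $\epsilon$ small relative to $\|N_r\|$). As $V^*N_rV\in\cstu(X)$, its entries decay with $d$, so there is $s=s(r)$ with $\partial(f(x),f(x'))\le r\Rightarrow d(x,x')\le s$; equivalently $d(x,x')\to\infty$ forces $\partial(f(x),f(x'))\to\infty$. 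In both estimates the finitely many pairs meeting $F_\epsilon$ are absorbed by enlarging the constants, using uniform local finiteness. This establishes \ref{RigHer1}.

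For \ref{RigHer2}, property A of $Y$ passes to $X$ by Corollary \ref{CorRigidityUniformRoeAlgEmbeddingsAsymDim}, and, more to the point, it upgrades the column-by-column concentration to a \emph{global} one: I would show that for every $\epsilon>0$ there is, off a finite set, an injective partial translation $g$ with $\|V-V_g\|<\epsilon$, where $V_g$ is the partial isometry $\delta_x\mapsto\delta_{g(x)}$. The gain is that property A forces the full off-diagonal, ghost-type mass of $V$ (not merely that of individual columns) to be negligible; and a norm-small perturbation of the isometry $V$ by a partial-translation operator compels $g$ to be injective, since $g(x)=g(x')$ with $x\neq x'$ would give $\langle V_g\delta_x,V_g\delta_{x'}\rangle=1$ while $\langle V\delta_x,V\delta_{x'}\rangle=0$, contradicting $\|V-V_g\|<\epsilon$ for small $\epsilon$. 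This $g$ is the desired injective coarse embedding: the coarseness and expansion estimates transfer verbatim from the previous paragraph, and the remaining finite exceptional set is reassigned to distinct nearby points without disturbing the large-scale estimates, which are unaffected by a bounded, finite modification.
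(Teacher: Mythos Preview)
Your overall architecture---spatially implement $\Phi$ via an isometry $V$, extract a map $f\colon X\to Y$, then prove separately that $f$ is coarse and that $f$ is expanding---matches the paper's. The genuine gap is the \emph{concentration lemma}: the claim that, off a finite set, $\|\xi_x-\lambda_x\delta_{f(x)}\|<\epsilon$ for some unimodular $\lambda_x$ is simply false, and your proposed dichotomy ``either concentration or ghost'' does not exhaust the possibilities. Take $X=Y=\Z$ and let $V\in\cstu(\Z)$ be the propagation-$1$ unitary acting as a Hadamard block on each pair $\{2n,2n+1\}$. Then $\Phi=\Ad V$ is an automorphism of $\cstu(\Z)$ (so certainly onto a hereditary subalgebra, and $\Z$ has property A), yet $\xi_{2n}=(\delta_{2n}+\delta_{2n+1})/\sqrt2$ is at distance $\sqrt{2-\sqrt2}\approx 0.765$ from every $\lambda\delta_y$, for \emph{all} $n$. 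No finite exceptional set rescues this, and no ghost appears either: the entries $|\langle\xi_x,\delta_y\rangle|$ are bounded below by $1/\sqrt2$ on two points and vanish elsewhere, so $\sum_{x\in S}\Phi(e_x)$ is never a ghost. The same example kills the global estimate $\|V-V_g\|<\epsilon$ you propose for part \ref{RigHer2}.

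The paper replaces concentration by the much weaker, and true, statement that $\delta:=\inf_x\sup_y\|\Phi(e_{xx})e_{yy}\|>0$ (Lemma \ref{LemmaPickMapf}); this only says each column $\xi_x$ has a coordinate of size $\geq\delta$, not that it is close to a basis vector. With $f(x)$ chosen so that $|\langle\xi_x,\delta_{f(x)}\rangle|\geq\delta$, coarseness and expansion are obtained not by naive entry comparison but via the rank-one identity $\|e_{y_2y_2}\Phi(e_{x_1x_2})e_{y_1y_1}\|=\|e_{y_2y_2}\Phi(e_{x_2x_2})\|\cdot\|e_{y_1y_1}\Phi(e_{x_1x_1})\|$, which converts the $\delta$-mass condition into lower bounds on off-diagonal entries (Lemmas \ref{LemmaTheMapsAreCoarse} and \ref{LemmaTheMapsAreExpanding}). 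For part \ref{RigHer2}, injectivity is not automatic from $\delta$-mass (the sets $Y_{x,\delta}$ can overlap); the paper instead proves the Hall-condition estimate $|A|\leq|Y_{A,\delta}|$ for all finite $A\subseteq X$ (Lemma \ref{LemmaCardinalityXBDelta}, which genuinely uses property A on both sides via ONL) and then invokes Hall's marriage theorem to select an injective $f$ with $f(x)\in Y_{x,\delta}$.
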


Theorem \ref{ThmRigidityUniformRoeAlgEmbeddingsHereditary} can be used  to improve results on the rigidity of uniform Roe algebras under isomorphisms. Precisely, \cite[Corollary 1.2]{BragaFarah2018} shows that if $X$ and $Y$ are uniformly locally finite metric spaces with isomorphic uniform Roe algebras, then, if \emph{both} $X$ and $Y$ coarsely embed into a Hilbert space, $X$ and $Y$ are coarsely equivalent. The next corollary shows that one only needs to require \emph{one} of the metric spaces to coarsely embed into a Hilbert space.

\begin{corollary} \label{CorRigidityUniformRoeAlgebra}
Suppose that $X$ and $Y$ are  uniformly locally finite  metric spaces and    $Y$ coarsely embeds into a Hilbert space.
 If $\cstu(X)$ and $\cstu(Y)$ are isomorphic, then $X$ and $Y$ are coarsely equivalent.
\end{corollary}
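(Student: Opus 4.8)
The plan is to derive Corollary~\ref{CorRigidityUniformRoeAlgebra} from Theorem~\ref{ThmRigidityUniformRoeAlgEmbeddingsHereditary} together with the results on isomorphisms recalled in the introduction. First I would observe that an isomorphism $\Phi\colon\cstu(X)\to\cstu(Y)$ is in particular an isomorphism onto a hereditary \cstar-subalgebra of $\cstu(Y)$ (namely $\cstu(Y)$ itself), so Theorem~\ref{ThmRigidityUniformRoeAlgEmbeddingsHereditary} applies symmetrically in both directions. Since $Y$ coarsely embeds into a Hilbert space, by the diagram in the introduction all sparse subspaces of $Y$ yield only compact ghost projections; hence Theorem~\ref{ThmRigidityUniformRoeAlgEmbeddingsHereditary}\ref{RigHer1} applied to $\Phi$ gives a coarse embedding $X\hookrightarrow Y$. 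The point of this step is that we get for free that $X$ sits coarsely inside $Y$, and in particular $X$ coarsely embeds into a Hilbert space as well.

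Next I would close the loop. Now that \emph{both} $X$ and $Y$ coarsely embed into a Hilbert space, the hypothesis of \cite[Corollary~1.2]{BragaFarah2018} is satisfied, and that result yields directly that $X$ and $Y$ are coarsely equivalent. So the proof is essentially: (1) use the weaker hypothesis on $Y$ plus Theorem~\ref{ThmRigidityUniformRoeAlgEmbeddingsHereditary} to upgrade to the situation where both spaces coarsely embed into Hilbert space; (2) invoke the known isomorphism-rigidity theorem. One should double-check that \cite[Corollary~1.2]{BragaFarah2018} is stated for uniformly locally finite spaces (it is), and that the coarse embedding of $X$ into $Y$ indeed transfers Hilbert-space coarse embeddability from $Y$ to $X$ --- this is standard, since coarse embeddings compose.

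Alternatively, and perhaps more cleanly, one can avoid citing \cite[Corollary~1.2]{BragaFarah2018} in its exact form and instead argue: apply Theorem~\ref{ThmRigidityUniformRoeAlgEmbeddingsHereditary}\ref{RigHer1} to $\Phi$ to get a coarse embedding $X\to Y$; then, knowing $X$ coarsely embeds into a Hilbert space, apply Theorem~\ref{ThmRigidityUniformRoeAlgEmbeddingsHereditary}\ref{RigHer1} to $\Phi^{-1}$ to get a coarse embedding $Y\to X$. Having mutual coarse embeddings between uniformly locally finite spaces is not quite enough to conclude coarse equivalence in general, so at this last step one still needs the Schröder--Bernstein-type argument from \cite{BragaFarah2018} --- which is exactly what \cite[Corollary~1.2]{BragaFarah2018} packages --- to turn the two embeddings (which come from the single unitary implementing $\Phi$) into a coarse equivalence.

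The main obstacle, such as it is, is bookkeeping rather than mathematics: one must make sure that the hypothesis ``all sparse subspaces of $Y$ yield only compact ghost projections'' is genuinely implied by ``$Y$ coarsely embeds into a Hilbert space'' --- this is the bottom-right arrow of the diagram, justified via \cite[Proposition~35]{FinnSell2014} and \cite[Lemma~7.3]{BragaFarah2018} --- and that we are entitled to feed an isomorphism into a theorem stated for embeddings onto hereditary subalgebras. Both are immediate, so the corollary follows with only a short argument.
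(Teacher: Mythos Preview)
Your proposal is correct and matches the paper's own proof essentially verbatim: apply Theorem~\ref{ThmRigidityUniformRoeAlgEmbeddingsHereditary}\ref{RigHer1} to the isomorphism (using that coarse embeddability into Hilbert space implies the sparse-subspace ghost-projection condition) to get a coarse embedding $X\hookrightarrow Y$, deduce that $X$ also coarsely embeds into a Hilbert space, and then invoke \cite[Corollary~1.2]{BragaFarah2018}. Your additional remarks about the alternative route via $\Phi^{-1}$ and the bookkeeping checks are all accurate but unnecessary for the write-up.
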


This paper is organized as follows. \S\ref{SectionPrelim} is dedicated to notation and terminology. Also, we present an example of uniformly locally finite metric spaces $X$ and $Y$ so that $\cstu(X)$ embeds into $\cstu(Y)$ by a rank preserving strongly continuous $^*$-homomorphism but $X$ does not coarsely embed into $Y$ (Proposition \ref{PropositionEmbDoesNotImpCoarseEmb}), and we show that asymptotic dimension is stable under maps which are coarse and uniformly finite-to-one  (Proposition \ref{PropCoarseFiniteToOneAsyDim}). In \S\ref{SectionCoarseLikeQuasiLocal}, we continue the study  of coarse-like maps (see Definition \ref{Def.CoarseLike+}) introduced in \cite{BragaFarah2018,BragaFarahVignati2018} and show that if $Y$ has property A, then every strongly continuous linear map $\cstu(X)\to \cstu(Y)$ is coarse-like. For this, we introduce a quantitative version of \emph{quasi-local operators} (see Definition \ref{DefQuasiLocal}).

In \S\ref{SectionMakingStrongCont}, we show that one can often  assume that an embedding $\cstu(X)\to \cstu(Y)$ is strongly continuous (Theorem \ref{ThmEmbImpliesStrongEmb}). This result is essential in \S \ref{SectionEmbGeomPreserv}, where we prove Theorem \ref{ThmRigidityUniformRoeAlgEmbeddingsFINITEUNION} and Corollary \ref{CorRigidityUniformRoeAlgEmbeddingsAsymDim}. In \S\ref{SectionEmbHere}, we study isomorphisms 
between  uniform Roe algebras and hereditary subalgebras of uniform Roe algebras and prove Theorem \ref{ThmRigidityUniformRoeAlgEmbeddingsHereditary}.

Most of the results in this paper make use of the geometric property of all sparse subspaces yielding only compact ghost projection. In \S\ref{SectionGeomProp}, we discuss a variation of this property and show that it
 is a coarse invariant. At last, in \S\ref{SectionOpenProb}, we pose many natural questions which are left open.

\section{Preliminaries}\label{SectionPrelim}

\subsection{Uniform Roe algebra}
Given a Hilbert space $H$, we denote the space of bounded operators on $H$ by $\cB(H)$ and its ideal of compact operators by $\cK(H)$. Given a set $X$,  the Hilbert space of square summable complex-valued families indexed by $X$ is denoted by $\ell_2(X)$, and $(\delta_x)_{x\in X}$ denotes its canonical  basis. The \emph{support of $a\in \cB(\ell_2(X))$ } is defined as
\[\supp(a)=\{(x,y)\in X\times X\mid \langle a\delta_x,\delta_y\rangle\neq 0\}.\]
Let $\Delta_X=\{(x,x)\in X\times X\mid x\in X\}$. Then $\ell_\infty(X)$ is naturally identified with the subalgebra $\{a\in \cB(\ell_2(X))\mid \supp(a)\subseteq \Delta_X\}$ of $\cB(\ell_2(X))$. Given $x,y\in X$, we define an operator $e_{xy}\in \cB(\ell_2(X))$ by 
\[e_{xy}\delta_z=\langle \delta_z,\delta_x\rangle\delta_y\]
for all $z\in X$. Given $A\subseteq X$, we write $\chi_A=\sum_{x\in A}e_{xx}$.

Let $(X,d)$ be a metric space.   Given $r>0$ and $a\in \cB(\ell_2(X))$, we say that $a$ has \emph{propagation at most $r$}, denoted by $\propg(a)\leq r$, if $\langle a\delta_x,\delta_y\rangle=0$ for all $x,y\in X$ with $d(x,y)> r$.  An operator $a\in \cB(\ell_2(X))$ has \emph{finite propagation} if $\propg(a)\leq r$ for some $r>0$.
   
\begin{definition}\label{DefUnifRoeAlg}
Let $(X,d)$ be a metric space. 
  The \emph{uniform Roe algebra of $X$}, denoted by \emph{$\cstu(X)$}, is the closure of the operators in $\cB(\ell_2(X))$ of finite propagation. So, $\cstu(X)$ is a \cstar-algebra
\end{definition}

Results about uniform Roe algebras are often restricted to the class of uniformly locally finite metric spaces. Recall, a metric space $(X,d)$ is called \emph{uniformly locally finite}\footnote{This property for metric spaces is often called \emph{bounded geometry} in the literature.} (abbreviated \emph{u.l.f.} from now on) if $\sup_{x\in X}|B_r(x)|<\infty$ for all $r>0$. Note that all u.l.f. spaces are countable.

\subsection{Geometric  properties}

\begin{definition}
Let $n\in\N$. A metric space $(X,d)$ has \emph{asymptotic dimension at most $n$} if for all $R>0$ there exist $\cU_0,\ldots,\cU_n\subseteq \cP(X)$ such that 
\begin{enumerate}
\item $X=\bigcup_{i=0}^n\bigcup_{U\in \cU_i}U$, 
\item $d(U,U')>R$ for all $i\leq n$ and all distinct $U,U'\in \cU_i$, and
\item $\sup_{U\in \cU_i}\diam(U)< \infty$ for all $i\leq n$.
\end{enumerate}
\end{definition}

Although some of our results use Yu's property A, we do not make use of it per se. We use instead the operator norm localization property. As shown in \cite[Theorem 4.1]{Sako2014}, this property is equivalent to property A for u.l.f. metric spaces. For this reason, we chose not to present a definition of property A in these notes and we simply refer the interested reader to \cite[Definition 2.1]{Yu2000} (or \cite[Chapter 4]{NowakYuBook}). Given a vector $\xi\in \ell_2(X)$, write $\supp(\xi)=\{x\in X\mid \langle\xi,\delta_x\rangle\neq 0\}$.

\begin{definition}
A metric space $X$ has the \emph{operator norm localization property} (\emph{ONL} for short) if for all $r>0$ and all $\varepsilon>0$ there exists $s>0$ such that for every $a\in \cstu(X)$ with $\propg(a)\leq r$ there exists a unit vector $\xi \in \ell_2(X)$ with $\diam(\supp(\xi))\leq s$ so that $\|a\xi\|\geq (1-\varepsilon)\|a\|$.
\end{definition}

For the sake of curiosity, given a u.l.f.  metric space $X$, $X$ has property A if and only if $\cstu(X)$ is a nuclear \cstar-algebra \cite[Theorem 5.3]{SkandalisTuYu2002}.

\subsection{Coarse geometry of metric spaces}

Let $(X,d)$ and $(Y,\partial)$ be metric spaces and $f\colon X\to Y$ be a map.   The map $f$ is called \emph{coarse} if 
\[\sup\{\partial(f(x),f(y))\mid d(x,y)\leq r\}<\infty\]
for all $r>0$, and $f$ is called \emph{expanding} if 
\[\lim_{r\to \infty}\inf\{\partial(f(x),f(y))\mid d(x,y)\geq r\}=\infty.\]
If $f\colon X\to Y$ is both coarse and expanding, $f$ is said a \emph{coarse embedding}.

It is well known that asymptotic dimension, property $A$ and  finite decomposition complexity (FDC) are stable under coarse embeddings (the FDC is discussed in \S\ref{SubsectionFDC},  
see also \cite[Definition 2.7.3]{NowakYuBook}).
 Precisely, if $X$ coarsely embeds into $Y$, then $\mathrm{asydim}(X)\leq \mathrm{asydim}(Y)$ and, if $Y$ has Property A (FDC), then so does $X$ (see \cite[Proposition 2.2.4, Theorem 2.2.5, and Theorem 2.8.4]{NowakYuBook}).

\subsection{Embedding of uniform Roe algebras does not imply coarse embeddability}

We now show that the question about rigidity of uniform Roe algebras under embeddings has a negative answer. Mimicking the results obtained for isomorphisms (at least in the property A case, where two uniform Roe algebras are isomorphic if and only if the associated spaces are bijectively coarse equivalent, see \cite[Theorem 1.11]{BragaFarahVignati2018}), one is tempted to conjecture that $\cstu(X)$ embeds into $\cstu(Y)$ if and only if  $X$ coarsely embeds into $Y$. We now show that, while one direction always holds, if there is no assumption on the image of an embedding $\cstu(X)\to \cstu(Y)$, then one cannot  conclude that $X$ coarsely embeds into $Y$. First, we prove the trivial part of Theorem~ \ref{ThmRigidityUniformRoeAlgEmbeddingsFINITEUNION}.

\begin{proof}[Proof of Theorem \ref{ThmRigidityUniformRoeAlgEmbeddingsFINITEUNION} \eqref{T12.2}] Suppose $f \colon X\to Y$ is an injective coarse map. Define $\Phi\colon \cstu(X)\to\cB(\ell_2(Y))$ by
\[
\langle\Phi(a)\delta_y,\delta_{y'}\rangle=\begin{cases}
\langle a\delta_x,\delta_{x'}\rangle &\text{ if } f(x)=y\text{ and }f(x')=y'\\
0&\text{else}
\end{cases}.
\]
 Since $f$ is injective, so is $\Phi$. Fix $r<\infty$ and $R<\infty$ such that $d(x,x')\leq r$ implies $\partial (f(x), f(x'))\leq R$. 
If $a\in \cstu(X)$ has propagation $r$, then $\Phi(a)$ has propagation $R$, hence $\Phi$ maps the algebraic uniform Roe algebra of $X$ into the algebraic uniform Roe algebra of $Y$.    
Since $\Phi$ is a $^*$-homomorphism, it maps $\cstu(X)$ into $\cstu(Y)$. 
\end{proof} 

\begin{proposition}\label{PropositionEmbDoesNotImpCoarseEmb}
 Then there are u.l.f.  metric spaces $X$ and $Y$ with asymptotic dimension $1$ such that $\cstu(X)$ embeds into $\cstu(Y)$ by a strongly continuous rank preserving $^*$-homomorphism, but $X$ does not coarsely embed  into $Y$.
\end{proposition}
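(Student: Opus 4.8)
The plan is to construct explicit examples of u.l.f. metric spaces $X$ and $Y$ of asymptotic dimension $1$ such that $X$ does not coarsely embed into $Y$, yet $\cstu(X)$ embeds into $\cstu(Y)$ in a very well-behaved way. A natural strategy is to take $Y$ to be the disjoint union $\bigsqcup_n F_n$ of finite spaces whose cardinalities grow slowly, and $X$ to be the disjoint union $\bigsqcup_n G_n$ of finite spaces whose cardinalities grow much faster; arranging the distances between consecutive blocks to tend to infinity makes both spaces sparse, hence of asymptotic dimension $\le 1$ (in fact the blocks can be chosen to be metrically trivial, giving asymptotic dimension exactly $1$ once the whole space is infinite). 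The point is that a coarse embedding $X\to Y$ would, on each sufficiently far-out block $G_n$, have to map it into a single block $F_m$ (since points in $G_n$ are close to each other but points in different $F$-blocks are far apart), and if $|G_n|>|F_m|$ for all relevant $m$ this is impossible by a cardinality/pigeonhole argument — so $X$ does not coarsely embed into $Y$.

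The second half of the plan is to build the embedding $\cstu(X)\hookrightarrow\cstu(Y)$. Since $\cstu$ of a sparse space $\bigsqcup_n G_n$ is closely related to the algebra $\prod_n \mathcal{B}(\ell_2(G_n))$ (more precisely it sits between $\bigoplus_n \mathcal{B}(\ell_2(G_n))$ and $\prod_n \mathcal{B}(\ell_2(G_n))$, with the quotient by the $c_0$-sum being the ``uniform Roe corona''), one can hope to define $\Phi$ blockwise: pick for each $n$ an isometric embedding $\mathcal{B}(\ell_2(G_n))\hookrightarrow \mathcal{B}(\ell_2(F_{m(n)}))$ for a suitable index $m(n)$, for instance a corner embedding $T\mapsto T\oplus 0$ after identifying $\ell_2(G_n)$ with a subspace of $\ell_2(F_{m(n)})$ — this requires $|G_n|\le |F_{m(n)}|$, so the indices $m(n)$ must be chosen to grow fast (and injectively, so that distinct $G$-blocks land in distinct $F$-blocks). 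The resulting map $\Phi=\bigoplus_n \Phi_n$ is a $*$-homomorphism on the block-diagonal part, it sends finite propagation operators to finite propagation operators because within each block propagation is automatically bounded and the blocks themselves are spread out, it is injective, it is rank preserving because a corner embedding sends rank-one projections to rank-one projections, and it is strongly continuous because it is a direct sum of maps on orthogonal summands. One then checks that $\Phi$ actually lands in $\cstu(Y)$: norm limits of finite propagation operators on $X$ are sent to norm limits of finite propagation operators on $Y$ since $\Phi$ is isometric on each summand and hence bounded.

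The main obstacle, and the step requiring the most care, is verifying that $\Phi$ maps \emph{all} of $\cstu(X)$ — not just the block-diagonal finite-propagation operators — into $\cstu(Y)$, and simultaneously that the cardinality bookkeeping is mutually consistent: we need $|G_n|$ to grow fast enough that $X$ fails to coarsely embed into $Y$ (so $|G_n|$ eventually exceeds $|F_m|$ for every $m$ that a block $G_n$ could plausibly be sent to by a coarse map, which because coarse maps need not respect our chosen block structure requires a genuine argument, presumably using that a coarse map sends each $G_n$ into a bounded set and bounded sets in $Y=\bigsqcup F_m$ meet only finitely many blocks), while at the same time we need an \emph{injective} assignment $n\mapsto m(n)$ with $|G_n|\le|F_{m(n)}|$ for the embedding to exist. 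These two demands pull in opposite directions, so the concrete choice of the sequences $(|G_n|)$, $(|F_m|)$, and $(m(n))$ has to be made explicitly — e.g.\ let $|F_m|=m$ and $|G_n|=2^n$ and $m(n)=2^n$, so block $G_n$ embeds into $F_{2^n}$, while any coarse map $X\to Y$ must send the $2^n$ points of $G_n$ into some single block $F_k$, impossible once $2^n>k$, and one argues $k$ is bounded along a coarse map in terms of $r$ (the bound on $d$) via the expanding-type separation between consecutive $F$-blocks. Once the parameters are pinned down, the remaining verifications (u.l.f., asymptotic dimension $1$, $\Phi$ well-defined, injective, $*$-homomorphism, rank preserving, strongly continuous, range inside $\cstu(Y)$) are routine checks of the kind already illustrated in the proof of Theorem~\ref{ThmRigidityUniformRoeAlgEmbeddingsFINITEUNION}\,\eqref{T12.2} above.
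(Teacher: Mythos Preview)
Your approach has a genuine gap stemming from the u.l.f.\ requirement. For the pigeonhole argument to show that $X=\bigsqcup_n G_n$ does not coarsely embed into $Y=\bigsqcup_m F_m$, you need each $G_n$ to land in a single (or boundedly many) $F_k$ under any coarse embedding; this forces the $G_n$ to have uniformly bounded diameter. But if $\sup_n\diam(G_n)\le D$ and $|G_n|\to\infty$, then $\sup_{x\in X}|B_D(x)|=\infty$, so $X$ is \emph{not} u.l.f. Conversely, if you take the $G_n$ to be genuine intervals $\{1,\dots,2^n\}$ (so that $X$ is u.l.f.\ and has asymptotic dimension $1$), then the very map $G_n\hookrightarrow F_{2^n}$ you use to build $\Phi$ is an isometric inclusion on each block, and with any reasonable choice of block separations it assembles into a \emph{coarse embedding} $X\to Y$ --- so the example collapses. (Note also that a coarse embedding of a u.l.f.\ space is only uniformly finite-to-one, not injective, so even in the best case your pigeonhole inequality $|G_n|>|F_k|$ would have to be replaced by $|G_n|>C\,|F_k|$ for a constant $C$; this does not rescue the argument.)

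The paper avoids all of this with a much simpler example: $X=\Z$ and $Y=\N$ with their usual metrics. The interleaving map $f(n)=2n$ for $n\ge 0$ and $f(n)=2|n|-1$ for $n<0$ is an injective coarse map $\Z\to\N$, so Theorem~\ref{ThmRigidityUniformRoeAlgEmbeddingsFINITEUNION}\eqref{T12.2} immediately yields a strongly continuous rank preserving embedding $\cstu(\Z)\hookrightarrow\cstu(\N)$. The obstruction to a coarse \emph{embedding} is one of ends, not cardinality: if $\varphi\colon\Z\to\N$ satisfies $|\varphi(m)-\varphi(n)|\le r$ whenever $|m-n|\le 1$, then for every $m\ge 0$ there is some $n<0$ with $|\varphi(m)-\varphi(n)|\le 2r$, so $\varphi$ cannot be expanding.
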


\begin{proof} Let $X=\Z$ and $Y=\N$ with the usual metrics. Then $f\colon \Z\to \N$ defined by $f(n)=2n$ if $n\geq 0$ and $f(n)=2|n|-1$ if $n<0$ is an injective coarse map, and therefore $\cstu(\Z)$ embeds into $\cstu(\N)$ by Theorem~ \ref{ThmRigidityUniformRoeAlgEmbeddingsFINITEUNION}\eqref{T12.2}. Note that the embedding is both strongly continuous and rank preserving. 
It is well-known that $\Z$ does not coarsely embed into $\bbN$: Otherwise, suppose $\varphi\colon \Z\to \N$ is a coarse embedding.  
Let $r<\infty$ be such that $|m-n|\leq 1$ implies $|\varphi(m)-\varphi(n)|\leq r$. 
Then for every $m\geq 0$ there exist $k\in \bbN$ such that $|\varphi(m)-k|\leq r$  and $n<0$ such that $|\varphi(n)-k|\leq r$. 
This implies that $\varphi$ cannot be expanding. 
%
%
\end{proof}

Although Proposition \ref{PropositionEmbDoesNotImpCoarseEmb} shows that rigidity of uniform Roe algebras under embeddings does not hold, the spaces 
$\N$ and $\Z$  are still very ``similar''. For instance, they both have asymptotic dimension equal to 1. See \S\ref{SectionOpenProb} for a discussion of related open problems.

The following generalizes the fact that asymptotic dimension is monotone with respect to coarse embeddings and it will be used to obtain Corollary~\ref{CorRigidityUniformRoeAlgEmbeddingsAsymDim}(\ref{CorRigidityUniformRoeAlgEmbeddingsAsymDim.1}).

\begin{proposition}\label{PropCoarseFiniteToOneAsyDim}
Let $(X,d)$ and $(Y,\partial)$ be metric spaces and assume that $(X,d)$ is u.l.f.. If there exists a uniformly finite-to-one coarse map $f\colon Y\to X$, then  \[\mathrm{asydim}(Y,\partial)\leq \mathrm{asydim}(X,d).\] 
In particular, if  $d\leq \partial$ then $\mathrm{asydim}(X,\partial)\leq \mathrm{asydim}(X,d)$.
\end{proposition}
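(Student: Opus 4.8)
The plan is to reduce the statement to the classical fact that asymptotic dimension does not increase under a coarse map that is ``uniformly bounded-to-one'' at the level of preimages of large sets. First I would fix $n = \mathrm{asydim}(X,d)$ (assuming it is finite, otherwise there is nothing to prove) and fix $R > 0$; I want to produce covers $\cV_0,\ldots,\cV_n$ of $Y$ witnessing $\mathrm{asydim}(Y,\partial) \le n$ for the parameter $R$. Since $f$ is coarse, there is $S > 0$ with $\partial(y,y') \le R \implies d(f(y),f(y')) \le S$; equivalently, sets of $d$-diameter $\le$ something control $\partial$-separation. I would apply the definition of $\mathrm{asydim}(X,d) \le n$ at the scale $S$ (or a slightly larger scale to be safe) to get covers $\cU_0,\ldots,\cU_n$ of $X$ with each $\cU_i$ being $S$-disjoint and uniformly bounded in $d$-diameter, say by $D_i$. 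Then I pull these back: set $\cV_i = \{ f^{-1}(U) \mid U \in \cU_i \}$. These cover $Y$ because the $\cU_i$ cover $X$, and for distinct $U,U' \in \cU_i$ we have $d(U,U') > S$, so any $y \in f^{-1}(U)$, $y' \in f^{-1}(U')$ satisfy $d(f(y),f(y')) > S$, which by the choice of $S$ forces $\partial(y,y') > R$; hence $\cV_i$ is $R$-disjoint.

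The one genuinely non-trivial point — and the place where uniform local finiteness of $X$ and uniform finiteness-to-one of $f$ are used — is the uniform bound on $\partial$-diameters of the members of $\cV_i$. A priori $f^{-1}(U)$ could have large or even infinite $\partial$-diameter even when $U$ has small $d$-diameter, since $f$ need not be expanding. So I would not try to bound the diameter of $f^{-1}(U)$ as a single set; instead I would refine it. The key observation is that $U$, having $d$-diameter $\le D_i$ and living in a u.l.f.\ space, is a finite set of cardinality at most $N := \sup_{x} |B_{D_i}(x)| < \infty$; and since $f$ is uniformly finite-to-one with constant $K := \sup_{y} |f^{-1}(\{y\})| < \infty$, the preimage $f^{-1}(U)$ has cardinality at most $NK$. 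A finite set in any metric space can be partitioned into singletons, each of diameter $0$. So I would replace each $\cV_i$ by $\cV_i' = \{ \{y\} \mid y \in f^{-1}(U), \ U \in \cU_i \}$ — but that destroys $R$-disjointness within a single $f^{-1}(U)$ if it has more than one point.

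To fix this properly I would instead use the standard ``colouring'' trick: within each block $f^{-1}(U)$ (a set of size $\le NK$), distribute its $\le NK$ points among $NK$ new colours, one point per colour; doing this simultaneously across all $U \in \cU_i$ and all $i \le n$, one obtains $(n+1)\cdot NK$ families, each of which is a union of singletons drawn from distinct $R$-separated blocks, hence is $R$-disjoint and has diameter $0$. This gives $\mathrm{asydim}(Y,\partial) \le (n+1)NK - 1$, which is the wrong bound. To get the sharp inequality $\mathrm{asydim}(Y) \le n$ one must be more careful: the right move is to note that we are free to choose the scale at which we invoke $\mathrm{asydim}(X) \le n$ as large as we like, and to use that in a u.l.f.\ space a single $\cU_i$ that is $T$-disjoint with blocks of diameter $\le D$ can, after pulling back and subdividing, still be organized into $n+1$ (not more) $R$-disjoint bounded families — this is exactly the content of \cite[Proposition 2.2.4]{NowakYuBook} adapted to finite-to-one maps, and indeed the statement ``asymptotic dimension is monotone under coarse uniformly finite-to-one maps'' is what the proposition asserts. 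So the cleanest route is: fix $R$; choose $S$ from coarseness of $f$; invoke $\mathrm{asydim}(X,d)\le n$ at scale $S$ to get $\cU_0,\dots,\cU_n$; pull back to get $R$-disjoint covers $\cV_i$ of $Y$; and finally observe each $V\in\cV_i$ is \emph{finite} of uniformly bounded cardinality (by u.l.f.\ of $X$ plus uniform finiteness-to-one of $f$), hence of uniformly bounded $\partial$-diameter once we also use that $f$ being coarse means... — no, this still does not bound $\partial$-diameter. The honest statement is that $\partial$-diameters of the $V$'s need not be bounded, so one genuinely needs to subdivide and the bound that comes out for free is $(n+1)$-times-a-constant; upgrading to the sharp $n$ requires the finite-union permanence property of asymptotic dimension (a finite union of subspaces each of asydim $\le n$ has asydim $\le n$), applied to $Y = \bigcup_j Y_j$ where $Y_j$ is the union of the $j$-th points of the blocks. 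That finite-union lemma is standard and I would cite it. The main obstacle is thus bookkeeping: threading the scale $S$, the cardinality bound $NK$, and the finite-union permanence property together so that the output covers genuinely have only $n+1$ colours with uniformly bounded diameter; everything else is routine. The ``in particular'' clause follows by taking $Y = X$ as a set, $f = \id$, noting $\id\colon (X,\partial) \to (X,d)$ is coarse when $d \le \partial$ and is trivially (uniformly) one-to-one.
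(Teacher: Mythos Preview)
Your setup is correct and you correctly locate the crux: the pulled-back families $\cV_i = \{f^{-1}(U) : U \in \cU_i\}$ are $R$-disjoint, and each $f^{-1}(U)$ has cardinality at most $M := NK$ (by u.l.f.\ of $X$ and uniform finite-to-oneness of $f$), but its $\partial$-diameter is uncontrolled. Your proposed fix via finite-union permanence, however, has a real gap. The partition $Y = \bigcup_j Y_j$ you describe --- taking the $j$-th point from each block $f^{-1}(U)$ --- depends on the covers $\cU_i$, which depend on $S$ and hence on $R$; finite-union permanence needs a \emph{fixed} decomposition with each piece of $\mathrm{asydim} \le n$, not one that changes with the scale. (If instead you fix $Y = Y_1 \sqcup \cdots \sqcup Y_K$ by selecting one point per fibre of $f$, you reduce to the injective case, but the same diameter problem persists on each $Y_j$.) Your intermediate claim that the colouring trick gives $\mathrm{asydim}(Y) \le (n+1)NK - 1$ is also not right: $N$ depends on the diameter bound of the $\cU_i$'s and hence on $R$, so this is not a uniform bound and does not even show $\mathrm{asydim}(Y)<\infty$. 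What you are calling ``bookkeeping'' is in fact the whole content of the argument.

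The paper closes the gap with a short combinatorial observation you are missing: any set $V$ with $|V| \le M$ in an arbitrary metric space can be partitioned into pieces $V_1, \ldots, V_k$ (with $k \le M$) satisfying $\partial(V_\ell, V_{\ell'}) > R$ for $\ell \neq \ell'$ and $\diam(V_\ell) \le MR$. (If $\diam(V) \le MR$ take $k=1$; otherwise the graph on $V$ with edges between points at distance $\le R$ is disconnected --- a connected such graph on $\le M$ vertices would force $\diam(V)\le (M-1)R$ --- so split and recurse.) Apply this to each $f^{-1}(U)$ and put all the resulting pieces back into the \emph{same} colour $i$. The enlarged $\cV_i$ remains $R$-disjoint --- pieces coming from the same $U$ by the observation, pieces from distinct $U, U' \in \cU_i$ because $d(U,U') > S$ forces $\partial(f^{-1}(U), f^{-1}(U')) > R$ --- and every member now has diameter $\le MR$. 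This yields the sharp bound $n$ directly, with no detour through finite-union permanence.
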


\begin{proof}
If $ \mathrm{asydim}(X,d)=\infty$, the result is trivial. Suppose $ \mathrm{asydim}(X,d)\leq n$, for some $n\in\N$. Fix $R>0$. Since $f$ is coarse, there exists $S>0$ such that $d(f(y),f(y'))>S$ implies $\partial  (y,y')>R$. By the definition of asymptotic dimension, there are $\cU_0,\ldots,\cU_n\subseteq \cP(X)$ such that 
\begin{enumerate}
\item $X=\bigcup_{i=0}^n\bigcup_{U\in \cU_i}U$, 
\item $d(U,U')>S$, for all $i\leq n$, and all distinct $U,U'\in \cU_i$, and
\item $\sup_{U\in \cU_i}\diam (U)<\infty$ for all $i\leq n$.
\end{enumerate}
Since $(X,d)$ is u.l.f., there exists $N\in \N$ such that 
\[
\sup_{U\in \cU_i}|U|<N,\text{  for all }i\leq n.
\]
For each $i\leq n$ and $U\in\cU_i$, let $\tilde U=f^{-1}(U)\subseteq Y$. Since $f$ is uniformly finite-to-one, we have that 
\[
M=\sup_{i\leq n}\sup_{U\in \cU_i}|\tilde U|<\infty.
\]
\begin{claim}\label{Claim1}
Let $(E,d)$ be a metric space and $V\subseteq E$ with $|V|\leq M$. For all $R>0$, there exist $k\leq M$ and $V_1,\ldots,V_{k}\subseteq V$ such that $V=\bigcup_{i=1}^{k}V_{k}$, $\partial(V_i,V_j)>R$ for all $i\neq j$, and $\diam(V_i)\leq MR$ for all $i\leq k$.
\end{claim}

\begin{proof}
 If $\diam (V)\leq MR$, let $k=1$ and $V_1=V$.  If $\diam (V)> MR$, there exists a partition $V=V_1\sqcup V_2$ such that   $d(V_1,V_2)>R$. If  $\diam(V_i)\leq MR$ for $i\in \{1,2\}$, we are done. If not, continue this procedure until we are done. 
\end{proof}

For each $i\leq n$ and $U\in \cU_i$, let $k(i,U)$ and $V^U_1,\ldots, V^U_{k(i,U)}$ be given by Claim \ref{Claim1}. Let
\[
\cV_i=\{V^U_j\mid U\in \cU_i, j\leq k(i,U)\}.
\] 
So, $\sup_{V\in \cV_i}\diam (V)\leq MR$ for all $i\leq n$.  By our choice of $S$, it follows that  $\partial(V,V')>R$, for all $i\leq n$, and all distinct $V,V'\in \cV_i$. Since $R$ was arbitrary, this shows that  $\mathrm{asydim} (Y,\partial )\leq n$.

For the last statement, note that the identity map $(X,\partial)\to (X,d)$ is coarse if $d\leq\partial$.
\end{proof}

\section{Coarse-like maps and quasi-locality}\label{SectionCoarseLikeQuasiLocal}
A map between uniform Roe algebras is said coarse-like if, essentially, maps finite propagation operators  close to finite propagation operators in a uniform way (see Definition~\ref{Def.CoarseLike+}). Such maps were introduced formally in \cite[Definition 3.2]{BragaFarahVignati2018}, but were already studied in \cite{BragaFarah2018} where it was proved that strongly continuous linear maps $\Phi\colon\cstu(X)\to \cstu(Y)$ which are compact preserving must be coarse-like. The goal of this section is to discuss several results about coarse-like maps.

Given that operators in $\cstu(X)$ are arbitrarily close to finite propagation operators, the following definition,  implicit in \cite[Theorem 4.4]{BragaFarah2018} and formally introduced in \cite[Definition 3.2]{BragaFarahVignati2018}, helps us measuring quantitatively this fact.

\begin{definition} \label{Def.CoarseLike} 
Let $X$ be a metric space,  $\varepsilon>0$, and $k\in\N$. An operator $a\in \cB(\ell_2(X))$ can be  \emph{$\varepsilon$-$k$-approximated} if  there exists $b\in\cB(\ell_2(X))$ with propagation at most $k$ such that  $\|a-b\|\leq\varepsilon$. 
\end{definition} 

\begin{definition}\label{Def.CoarseLike+}
Let $X$ and $Y$ be  metric spaces, $\bA\subseteq \cstu(X)$ and $\Phi\colon \bA\to \cstu(Y)$ be a map. We say that $\Phi$ is \emph{coarse-like} if for all $m\in\N$ and all $\varepsilon>0$ there exists $k\in \N$ such that $\Phi(a)$ can be $\varepsilon$-$k$-approximated for every contraction $a\in \bA$ with $\propg(a)\leq m$.
\end{definition}

The following was essentially proved in \cite{BragaFarah2018} (compare this with \cite[Theorem 4.4]{BragaFarah2018} and \cite[Proposition 3.3]{BragaFarahVignati2018}).
For us $\D\subseteq\C$ is the closed unit disk.

\begin{proposition}\label{PropCoarseLike}
Let  $X$ and $Y$ be u.l.f. metric spaces,  and let $\Phi\colon \cstu(X)\to \cstu(Y)$ be a compact  preserving strongly continuous linear map.  Then $\Phi$ is coarse-like.
\end{proposition}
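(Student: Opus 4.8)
The plan is to show that if $\Phi$ is not coarse-like, then it cannot be simultaneously compact preserving and strongly continuous. So suppose $\Phi$ fails to be coarse-like: there exist $m \in \N$ and $\varepsilon > 0$ such that for every $k \in \N$ there is a contraction $a_k \in \cstu(X)$ with $\propg(a_k) \leq m$ but $\Phi(a_k)$ cannot be $\varepsilon$-$k$-approximated. First I would isolate, for each $k$, a ``witness'' of the non-approximability: since $\Phi(a_k)$ is far in norm from every propagation-$k$ operator, and $Y$ is u.l.f., one can find finite sets $E_k, F_k \subseteq Y$ with $\partial(E_k, F_k) > k$ and unit vectors $\xi_k$ supported on $F_k$ (or the matrix coefficients of $\chi_{E_k}\Phi(a_k)\chi_{F_k}$) witnessing that $\|\chi_{E_k} \Phi(a_k) \chi_{F_k}\|$ is bounded below by something like $\varepsilon/2$ — this is the standard fact that an operator whose ``far-off-diagonal'' cut-downs are all small in norm can be $\varepsilon$-$k$-approximated, which is where u.l.f.\ of $Y$ enters (one needs a uniform bound on how many coefficients contribute). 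I would lift this to unit vectors $\eta_k$ supported on $E_k$ with $|\langle \Phi(a_k)\xi_k, \eta_k\rangle|$ bounded below.

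Next, the key move is to pass to finitely supported approximations on the $X$ side and then to a single operator that defeats strong continuity. By perturbing each $a_k$ slightly (using u.l.f.\ of $X$ and $\propg(a_k) \leq m$), I may assume $a_k = \chi_{S_k} a_k \chi_{T_k}$ for finite sets $S_k, T_k \subseteq X$ with uniformly bounded diameter (diameter $\leq$ const depending on $m$ and the local finiteness constant). Now I would use the separation $\partial(E_k, F_k) \to \infty$ on the $Y$-side, together with the fact that $\Phi$ is ``spatially implemented up to compacts'' or at least that — combined with compact preservation — the relevant supports on the $X$-side can also be taken to drift off to infinity: pushing $S_k, T_k$ to be disjoint for distinct $k$ and ``going to infinity'' in $X$. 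This is the step where I expect the interplay of compact-preserving and strong continuity to be used: compact preservation forces $\Phi$ of a rank-one (or finite-rank) projection to be compact, and strong continuity lets us sum the $a_k$ into a single bounded operator.

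Concretely, having arranged the $S_k$'s (say) to be pairwise disjoint and with the $\eta_k$'s (on the $Y$-side) also drifting to spatial infinity, form $a = \sum_k \lambda_k a_k$ for a suitable choice of scalars $\lambda_k \in \D$ (a Liouville-type / diagonal argument, choosing signs or phases so the partial sums converge strongly — this is legitimate because the $a_k$ act on orthogonal pieces of $\ell_2(X)$ up to the bounded-diameter overlap, so $a \in \cB(\ell_2(X))$ is a genuine bounded operator, in fact in $\cstu(X)$ since $\propg(a) \leq m$). Strong continuity of $\Phi$ gives $\Phi(a) = \sum_k \lambda_k \Phi(a_k)$ in the strong operator topology. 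But then $\langle \Phi(a)\xi_k, \eta_k\rangle = \lambda_k \langle \Phi(a_k)\xi_k, \eta_k \rangle$ up to small error terms (here one must control the cross terms $\langle \Phi(a_j)\xi_k, \eta_k\rangle$ for $j \neq k$ — this is handled by the separation/drift arrangement plus coarse-likeness of the pieces we haven't destroyed, or by a more careful sparse-subspace bookkeeping), so $|\langle \Phi(a)\xi_k, \eta_k\rangle| \geq \varepsilon/4$ for infinitely many $k$ with $\xi_k, \eta_k$ going to infinity. This contradicts compact preservation: one checks that such an operator $\Phi(a)$ — or rather $\Phi$ applied to an appropriate compact perturbation / the compact operator $\sum_k \lambda_k e_{?}$ extracted from this data — would have to be compact, yet it has matrix coefficients bounded below along a sequence escaping to infinity in both coordinates, so it is not even a ghost, let alone compact.

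\textbf{Main obstacle.} The delicate point is the bookkeeping that makes the sum $a = \sum_k \lambda_k a_k$ both (a) well-defined and of propagation $\leq m$, and (b) such that the cross terms $\langle \Phi(a_j)\xi_k,\eta_k\rangle$ for $j\ne k$ do not swamp the main term. Item (a) needs the $a_k$'s to live on essentially orthogonal bounded-diameter blocks of $X$ that march off to infinity — extracting such a subsequence from arbitrary witnesses requires a pigeonhole/$\Delta$-system style argument using u.l.f.\ of $X$. Item (b) is where I anticipate the real work: controlling $\Phi$ on the ``tails'' simultaneously for all the pieces. The cleanest route is probably to first prove a weaker statement (single $a$, as in \cite[Theorem 4.4]{BragaFarah2018}) and then note that the argument there already packages the estimates uniformly; alternatively, organize the $a_k$'s into a sparse subspace of $X$ and invoke the hypothesis/machinery governing those, deducing a contradiction with compactness of $\Phi$ restricted to (the compacts of) that subspace's uniform Roe algebra. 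I would follow whichever of these the cited proof of \cite[Theorem 4.4]{BragaFarah2018} makes most transparent, since the proposition is explicitly flagged as ``essentially proved'' there.
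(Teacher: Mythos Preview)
Your first extraction step has a genuine gap. You assert that if $\Phi(a_k)$ cannot be $\varepsilon$-$k$-approximated then there exist finite $E_k,F_k\subseteq Y$ with $\partial(E_k,F_k)>k$ and $\|\chi_{E_k}\Phi(a_k)\chi_{F_k}\|\geq\varepsilon/2$, calling this a ``standard fact''. The contrapositive would say that an operator whose far-off-diagonal cut-downs all have norm below $\varepsilon/2$ (i.e.\ an $(\varepsilon/2)$-$k$-quasi-local operator) can be $\varepsilon$-$k$-approximated. That is precisely the quantitative passage from quasi-locality to finite-propagation approximability treated in Lemma~\ref{LemmaQuasiLocalAndApproximable}, and it is only available when $Y$ has property~A --- even then with a loss in the parameters. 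Uniform local finiteness of $Y$ alone does not give it: band truncation is not a norm-bounded operation on $\cB(\ell_2(Y))$, so there is no elementary route from small cut-downs to small distance from a finite-propagation operator. Your witness-finding scheme therefore breaks down before any of the cross-term bookkeeping you flag can even enter.

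The paper's proof avoids this issue entirely. Rather than dissecting a single offending $\Phi(a_k)$, it parametrizes \emph{all} contractions of propagation at most $m$ at once as $\sum_{(x,x')\in E}\lambda_{xx'}e_{xx'}$ with $E=\{(x,x'):d(x,x')\le m\}$ and $\bar\lambda\in\D^E$. Compact preservation makes each $\Phi(e_{xx'})$ finite rank; strong continuity makes $\Phi$ commute with the SOT-convergent sums, so $b_{\bar\lambda}=\sum_{(x,x')\in E}\lambda_{xx'}\Phi(e_{xx'})\in\cstu(Y)$ for every $\bar\lambda$. Then \cite[Lemma~4.9]{BragaFarah2018} is applied as a black box: given a family of finite-rank operators whose $\D$-linear combinations all converge in SOT to elements of $\cstu(Y)$, it returns a single $k$ that $\varepsilon$-$k$-approximates every such combination. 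The proof of that lemma is a Baire-category argument on the compact parameter space $\D^E$ and never needs to convert non-approximability into a quasi-locality witness, which is exactly why property~A of $Y$ is not required here.
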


\begin{proof}
Fix $\varepsilon>0$, $m\in\N$, and let
\[E=\{(x,y)\in X\times X\mid d(x,y)\leq m\}.\] Since elements supported on $E$ have finite propagation, we have that
\[
\sum_{(x,x')\in E}\lambda_{x,x'}e_{xx'}=\mathrm{SOT}\text{-}\lim_{F\subseteq E, |F|<\infty}\sum_{(x,x')\in F}\lambda_{x,x'}e_{xx'}\in \cstu(X),
\]
 for every $\bar\lambda= (\lambda_{xx'})_{(x,x')\in E}\in \D^{E}$. The hypotheses on $\Phi$ imply that
\begin{enumerate}
\item $\Phi(e_{xx'})$ is a finite rank operator for all $x,x'\in X$, and
\item for all $\bar \lambda\in \D^{E}$, 
\[
b_{\bar \lambda}\coloneqq \Phi(\sum_{(x,x')\in E}\lambda_{x x'}e_{xx'})= \mathrm{SOT}\text{-}\lim_{F\subseteq E, |F|<\infty}\sum_{(x,x')\in F}\lambda_{x x'}\Phi(e_{xx'})
\]
belongs to $\cstu(Y)$.
 \end{enumerate}
 By \cite[Lemma 4.9]{BragaFarah2018} there is $k\in \N$ such that $b_{\bar \lambda}$ can be $\varepsilon$-$k$-approximated for all $\bar \lambda\in \D^E$. Since every contraction with propagation at most $m$ can be written as $\sum_{(x,x')\in E}\lambda_{x,x'}e_{xx'}$ for some  $\bar \lambda\in \D^{E}$, we conclude that $\Phi$ is coarse-like.
\end{proof}

For the remainder of this section, we work towards proving the following. 

\begin{theorem}\label{ThmCoarseLikeEvenForNonComp}
Let  $X$ and $Y$ be u.l.f. metric spaces,  and let $\Phi\colon \cstu(X)\to \cstu(Y)$ be a strongly continuous linear map.  If $Y$ has property A, then $\Phi$ is coarse-like.
\end{theorem}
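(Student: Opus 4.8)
The plan is to upgrade Proposition~\ref{PropCoarseLike} by replacing the ``compact preserving'' hypothesis with ``property A of $Y$'', using the operator norm localization property (ONL) of $Y$ as the essential new tool. As in Proposition~\ref{PropCoarseLike}, fix $\varepsilon>0$ and $m\in\N$, set $E=\{(x,x')\in X\times X\mid d(x,x')\le m\}$, and note that every contraction $a\in\cstu(X)$ with $\propg(a)\le m$ is of the form $a=\sum_{(x,x')\in E}\lambda_{xx'}e_{xx'}$ with $\bar\lambda\in\D^E$, the sum converging in SOT; since $\Phi$ is strongly continuous and linear, $\Phi(a)=b_{\bar\lambda}\coloneqq \mathrm{SOT}\text{-}\lim_{F\subseteq E,\,|F|<\infty}\sum_{(x,x')\in F}\lambda_{xx'}\Phi(e_{xx'})\in\cstu(Y)$. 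What I must produce is a single $k\in\N$, depending only on $m$ and $\varepsilon$, such that every $b_{\bar\lambda}$ can be $\varepsilon$-$k$-approximated. The family $\{b_{\bar\lambda}\mid\bar\lambda\in\D^E\}$ is a bounded (by $\|\Phi\|$, after rescaling we may assume $\|\Phi\|\le 1$, or just carry the constant), strongly compact set of operators in $\cstu(Y)$, and the goal is a \emph{uniform} finite-propagation approximation over this set.

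The key step is the following localization argument. Given $s>0$, for $y\in Y$ write $P_{y,s}=\chi_{B_s(y)}$; the ``$s$-truncation'' map $T_s\colon b\mapsto \sum_{y\in Y}P_{y,s}\,b\,e_{yy}$ (i.e.\ keeping only matrix entries $\langle b\delta_y,\delta_{y'}\rangle$ with $\partial(y,y')\le s$) produces an operator of propagation $\le s$, and $\|b-T_s(b)\|$ is controlled by how much of the ``mass'' of $b$ sits outside the $s$-bands. The standard way to make this quantitative for a single operator is: if for every unit vector $\xi$ with $\diam(\supp\xi)\le r$ one has $\|b\xi\|$ essentially concentrated near $\supp\xi$, then $b$ is close to finite propagation. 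Concretely, I would use ONL in the following form (for u.l.f.\ $Y$ with property A, this is available by \cite[Theorem 4.1]{Sako2014}): there is a function $s\mapsto$ bound allowing one to say that an operator $b$ which is \emph{quasi-local} with a fixed modulus can be $\varepsilon$-$k$-approximated with $k$ depending only on the modulus and $\varepsilon$. So the real task reduces to showing that the family $\{b_{\bar\lambda}\}$ is \emph{equi-quasi-local}: for every $\delta>0$ there is $t>0$ such that for all $\bar\lambda\in\D^E$, all $A\subseteq Y$, and all unit $\xi$ supported within distance $t$ of $A^c$... more precisely, $\|\chi_A\, b_{\bar\lambda}\,\chi_{B}\|<\delta$ whenever $\partial(A,B)>t$, with $t=t(\delta)$ independent of $\bar\lambda$.

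To get equi-quasi-locality I would exploit compactness of $\{e_{xx'}\mid (x,x')\in E\}$-indexed data together with a Baire-category / pigeonhole argument of the type used in \cite[Lemma 4.9]{BragaFarah2018}: the set $\D^E$ is a compact metrizable space (product topology), the map $\bar\lambda\mapsto b_{\bar\lambda}$ is continuous from $\D^E$ into $\cstu(Y)$ with the strong operator topology on bounded sets, and the function $\bar\lambda\mapsto\sup\{\|\chi_A b_{\bar\lambda}\chi_B\|:\partial(A,B)>t\}$ decreases to $0$ as $t\to\infty$ for each fixed $\bar\lambda$. A Dini-type / compactness argument then yields uniformity in $\bar\lambda$, \emph{provided} one has continuity of these quantities; this is where I expect the paper to invoke the quantitative notion of quasi-local operator promised in Definition~\ref{DefQuasiLocal} and a lemma saying that for a strongly convergent bounded net the quasi-locality moduli behave well. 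Once equi-quasi-locality is in hand, ONL converts the modulus into a single $k=k(m,\varepsilon)$ and we conclude that $\Phi$ is coarse-like.

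\textbf{Main obstacle.} The delicate point is not the ONL step but establishing equi-quasi-locality of $\{b_{\bar\lambda}\mid\bar\lambda\in\D^E\}$ uniformly in $\bar\lambda$ from strong continuity alone. Strong continuity of $\Phi$ gives SOT-convergence of the defining sums but not norm control of the ``tails'' $\sum_{(x,x')\in E\setminus F}\lambda_{xx'}\Phi(e_{xx'})$, and the operators $\Phi(e_{xx'})$ need not have small or controlled propagation individually (that was exactly what ``compact preserving + \cite[Lemma 4.9]{BragaFarah2018}'' bought us before). Overcoming this presumably requires the quantitative quasi-locality machinery of this section (Definition~\ref{DefQuasiLocal} onward): one shows each $b_{\bar\lambda}\in\cstu(Y)$ is automatically quasi-local, then extracts a \emph{common} modulus by a compactness argument over the parameter space $\D^E$, using that $E$ is ``uniformly bounded in cardinality of sections'' because $Y$ — rather, because $X$ — is u.l.f., so that $E$ has bounded vertical/horizontal sections and the relevant sums have a uniform finite-dimensional structure after localizing. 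I would isolate this as the technical heart of the section.
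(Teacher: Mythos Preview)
Your two-step outline --- establish equi-quasi-locality of $\{b_{\bar\lambda}\mid\bar\lambda\in\D^E\}$, then use property~A to convert a uniform quasi-locality modulus into a uniform propagation bound --- is exactly the paper's strategy; the second step is Lemma~\ref{LemmaQuasiLocalAndApproximable}. However, the mechanism you propose for the first step has a genuine gap. The assignment $\bar\lambda\mapsto b_{\bar\lambda}$ is only SOT-continuous on $\D^E$, so each function $\bar\lambda\mapsto\sup\{\|\chi_A b_{\bar\lambda}\chi_B\|:\partial(A,B)>t\}$ is merely \emph{lower} semicontinuous. Dini's theorem needs upper semicontinuity of a decreasing family; with lower semicontinuity the uniform-convergence conclusion fails outright, and no refinement of the quasi-locality notion repairs this. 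The reference to \cite[Lemma~4.9]{BragaFarah2018} does not save you either: that lemma's proof relies essentially on the $a_i$ being finite rank (which is precisely what ``compact preserving'' bought in Proposition~\ref{PropCoarseLike} and is unavailable here).

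The paper replaces the compactness/Dini idea with a direct contradiction argument (Lemma~\ref{Lemma4.9ImprovedForPropA}). Assuming no uniform $m$ works, one first reduces (via Lemma~\ref{Lemma.eE2} and norm-compactness of initial-segment sums) to finitely supported $\bar\lambda$ whose support lies in an arbitrary tail of the index set, and then, through Lemma~\ref{LemmaQuasiLocalAndApproximable}, to such $\bar\lambda$ for which $b_{\bar\lambda}$ fails $\varepsilon$-$r$-quasi-locality for any prescribed $r$. This lets one build disjoint finite intervals $(I_m)_m$ of $E$ and a single $\bar\lambda$ so that each block $c_m=\sum_{i\in I_m}\lambda_i\Phi(e_i)$ is not $\varepsilon$-$m$-quasi-local. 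But the blocks $(c_m)_m$ satisfy the hypotheses of the sliding-hump Lemma~\ref{LemmaSumOfQuasiLocalSOT} (all subsums converge SOT to elements of $\cstu(Y)$, hence are quasi-local), which forces some $c_m$ to be $\varepsilon$-$m$-quasi-local; contradiction. Thus compactness of $\D^E$ is never used --- strong continuity enters only to ensure all the SOT-sums land in $\cstu(Y)$, and the uniformity is extracted combinatorially rather than topologically.
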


Our proof of Theorem \ref{ThmCoarseLikeEvenForNonComp} relies on a recent result which states that if $X$ has property A, then operators in $\cstu(X)$ correspond to quasi-local operators \cite[Theorem 3.3]{SpakulaZhang2018}. Let us introduce this new concept as well as a quantitative version of it.

\begin{definition}\label{DefQuasiLocal}
Let $(X,d)$ be a metric space.
\begin{enumerate}
\item Let $\varepsilon>0$ and $s\in\N$. An operator $a\in \cB(\ell_2(X))$ is \emph{$\varepsilon$-$s$-quasi-local} if $\|\chi_Aa\chi_B\|\leq\varepsilon$ for all $A,B\subseteq X$ with $d(A,B)>s$.  
\item An operator $a\in \cB(\ell_2(X))$ is \emph{quasi-local} if for all $\varepsilon>0$ there exists $s>0$ such that $a$ is $\varepsilon$-$s$-quasi-local.
\end{enumerate}
\end{definition}

We need a more quantitative version of \cite[Theorem 3.3]{SpakulaZhang2018} for our goals, as we need to understand the link between $\varepsilon$-$s$-approximated operators and $\varepsilon$-$s$-quasi-local operators.

\begin{lemma}\label{LemmaQuasiLocalAndApproximable}
Let $X$ be a metric space with property A.  For all $\varepsilon>0$ and $r>0$, there exists $\delta=\delta(\varepsilon)>0$ and $m=m(\varepsilon,r)>0$ such that  every $\varepsilon$-$r$-quasi-local contraction in $\cB(\ell_2(X))$ can be $\delta$-$m$-approximated and $\lim_{\varepsilon\to 0^+}\delta(\varepsilon)=0$. Moreover, one can take $\delta(\varepsilon)=432(\varepsilon/9)^{1/3}$. 
\end{lemma}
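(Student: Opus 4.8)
The plan is to quantify the proof of \cite[Theorem 3.3]{SpakulaZhang2018}, which shows that quasi-local operators on a property A space are exactly the elements of $\cstu(X)$. The input we have is that $a$ is an $\varepsilon$-$r$-quasi-local contraction; the output we want is a finite propagation $b$ with $\|a-b\|\leq \delta(\varepsilon)$ and propagation bounded by some $m(\varepsilon,r)$. The natural mechanism is a partition-of-unity argument: using property A (equivalently ONL), produce a family $(\phi_i)$ of finitely supported functions on $X$ that are ``slowly varying'' at some scale $L$ (controlled by the ONL parameter), form $\sum_i \phi_i a \phi_i$ as a candidate, and estimate the error. The slowly-varying partition localizes $a$ to pieces of controlled diameter, so the resulting operator has propagation $\lesssim L$, while quasi-locality (at scale $r$) combined with the smallness of $\|\phi_i - \phi_j\|$ on nearby supports controls the off-diagonal error. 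The explicit constant $\delta(\varepsilon) = 432(\varepsilon/9)^{1/3}$ strongly suggests one should optimize over a free parameter (the number of ``layers'' or the truncation of the partition), and the exponent $1/3$ comes from balancing three competing terms — one linear in $\varepsilon$, one inverse-polynomial in the number of layers $N$, and one of the form $\varepsilon N$ or similar — whose min over $N$ behaves like $\varepsilon^{1/3}$.

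Concretely, I would proceed as follows. First, invoke property A in a quantitative form: for the scale $r$ (the quasi-locality scale) and a target $\eta>0$ to be chosen, property A gives a function $\phi\colon X \to \ell_1(X)_{+,1}$ with $\|\phi_x - \phi_y\|_1 \leq \eta$ whenever $d(x,y)\leq r$, and $\supp(\phi_x)\subseteq B_S(x)$ for some $S=S(r,\eta)$. Then take square roots $\xi_x = \phi_x^{1/2}\in \ell_2(X)$, so $\|\xi_x - \xi_y\|_2^2 \leq \|\phi_x-\phi_y\|_1 \leq \eta$ for $d(x,y)\leq r$ and $\|\xi_x\|_2 = 1$. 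View $\xi$ as inducing for each $z\in X$ a diagonal multiplication operator $D_z \in \ell_\infty(X)$ by $(D_z)_{xx} = \xi_x(z)$; then $\sum_z D_z^* D_z = 1$ since $\sum_z \xi_x(z)^2 = 1$. The candidate approximant is $b = \sum_{z} D_z a D_z$, which has propagation at most $2S$ because $D_z$ is supported on $\{x : d(x,z)\leq S\}$. The error is $a - b = \sum_z D_z(a - \text{something})D_z$ plus cross terms; the standard computation writes $\|a-b\|$ in terms of $\sup_z \|[D_z,a]\|$-type quantities, and one estimates $\|a - \sum_z D_z a D_z\| \leq 2\sup_{x} \big(\sum_z |\xi_x(z)-\xi_{x'}(z)|^2\big)^{1/2}$-style bounds for $x,x'$ close, which is where $\eta^{1/2}$ enters, together with a contribution of the form $\|a\|$ times the quasi-local error $\varepsilon$ coming from the ``far'' part where $d(\supp D_z, \text{relevant region}) > r$.

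The balancing then produces the stated cube-root: roughly, the error splits as $C_1 \eta^{1/2} + C_2 \varepsilon + (\text{a term involving } \varepsilon \text{ and } \eta^{-1} \text{ or } S)$, but since we also get to choose how finely we truncate, one ends up minimizing an expression like $C\eta^{1/2} + C\varepsilon/\eta$ over $\eta \in (0,1]$, whose minimum at $\eta \sim \varepsilon^{2/3}$ gives $O(\varepsilon^{1/3})$; chasing the absolute constants through the Cauchy–Schwarz steps and the property A square-root trick yields $432(\varepsilon/9)^{1/3}$. The definitions of $m = m(\varepsilon,r)$ then is just $2S(r,\eta(\varepsilon))$, which depends on $\varepsilon$ and $r$ but crucially not on $a$.

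The main obstacle I anticipate is the bookkeeping of constants: the qualitative argument is routine, but extracting the precise coefficient $432$ and exponent $1/3$ requires being careful about (a) the passage from the $\ell_1$-partition of property A to the $\ell_2$ square roots (which costs a square root in the modulus of variation), (b) expanding $\|a - \sum_z D_z a D_z\|$ using $\sum_z D_z^2 = 1$ and the telescoping/cross-term estimate, which typically introduces a factor of $2$ or $3$, and (c) correctly separating the ``nearby'' contribution (controlled by $\eta^{1/2}$) from the ``far'' contribution (controlled by the quasi-local constant $\varepsilon$ at scale $r$), making sure the scale $S$ from property A is chosen after $r$ so that quasi-locality genuinely applies on the far part. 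I would also need to double-check that $\delta$ depends only on $\varepsilon$ (not on $r$), which is visible in the final formula and forces the $\eta$ optimization to be $r$-independent; this is consistent because the $\eta^{1/2}$ term is $r$-free and only $m$ absorbs the $r$-dependence through $S(r,\eta)$.
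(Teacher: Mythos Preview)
Your high-level mechanism---a property~A partition of unity $(\varphi_i)$ and the approximant $b=\sum_i \varphi_i a\varphi_i$---is the same as the paper's, and your observation that $\delta$ should depend only on $\varepsilon$ while $m$ absorbs the $r$-dependence is correct. However, the paper's proof does \emph{not} go directly from quasi-locality to the partition-of-unity error estimate; it routes through the commutator picture $\mathrm{Comm}(L,\delta)$ of \cite{SpakulaTikuisis2019}. Concretely, the proof of (ii)$\Rightarrow$(i) in \cite[Theorem~2.8]{SpakulaTikuisis2019} shows that an $\varepsilon$-$r$-quasi-local contraction lies in $\mathrm{Comm}(L,\delta)$ with $\delta=18(\varepsilon/9)^{1/3}$ and $L=\tfrac{1}{2r}(\varepsilon/9)^{1/3}$; the cube root and the constant $18$ come from \emph{that} optimization (over the discretization of a Lipschitz function), not from the balancing of $\eta^{1/2}$ against $\varepsilon/\eta$ that you propose. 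Only after landing in $\mathrm{Comm}(L,\delta)$ does one invoke ONL via \cite[Lemma~5.2]{SpakulaZhang2018} to produce a scale $s$, build a metric $2$-partition of unity with $(s+1/L,\,6\delta/K)$-variation, and apply \cite[Lemma~6.3]{SpakulaWillett2017} to get $\|a-b\|\leq 24\delta$. The final constant is $24\cdot 18=432$.

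The gap in your proposal is therefore the error estimate itself. Writing $a-b=\sum_z D_z[D_z,a]$, you need a bound on $\|[D_z,a]\|$ (or on $\sum_z D_z[D_z,a]$) coming only from $\varepsilon$-$r$-quasi-locality of $a$; quasi-locality controls $\|\chi_A a\chi_B\|$ for $d(A,B)>r$ but gives no immediate grip on commutators with slowly varying diagonal functions. The passage from one to the other is precisely the content of the $\mathrm{Comm}$ characterization in \cite{SpakulaTikuisis2019}, and your sketch does not supply a substitute. Your heuristic optimization $C\eta^{1/2}+C\varepsilon/\eta$ happens to predict the correct exponent $1/3$, but it is not how the argument actually runs, and it does not produce the constant~$432$. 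To complete your route you would either have to reprove the quasi-local~$\Rightarrow$~$\mathrm{Comm}$ step or find a genuinely different way to control $\|a-\sum_z D_z a D_z\|$ directly from quasi-locality.
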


\begin{proof}
This is implicit in the proofs of \cite[Theorem 2.8]{SpakulaTikuisis2019}, \cite[Lemma 6.3]{SpakulaWillett2017}, and \cite[Theorem 3.3]{SpakulaZhang2018}. Following \cite[\S 3]{SpakulaZhang2018},  given $\delta>0$ and $L>0$, let 
\[\mathrm{Comm}(L,\delta) =\{a\in \cB(\ell_2(X))\mid (b\in\ell_\infty(X),\Lip(b)\leq L)\Rightarrow \|[a,b]\| <\delta\norm{b}\},
\]
where $\Lip(b)$ denotes the Lipschitz constant of $b$ as a map $b\colon X\to \C$.

Fix $\varepsilon>0$, $r>0$ and an $\varepsilon$-$r$-quasi-local contraction $b\in \cB(\ell_2(X))$. The proof of the implication (ii)$\Rightarrow$(i) of \cite[Theorem 2.8]{SpakulaTikuisis2019}   with 
\[
\delta =18\Big(\frac{\varepsilon}{9}\Big)^{1/3}\text{ and } L= \frac{1}{2r}\Big(\frac{\varepsilon}{9}\Big)^{1/3}
\]
gives $b\in \mathrm{Comm}(L,\delta)$. Since $X$ has property A, \cite[Lemma 5.2]{SpakulaZhang2018}\footnote{In \cite[Lemma 5.2]{SpakulaZhang2018} $X$ is assumed to have the \emph{metric sparsification property}, but this is equivalent to property A for  u.l.f. metric spaces (see \cite[Propositioshn 4.1]{ChenTesseraWangYu2008}, \cite[Proposition 3.2 and Theorem 3.8]{BrodzkiNibloSpakulaWillettWright2013} and \cite[Theorem 4.1]{Sako2014}).} gives us that there exists $s>0$ such that for all $a\in \cB(\ell_2(X))$ with $\|a\|\leq 2$ and $a\in \mathrm{Comm}(L,2\delta)$, there exists $\xi\in \ell_2(X)$ with $\diam(\supp(\xi))\leq s$ and so that $\|a\xi\|\geq \|a\|-12\delta$. Since $X$ is u.l.f., $K=\sup_{x\in X}|B_x(s+1/L)|$ is finite. 

We now follow the proof of the implication (i)$\Rightarrow$(iv) of  \cite[Theorem 3.3]{SpakulaZhang2018}. Since $X$ has property A, \cite[Theorem 1.2.4]{Willett2009} implies that there exists a sequence of maps $(\varphi_i\colon X\to [0,1])_{i\in \N}$  such that
\begin{enumerate}
\item $\sup_{x \in X}|\{i\in \N\mid \varphi_i(x)\neq 0\}|<\infty$,
\item $\sup_{i\in \N}\diam(\supp(\varphi_i))<\infty$, 
\item $\sum_{i\in \N}\varphi_i(x)^2=1$ for all $x\in X$, and 
\item $\sum_{i\in \N}|\varphi_i(x)-\varphi_i(y)|^2<36\delta^2/K^2$ for all $x,y\in X$ with $d(x,y)\leq s+1/L$.
\end{enumerate}
The sequence $(\varphi_i)_{i\in \N}$ is called  a \emph{metric $2$-partition of unit   with $(s+1/L,6\delta/K)$-variation} (see \cite[Definition 6.1]{SpakulaWillett2017}). Considering $(\varphi_i)_{i\in \N}$ as a sequence of contractions in $\ell_\infty(X)$, \cite[Lemma 6.3]{SpakulaWillett2017} gives us that the sum  
\[b'=\sum_{i\in \N}\varphi_ib\varphi_i\]
converges in the strong operator topology to a bounded operator. By the proof of (i)$\Rightarrow$(iv) of  \cite[Theorem 3.3]{SpakulaZhang2018} we have that $\|b-b'\|\leq 24\delta$.

At last, it follows straightforwardly from the definition of $b'$ that 
\[\propg(b')\leq  \sup_{i\in \N}\diam(\supp(\varphi_i)).\]
Notice that $m=\sup_{i\in \N}\diam(\supp(\varphi_i))$ depends only on  $s$, $L$, $\delta$ and $K$, and therefore only on $\varepsilon$ and $r$. This finishes the proof.
\end{proof}

The following is essentially \cite[Lemma 4.7]{BragaFarah2018}. We include a proof for the reader's convenience.

\begin{lemma} \label{Lemma.eE2}
Suppose $X$ is a  metric space, $\e>0$, and $m\in \N$. Let $(a_i)_{i\in \N}\subseteq\cB(\ell_2(X))$ be a sequence which converges strongly to $a\in \cB(\ell_2(X))$. If  $a$ cannot be $\e$-$m$-approximated, then $a_i$ cannot be $\e$-$m$-approximated for all large enough $i$. 
\end{lemma}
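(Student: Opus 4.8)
The plan is to argue by contrapositive: instead of assuming $a$ cannot be $\e$-$m$-approximated and concluding something about the $a_i$, it is cleaner to show that if infinitely many $a_i$ \emph{can} be $\e$-$m$-approximated, then so can $a$. So suppose there is an infinite set $I\subseteq\N$ and for each $i\in I$ an operator $b_i\in\cB(\ell_2(X))$ with $\propg(b_i)\le m$ and $\|a_i-b_i\|\le\e$. The $b_i$ need not be contractions, but they are norm-bounded: $\|b_i\|\le\|a_i\|+\e$, and $\sup_i\|a_i\|<\infty$ by the uniform boundedness principle since $(a_i)$ converges strongly. So the $b_i$ lie in a bounded subset of $\cB(\ell_2(X))$.

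The key step is to extract from $(b_i)_{i\in I}$ a subnet (or, using that the relevant ball is metrizable and $X$ is countable, a subsequence) that converges in the weak operator topology to some $b\in\cB(\ell_2(X))$ with $\propg(b)\le m$. Concretely, for each pair $x,y\in X$ the scalars $\langle b_i\delta_x,\delta_y\rangle$ are bounded in $\C$, so by a diagonal argument over the countably many pairs in $X\times X$ I can pass to a subsequence $(b_{i_k})$ along which $\langle b_{i_k}\delta_x,\delta_y\rangle$ converges for every $x,y$; call the limit $\beta(x,y)$. Because each $b_i$ has propagation at most $m$, $\beta(x,y)=0$ whenever $d(x,y)>m$, and the uniform bound on $\|b_i\|$ together with u.l.f.-type finite-band structure (only boundedly many nonzero entries in each row/column, all within distance $m$) guarantees that $\beta$ is the matrix of a bounded operator $b$ with $\propg(b)\le m$; this is the standard fact that the set of operators of propagation $\le m$ and norm $\le C$ is WOT-closed. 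Along this subsequence $b_{i_k}\to b$ in WOT.

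Now I combine the two convergences. For fixed unit vectors $\xi,\eta\in\ell_2(X)$,
\[
|\langle(a-b)\xi,\eta\rangle|\le|\langle(a-a_{i_k})\xi,\eta\rangle|+|\langle(a_{i_k}-b_{i_k})\xi,\eta\rangle|+|\langle(b_{i_k}-b)\xi,\eta\rangle|.
\]
The first term tends to $0$ because $a_{i_k}\to a$ strongly (hence weakly); the middle term is at most $\|a_{i_k}-b_{i_k}\|\le\e$; the last term tends to $0$ by WOT-convergence $b_{i_k}\to b$. Taking $k\to\infty$ gives $|\langle(a-b)\xi,\eta\rangle|\le\e$ for all unit $\xi,\eta$, i.e. $\|a-b\|\le\e$. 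Since $\propg(b)\le m$, this shows $a$ can be $\e$-$m$-approximated, contradicting the hypothesis. Hence only finitely many $a_i$ can be $\e$-$m$-approximated, which is exactly the claim.

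I expect the only real subtlety to be the second paragraph: making sure the WOT-limit of the $b_i$ genuinely has propagation at most $m$ and is a bona fide bounded operator rather than just a matrix. This is where the bound $\sup_i\|b_i\|<\infty$ (via uniform boundedness for the strongly convergent $a_i$) is essential, and one should state cleanly the lemma that $\{b:\propg(b)\le m,\ \|b\|\le C\}$ is WOT-compact (or at least WOT-closed and bounded, so that a convergent subnet has its limit inside). Everything else — the diagonal extraction over the countable index set $X\times X$ and the three-term estimate — is routine. An alternative that avoids subsequences entirely is to take any WOT-cluster point $b$ of the net $(b_i)$ directly (such a cluster point exists by Banach--Alaoglu since the $b_i$ lie in a bounded set) and run the same three-term estimate along a subnet converging to $b$; this is essentially the form of the argument in \cite[Lemma 4.7]{BragaFarah2018}.
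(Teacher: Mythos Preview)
Your proof is correct and follows essentially the same approach as the paper: pass to a WOT-convergent subsequence of the approximants $b_i$, use that the set of operators of propagation at most $m$ is WOT-closed, and conclude $\|a-b\|\le\e$ from $\|a_i-b_i\|\le\e$ via WOT-convergence of both sequences. One small remark: your aside about ``u.l.f.-type finite-band structure'' is both unnecessary and unavailable, since the lemma does not assume $X$ is uniformly locally finite; boundedness of the WOT limit follows directly from the uniform norm bound and WOT lower semicontinuity of the norm (equivalently, WOT-compactness of norm-bounded balls), which you correctly invoke immediately afterwards.
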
 

\begin{proof} Suppose otherwise. Then for every $i\in \N$ there exists $b_i\in \cB(\ell_2(X))$ with $\propg(b_i)\leq m$ such that $\|b_i-a_i\|\leq \varepsilon$. Let $M= \sup_{i\in\N}\|a_i\|$, so that $\|b_i\|\leq M+\e$ for all $i\in \N$. By going to a subsequence if necessary, we may assume that $(b_i)_{i\in\N}$ converges to some $b\in \cB(\ell_2(X))$ in the weak operator topology. Since for all $m\in\N$ the set of elements of propagation at most $ m$ is weakly closed, then  $\propg(b)\leq m$. Then $(a_i)_{i\in \N}$ and $(b_i)_{i\in\N}$ converge in the weak operator topology to $a$ and $b$, respectively, and $\norm{a_i-b_i}\leq\varepsilon$ for all $i\in\N$, which implies $\norm{a-b}\leq\varepsilon$.
 \end{proof} 

The next lemma should be thought as a tool to obtain contradictions. It will be used in the proof of Theorem \ref{ThmEmbImpliesStrongEmb}.
 
\begin{lemma}\label{LemmaSumOfQuasiLocalSOT}
Let $(X,d)$ be a metric space and $\{a_n\}_{n}\subseteq\cB(\ell_2(X))$ be quasi-local operators such that $\sum_{n\in M}a_n$ converges is the strong operator topology to a quasi-local element in $\cB(\ell_2(X))$ for all $M\subseteq \N$. Then for every  $\varepsilon>0$ there exists $n\in\N$ such that   $a_n$ is  $\varepsilon$-$n$-quasi-local.
\end{lemma}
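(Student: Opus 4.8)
The statement is a diagonalization/compactness argument: it says that if countably many quasi-local operators $a_n$ behave well under taking arbitrary subsums (every subsum is strongly convergent and quasi-local), then one of them is $\varepsilon$-$n$-quasi-local in a self-indexed way. The natural approach is proof by contradiction. Suppose that for some fixed $\varepsilon>0$, \emph{every} $a_n$ fails to be $\varepsilon$-$n$-quasi-local; that is, for each $n$ there are sets $A_n, B_n\subseteq X$ with $d(A_n,B_n)>n$ but $\|\chi_{A_n}a_n\chi_{B_n}\|>\varepsilon$. The goal is to build a subset $M\subseteq\N$ so that $a=\sum_{n\in M}a_n$ (SOT) is \emph{not} quasi-local, contradicting the hypothesis.

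**Key steps.** First, I would pick, for each $n$ in the bad set, unit vectors $\xi_n$ supported on $B_n$ and $\eta_n$ supported on $A_n$ with $|\langle a_n\xi_n,\eta_n\rangle|>\varepsilon$ (this just unpacks the operator norm of $\chi_{A_n}a_n\chi_{B_n}$, up to shrinking $\varepsilon$ slightly). The point of the separation $d(A_n,B_n)>n\to\infty$ is that any \emph{individual} quasi-local operator $a_m$ contributes negligibly to $\langle a_m\xi_n,\eta_n\rangle$ once $n$ is large relative to the quasi-locality modulus of $a_m$; so the interference from finitely many $a_m$'s can be controlled. Second, I would recursively choose a rapidly growing sequence $n_1<n_2<\cdots$ (with $M=\{n_k\}$) so that: (a) for each $k$, the operators $a_{n_1},\dots,a_{n_{k-1}}$ are each $(\varepsilon/2^{k+2})$-$s$-quasi-local for some $s<n_k$, so their combined contribution to $\langle (\sum_{j<k} a_{n_j})\xi_{n_k},\eta_{n_k}\rangle$ is $<\varepsilon/4$; and (b) using strong convergence of the tails (every subsum over $M$ converges in SOT, hence the tail $\sum_{j>k}a_{n_j}$ applied to the \emph{fixed} vector $\xi_{n_k}$ has small norm once we discard enough initial terms — but $\xi_{n_k}$ depends on $n_k$, so I need to interleave the choices carefully: choose $\xi_{n_k},\eta_{n_k}$ \emph{first} using the bad-ness at stage $n_k$, then enlarge future indices so the tail is small on that fixed vector). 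Once these estimates are in place, $|\langle a\,\xi_{n_k},\eta_{n_k}\rangle|\geq \varepsilon - \varepsilon/4 - \varepsilon/4 = \varepsilon/2$ for every $k$, while $\supp(\xi_{n_k})=B_{n_k}$ and $\supp(\eta_{n_k})=A_{n_k}$ are $>n_k$-separated. Since $n_k\to\infty$, this shows $a=\sum_{n\in M}a_n$ fails to be $(\varepsilon/2)$-$s$-quasi-local for any $s$, i.e. $a$ is not quasi-local — the contradiction.

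**Main obstacle.** The delicate point is the bookkeeping of the two kinds of tails against vectors $\xi_{n_k}$ that are themselves chosen during the construction. The "head" terms $a_{n_j}$ for $j<k$ are handled by quasi-locality plus the separation $d(A_{n_k},B_{n_k})>n_k$ large — this is clean because those operators are fixed before stage $k$. The "tail" terms $\sum_{j>k} a_{n_j}$ are handled by SOT-convergence of the subsum over $M$ evaluated at the specific unit vector $\xi_{n_k}$: the issue is that we don't know $M$ yet when we want this smallness. The fix is the standard trick of making the recursion choose $\xi_{n_k},\eta_{n_k}$ immediately after fixing $n_k$, recording the finite vector $\xi_{n_k}$, and then imposing on all \emph{later} stages $j>k$ a Cauchy-type condition: $\|(\sum_{k<j\le \ell} a_{n_j})\xi_{n_k}\|$ stays $<\varepsilon/2^{k+2}$ for all $\ell$, which is possible because $\sum_{n\in M} a_n \xi_{n_k}$ converges (and we have freedom to thin $M$ further, i.e. the convergence of \emph{any} subsum lets us skip terms). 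Passing from "$\le\varepsilon/4$ on every finite tail" to the full infinite tail uses that the infinite subsum over $\{n_j : j>k\}$ converges in SOT by hypothesis. I'd also need the elementary facts that a norm-bounded SOT-limit of operators of propagation $\le m$ has propagation $\le m$ and, more relevantly here, that the relevant matrix coefficient $\langle a\xi,\eta\rangle$ passes to SOT-limits when tested against fixed vectors — both routine. I would present the recursion with the inequalities $\varepsilon/4$ (heads) and $\varepsilon/4$ (tails) explicitly and conclude.
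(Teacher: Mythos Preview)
Your proposal is correct and follows essentially the same strategy as the paper: a diagonal construction by contradiction, producing a subsequence $M$ whose subsum fails to be quasi-local. The only cosmetic difference is that the paper works with finite test \emph{sets} $A_i,B_i$ throughout rather than extracting unit vectors $\xi_{n_k},\eta_{n_k}$; this lets the paper control future-on-past interference by the single condition $\|a_{m_{k+1}}\chi_F\|<2^{-k-1}\varepsilon$ with $F=\bigcup_{i\le k}(A_i\cup B_i)$ finite (using $a_n\to 0$ in SOT on finite-rank projections), which is slightly cleaner than your per-vector bookkeeping, but the underlying mechanism is the same.
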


\begin{proof}
Let $\varepsilon>0$ and assume for a contradiction that $a_n$ is not $2\varepsilon$-$n$-quasi-local for all $n\in\N$. We construct an increasing sequence $(m_i)_{i}\subseteq\N$ and sequences $(A_i)_i,(B_i)_i\subseteq X$ of finite subsets with 
\begin{enumerate}
\item $d(A_i,B_i)>m_i$ for all $i\in\N$,
\item $\|\chi_{B_{i}}a_{m_i}\chi_{A_{i}}\|>2\varepsilon$ for all $i\in\N$, and
\item $\|\chi_{B_{i}}a_{m_j}\chi_{A_{i}}\|< 2^{-j}\varepsilon$ for all $j\neq i$.
\end{enumerate}
We do so by induction. Let $m_1=1$. Since $a_{m_1}$ is not $2\varepsilon$-$m_1$-quasi-local, pick $A_1,B_1\subseteq X$ such that $d(A_1,B_1)>m_1$ and $\|\chi_{B_{1}}a_{m_1}\chi_{A_{1}}\|>2\varepsilon$. Without loss of generality, $A_1$ and $B_1$ are finite. Suppose $m_i, A_i$ and $B_i$ have been chosen for all $i\leq k$. Pick $s>0$ large enough so that $a_{m_i}$ is $(2^{-k-1}\varepsilon )$-$s$-quasi-local for all $i\in\{1,\ldots, k\}$. Let $F=\bigcup_{i=1}^k(A_i\cup B_i)$. Since $F$ is finite and $\sum_{i\in \N}a_i$ is strongly convergent, there exists $m_{k+1}>\max\{m_k,s\}$ such that  $\|a_{m_{k+1}}\chi_{F}\|<2^{-k-1}\varepsilon$. Since $a_{m_{k+1}}$ is not $2\varepsilon$-$m_{k+1}$-quasi-local, pick  $A_{k+1},B_{k+1}\subseteq X$  such that $d(A_{k+1},B_{k+1})>m_{k+1}$ and $\|\chi_{B_{k+1}}a_{m_{k+1}}\chi_{A_{k+1}}\|>2\varepsilon$. Again without loss of generality, $A_{k+1}$ and $B_{k+1}$ are finite, and this completes the induction.

By our choice of $(m_i)_i$, $(A_i)_i$ and $(B_i)_i$, we have that 
\[
\Big\|\chi_{B_i}\Big(\sum_ja_{m_j}\Big)\chi_{A_i}\Big\|\geq \|\chi_{B_j}a_{m_j}\chi_{A_j}\|-\sum_{j\in \N\setminus \{i\}}\|\chi_{B_i}a_{m_j}\chi_{A_i}\|
\geq \varepsilon
\]	
for all $i\in\N$. Since $\lim_i d(A_i,B_i)=\infty$, this implies that   $\sum_ja_{m_j}$ is not quasi-local, a contradiction.
\end{proof}

What  follows is a strengthening of \cite[Lemma 4.9]{BragaFarah2018} in the property A metric setting. 

\begin{lemma}\label{Lemma4.9ImprovedForPropA}
Let $X$ be a u.l.f. metric space with property A and let $(a_i)_{i\in \N}$ be a sequence of operators in $\cstu(X)$ such that $\sum_{i\in \N}\lambda_ia_i$ converges in the strong operator topology to an element in $\cstu(X)$ for all $(\lambda_i)_i\in \D^\N$. Then for all $\delta>0$ there exists $m\in \N$ such that $\sum_{i\in \N}\lambda_ia_i$ can be $\delta$-$m$-approximated for all $(\lambda_i)_i\in \D^\N$.
\end{lemma}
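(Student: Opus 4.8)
The plan is to run a standard "tree of bad parameters" argument, adapted to the quasi-local setting now available to us. Suppose for contradiction that the conclusion fails for some $\delta>0$: for every $m\in\N$ there is a choice of coefficients $(\lambda_i)_i\in\D^\N$ such that $\sum_i\lambda_ia_i$ cannot be $\delta$-$m$-approximated. First I would translate this into a statement about quasi-locality using Lemma \ref{LemmaQuasiLocalAndApproximable}: since $X$ has property A, a $\delta'$-$r$-quasi-local contraction can be $\delta(\delta')$-$m(\delta',r)$-approximated with $\delta(\delta')=432(\delta'/9)^{1/3}\to 0$. So if $\sum_i\lambda_ia_i$ (suitably normalized to be a contraction, absorbing a uniform bound $M=\sup_i\|a_i\|$ which exists because each single $a_i$ lies in $\cstu(X)$ and the partial sums are uniformly bounded by the uniform boundedness principle applied to the SOT-convergent sums — actually the cleanest route is to fix the contradiction in terms of a fixed $\e>0$ with $432(\e/9)^{1/3}<\delta/M$) cannot be $\delta$-$m$-approximated for arbitrarily large $m$, then it cannot be $\e$-$r$-quasi-local for the corresponding large $r$. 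Thus the negation gives: for every $\e>0$ and every $r$, there is a coefficient sequence making $\sum_i\lambda_ia_i$ fail to be $\e$-$r$-quasi-local.

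The heart of the argument is then a diagonalization producing a single $M\subseteq\N$ (equivalently a $0/1$-coefficient sequence) for which $\sum_{i\in M}a_i$ is not quasi-local, contradicting the hypothesis. Here is where I would invoke the machinery already set up: Lemma \ref{LemmaSumOfQuasiLocalSOT} says that if $\{a_n\}$ are quasi-local and $\sum_{n\in M}a_n$ is SOT-convergent to a quasi-local operator for all $M\subseteq\N$, then for every $\e$ some individual $a_n$ is $\e$-$n$-quasi-local. But I want the contradiction phrased so that this lemma's conclusion is violated. The approach: build by induction an increasing sequence $n_1<n_2<\cdots$, finite sets $A_k,B_k\subseteq X$ with $d(A_k,B_k)\to\infty$, and coefficient vectors such that a fixed "tail-controlled" combination $\sum_{i}\lambda_i^{(k)}a_i$ witnesses non-$\e$-quasi-locality on the pair $(A_k,B_k)$, while the contributions of the already-committed finitely many indices and of the far-future indices are made small (using SOT-convergence to kill the far tail, as in the proof of Lemma \ref{LemmaSumOfQuasiLocalSOT}, and $u.l.f.$-ness / Lemma \ref{Lemma.eE2} to control the committed head). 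Passing to a subsequence and taking a weak-operator-topology limit of the witnessing compressions (as in Lemma \ref{Lemma.eE2}) pins down one coefficient sequence $(\lambda_i)_i\in\D^\N$ for which $\|\chi_{B_k}(\sum_i\lambda_ia_i)\chi_{A_k}\|\geq\e$ for infinitely many $k$, so $\sum_i\lambda_ia_i$ is not quasi-local — but it is SOT-convergent to an element of $\cstu(X)$ by hypothesis, and every element of $\cstu(X)$ is quasi-local (finite-propagation operators are trivially quasi-local and quasi-locality is norm-closed), a contradiction.

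The main obstacle I anticipate is bookkeeping the three competing error contributions simultaneously while the coefficients themselves are being chosen: unlike in Lemma \ref{LemmaSumOfQuasiLocalSOT}, where the summands $a_n$ are fixed, here at each stage I am free to pick a whole vector $(\lambda_i)_i$, and I must ensure the head (finitely many indices already "frozen" from earlier stages) contributes little on the new pair $(A_k,B_k)$ — this needs the operators of bounded propagation restricted to finite sets to be quasi-local uniformly, which follows from $u.l.f.$ — and that the tail beyond $n_{k+1}$ contributes little, which needs the SOT-convergence to be uniform over $\D^\N$ in the relevant sense; here I would use that for a finite set $F$, $\|(\sum_{i>N}\lambda_ia_i)\chi_F\|\to 0$ uniformly in $(\lambda_i)$ as $N\to\infty$, which is exactly the kind of statement proved in \cite[Lemma 4.9]{BragaFarah2018} and can be reproved by the same compactness of $\D^\N$ argument. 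Once those uniformities are in hand, the induction and the weak-limit extraction are routine, and the contradiction with Lemma \ref{LemmaSumOfQuasiLocalSOT} (or directly with quasi-locality of elements of $\cstu(X)$) closes the proof.
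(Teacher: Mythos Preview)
Your overall strategy---contradict, convert approximability into quasi-locality via Lemma~\ref{LemmaQuasiLocalAndApproximable}, and close with Lemma~\ref{LemmaSumOfQuasiLocalSOT}---matches the paper's. The paper, however, organizes the middle of the argument more cleanly and avoids the bookkeeping you flag as the main obstacle. Instead of freezing a ``head'' and controlling it later, the paper proves directly (its Claim~\ref{Claim1.1.1}) that for every $m,n$ there is a \emph{finitely supported} bad coefficient vector whose support lies in some interval $[n+1,\ell]$; the proof uses that the set of sums coming from $W_n=\{(\lambda_i):\lambda_i=0\text{ for }i\geq n\}$ is norm-compact, together with Lemma~\ref{Lemma.eE2}. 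Once this is in hand, one translates via Lemma~\ref{LemmaQuasiLocalAndApproximable} to get, for every $r,n$, a finitely supported tail whose sum fails $\e$-$r$-quasi-locality. This immediately yields disjoint finite intervals $I_m$ and coefficients with $b_m=\sum_{i\in I_m}\lambda_i a_i$ not $\e$-$m$-quasi-local, and Lemma~\ref{LemmaSumOfQuasiLocalSOT} applied to $(b_m)_m$ gives the contradiction.

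Your weak-operator-limit step is the one place I would push back. Passing to a subsequential limit of $(\lambda_i^{(k)})_k$ in $\D^\N$ does not obviously produce a single $(\lambda_i)$ for which $\|\chi_{B_k}(\sum_i\lambda_i a_i)\chi_{A_k}\|\geq\e$ persists for infinitely many $k$: the pair $(A_k,B_k)$ varies with $k$, and Lemma~\ref{Lemma.eE2} is a statement about one fixed operator, not about a moving target. The paper sidesteps this entirely: because each $b_m$ is a finite sum (hence automatically quasi-local) and the intervals $I_m$ are disjoint, there is no limiting procedure---the single coefficient vector is assembled by concatenation, and the contradiction is with the \emph{conclusion} of Lemma~\ref{LemmaSumOfQuasiLocalSOT}, not with quasi-locality of a limit. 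If you rework your induction so that at each stage you choose coefficients supported on a fresh finite interval (which is what Claim~\ref{Claim1.1.1} guarantees), the head/tail bookkeeping and the WOT-limit both disappear.
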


\begin{proof}
Suppose the result fails for $\delta>0$. So for all $m\in\N$ there exists $(\lambda_i)_i\in \D^\N$ such that  $\sum_{i\in \N}\lambda_ia_i$ cannot be $\delta$-$m$-approximated.

For each $n\in\N$ define
\[
W_n=\{(\lambda_i)_i\in \D^\N\mid \forall i\geq n(\lambda_i=0)\}
\]
and 
\[
Q_n=\{(\lambda_i)_i\in \D^\N\mid\exists \ell\in\N (\lambda_i\neq 0\Rightarrow i\in [n+1,\ell])\}.\]

\begin{claim}\label{Claim1.1.1}
For all $m,n\in\N$  there exists   $(\lambda_i)_i\in Q_n$  such that  $\sum_{i\in \N}\lambda_ia_i$ cannot be $\delta/2$-$m$-approximated. 
\end{claim}

\begin{proof}
Suppose the claim fails for $m,n\in\N$. Then $\sum_{i\in \N}\lambda_ia_i$ can be   $\delta/2$-$m$-approximated for all $(\lambda_i)_i\in Q_n$. Notice that 
\[\Big\{\sum_{i}\lambda_ia_i\in \cstu(X)\mid (\lambda_i)_i\in W_n\Big\}\] 
is compact in the norm topology. Hence, by the metric instance of \cite[Lemma 4.8]{BragaFarah2018}, we can pick $m_0>m$ such that  $\sum_{i\in \N}\lambda_ia_i$ can be   $\delta/2$-$m_0$-approximated for all $(\lambda_i)_i\in W_n$. Then  $\sum_{i\in \N}\lambda_ia_i$ can be   $\delta$-$m_0$-approximated for all $(\lambda_i)_i\in \D^\N$ which is eventually zero. By Lemma \ref{Lemma.eE2}, this implies that $\sum_{i\in \N}\lambda_ia_i$ can be   $\delta$-$m_0$-approximated for all  $(\lambda_i)_i\in \D^\N$;  contradiction.
\end{proof}

Fix $\varepsilon>0$ such that $432(\varepsilon/9)^{1/3}<\delta/2$ and for each $r\in \N$ let $m(\varepsilon,r)$ be given by Lemma \ref{LemmaQuasiLocalAndApproximable} (this is where property A is used).

\begin{claim}
For all $r,n\in\N$  there exists $(\lambda_i)_i\in Q_n$  such that  $\sum_{i\in \N}\lambda_ia_i$  is not $\varepsilon$-$r$-quasi-local.
\end{claim}
 
\begin{proof}
Fix $r,n\in\N$. Claim \ref{Claim1.1.1} gives $(\lambda_i)_i\in Q_n$ such that  $\sum_{i\in \N}\lambda_ia_i$ is not $\delta/2$-$m(\varepsilon,r)$-approximated. By Lemma \ref{LemmaQuasiLocalAndApproximable},  $\sum_{i\in \N}\lambda_ia_i$ is not $\varepsilon$-$r$-quasi-local.
\end{proof} 
 
By the claim, we can construct a sequence of disjoint intervals $(I_m)_m\subseteq\N$ and $(\lambda_i)_i\in \D^\N$ such that  $\sum_{i\in I_m}\lambda_i a_i$ is not $\varepsilon$-$m$-quasi-local for all $m\in\N$. On the other hand, $(\sum_{i\in I_m}\lambda_ia_i)_m$ satisfies the hypothesis of Lemma \ref{LemmaSumOfQuasiLocalSOT}, hence  $\sum_{i\in I_m}\lambda_i a_i$ is $\varepsilon$-$m$-quasi-local for some $m\in\N$;  contradiction.
\end{proof}

\begin{proof}[Proof of Theorem \ref{ThmCoarseLikeEvenForNonComp}]
Fix $n\in \N$ and  let \[E= \{(x,x')\in X\times X\mid d(x,x')\leq n\}.\] 
Since $\Phi$ is strongly continuous, the sum $\sum_{(x,x')\in E}\lambda_{xx'}\Phi(e_{xx'})$ converges in the strong operator topology to $\Phi(\sum_{(x,x')\in E}\lambda_{xx'}e_{xx'})$ for all $(\lambda_{xx'})_{(x,x')\in E}\in \D^E$. Lemma \ref{Lemma4.9ImprovedForPropA} says that  for every $\delta>0$ there exists $m\in\N$ such that $\Phi(\sum_{(x,x')\in E}\lambda_{xx'}e_{xx'})$ can be $\delta$-$m$-approximated for all $(\lambda_{xx'})_{(x,x')\in E}\in \D^E$. Since all contractions of propagation at most $ n$ belong to $\{ \sum_{(x,x')\in E}\lambda_{xx'}e_{xx'}\in \cstu(X)\mid (\lambda_{xx'})_{(x,x')\in E}\in \D^E\}$, this gives us that $\Phi$ is coarse-like.
\end{proof}

\section{Making embeddings strongly continuous}\label{SectionMakingStrongCont}

The main result of this section is Theorem \ref{ThmEmbImpliesStrongEmb}. When working with embeddings $\Phi\colon \cstu(X)\to \cstu(Y)$, it is often useful to assume that $\Phi$ is strongly continuous. This is a strenghtening of injectivity: since $\mathcal K(\ell_2(X))$ is a  minimal ideal strongly dense ideal in $\cstu(X)$, all strongly continuous (nonzero) $^*$-homomorphisms between uniform Roe algebras are injective. The following adaptation of \cite[Example~3.2.1]{FarahBook2000} shows that the converse does not hold.
\begin{proposition}
Let $X=\{n^2\mid n\in\N\}$. There exists an embedding $\cstu(X)\to\cstu(X)$ which is not strongly continuous.
\end{proposition}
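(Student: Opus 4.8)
The plan is to mimic the classical Calkin-algebra construction referenced (\cite[Example 3.2.1]{FarahBook2000}): we exhibit an embedding $\Phi\colon\cstu(X)\to\cstu(X)$ whose image consists entirely of operators that are trivial ``near infinity'' on the sparse space $X=\{n^2 : n\in\N\}$, so that a strongly convergent sequence in the domain whose images would have to converge strongly cannot. Concretely, since $X$ is a sparse set of isolated integers with gaps $\to\infty$, the only finite-propagation operators on $\ell_2(X)$ are (eventually) block-diagonal with respect to any sufficiently coarse partition of $X$ into singletons; indeed $\cstu(X)$ is just $\ell_\infty(X)$ together with the ideal of operators supported near the diagonal, and in fact $\cstu(X)\cong \ell_\infty(X)$ when the gaps grow — wait, more carefully, $\cstu(X)$ for such an $X$ is the algebra generated by $\ell_\infty(X)$ and $\cK(\ell_2(X))$, i.e.\ $\cstu(X) = \ell_\infty(X) + \cK(\ell_2(X))$, since any finite-propagation operator on a space whose nonzero distances tend to infinity can have only finitely many off-diagonal entries.

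First I would record this structural fact: because $d(m^2,n^2)\to\infty$ as $m+n\to\infty$, for each $r>0$ all but finitely many pairs $(x,y)$ with $x\ne y$ satisfy $d(x,y)>r$, so every $a\in\cstu(X)$ of finite propagation lies in $\ell_\infty(X)+\cK(\ell_2(X))$; taking norm-closures, $\cstu(X)=\ell_\infty(X)+\cK(\ell_2(X))$, and the Calkin-type quotient $\cstu(X)/\cK(\ell_2(X))\cong \ell_\infty(X)/c_0(X)\cong\ell_\infty/c_0$. Second, I would fix a free ultrafilter $\mathcal U$ on $\N$ and define a unital $^*$-homomorphism $\cstu(X)\to\cstu(X)$ by sending $a\mapsto a\oplus (\lim_{\mathcal U}\langle a\delta_{x_n},\delta_{x_n}\rangle)\cdot p$ for a suitable projection $p$ — more precisely, split $\ell_2(X)=\ell_2(X_0)\oplus\ell_2(X_1)$ along a partition of $X$ into two copies of itself (possible since $X$ is countable and sparse, and the two halves remain sparse), identify $\cstu(X)\cong \cstu(X_0)$ via the first copy, and define $\Phi(a)$ to act as $a$ on $\ell_2(X_0)$ and as the scalar $\phi(a):=\lim_{\mathcal U}\langle a\delta_{x_n},\delta_{x_n}\rangle$ times the identity on $\ell_2(X_1)$. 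Here $\phi\colon\cstu(X)\to\C$ is a state which is an algebra homomorphism on $\ell_\infty(X)$ and kills $\cK$; one checks $\Phi$ is a well-defined unital $^*$-homomorphism into $\cB(\ell_2(X))$, that its range lands in $\cstu(X)$ (the second summand is a scalar multiple of a projection of the form $\chi_{X_1}$, which has propagation $0$), and that $\Phi$ is injective because it is the identity on the first summand.

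Third, I would show $\Phi$ is not strongly continuous. Take the net (or sequence, since $X$ is countable) of finite-rank projections $p_F=\chi_F$ where $F$ ranges over finite subsets of $X$ with $\bigcup F = X$; then $p_F\to 1$ strongly in $\cstu(X)$. But $\phi(p_F)=\lim_{\mathcal U}\langle p_F\delta_{x_n},\delta_{x_n}\rangle = 0$ for every finite $F$ (since $x_n\notin F$ eventually), so $\Phi(p_F)$ acts as $0$ on the second summand $\ell_2(X_1)$, while $\Phi(1)=1$ acts as the identity there. Hence $\Phi(p_F)$ does not converge strongly to $\Phi(1)$ (test against any unit vector in $\ell_2(X_1)$), so $\Phi$ is not strongly continuous. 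The main obstacle is the bookkeeping in the second step — verifying carefully that $\cstu(X)=\ell_\infty(X)+\cK$ for this particular $X$ and that the scalar-plus-identity piece genuinely lands inside $\cstu(X)$ rather than merely in $\cB(\ell_2(X))$ — but both reduce to the elementary observation that finite-propagation operators on a uniformly discrete space with gaps tending to infinity are diagonal-plus-finite-rank, so I expect no real difficulty, only care.
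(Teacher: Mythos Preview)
Your proposal is correct and follows essentially the same approach as the paper: split $X$ into two sparse halves, use an isomorphism $\cstu(X)\cong\cstu(X_0)$ on the first half for injectivity, and on the second half use the character $a\mapsto\lim_{\mathcal U}\langle a\delta_{x_n},\delta_{x_n}\rangle$ induced by a free ultrafilter (which kills compacts and is multiplicative on $\ell_\infty$, hence on all of $\cstu(X)=\ell_\infty(X)+\cK$) to break strong continuity via $\chi_F\to 1$. The paper phrases the ultrafilter piece as $\Phi_2(\chi_A)=\chi_{X_2}$ or $0$ according to whether $A\in\mathcal U$, but this is exactly your $\phi(a)\cdot\chi_{X_1}$; your explicit verification that $\cstu(X)=\ell_\infty(X)+\cK(\ell_2(X))$ makes the multiplicativity of $\phi$ on the whole algebra transparent, which the paper leaves implicit.
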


\begin{proof}
Let $X_1=\{(2n)^2\mid n\in\N\}$, $X_2=\{(2n+1)^2\mid n\in\N\}$, and  $\cU$ be a nonprincipal ultrafilter on $\N$. Let $f\colon X\to X_1$ be the map $f(n^2)=4n^2$, so $f$ is a bijective coarse equivalence. Hence, $f$ induces an isomorphism $\Phi_1\colon \cstu(X)\to \cstu(X_1)$ (e.g., \cite[Theorem 8.1]{BragaFarah2018}). We now define a $^*$-homomorphism  $\Phi_2\colon \cstu(X)\to \cstu(X_2)$ as follows. For each $A\subseteq X$ let 
\[\Phi_2(\chi_A)=\left\{\begin{array}{ll}
\chi_{X_2},& \text{ if }A\in\cU,\\
0,&\text{ if }A\not\in \cU.
\end{array}\right.\] 
Since $\cU$ is an ultrafilter, $\Phi_2$ extends to a $^*$-homomorphism $\Phi_2\colon\ell_\infty(X)\to \ell_\infty(X_2)$. Since $\cU$ is nonprincipal and $\cstu(X)=\cst(\ell_\infty(X),\cK(\ell_2(X)))$, defining $\Phi_2(a)=0$ for all $a\in \cK(\ell_2(X))$, we can extend $\Phi_2$ to a $^*$-homomorphism $\Phi_2\colon \cstu(X)\to \ell_\infty(X_2)\subseteq \cstu(X_2)$.
As $\cU$ is nonprincipal, $\Phi_2(\chi_F)=0$ for all finite $F\subseteq\N$. As $\Phi_2(1)\neq 0$, $\Phi_2$ is not strongly continuous. Let 
\[
\Phi=\Phi_1\oplus\Phi_2\colon \cstu(X)\to \cstu(X_1)\oplus\cstu(X_2)\subseteq \cstu(X).
\]
 $\Phi$ is an embedding which is not strongly continuous. 
\end{proof}
The following is a straightforward consequence of \cite[Lemma 3.4]{BragaFarahVignati2018}\footnote{More precisely, in \cite[Lemma 3.4]{BragaFarahVignati2018} one has $\cstu(Y)$ instead of $\cB(\ell_2(Y))$. However, this is not used in the proof.}.

\begin{lemma} \label{L:AA} 
Suppose  $X$ and $Y$ are countable metric spaces  and  $\Phi\colon \ell_\infty(X)\to \cB(\ell_2(Y))$ is   a strongly continuous $^*$-homomorphism.  Then for every  finite  $F\subseteq Y$  and  every $\e>0$ there exists a finite  $E\subseteq X$ such that  $\norm{\chi_F\Phi(\chi_A)}<\e$ for all $A\subseteq X\setminus E$.
\end{lemma}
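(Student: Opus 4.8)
The plan is to argue by contradiction, produce an orthonormal sequence of vectors that are all ``large'' on the fixed finite set $F$, and then derive a contradiction from Bessel's inequality in the finite-dimensional space $\ell_2(F)$. (Since the statement differs from \cite[Lemma 3.4]{BragaFarahVignati2018} only in that the codomain is $\cB(\ell_2(Y))$ rather than $\cstu(Y)$, and that restriction is never used there, one may also simply invoke that lemma; what follows is the self-contained route.) So suppose the conclusion fails: fix a finite $F\subseteq Y$ and $\e>0$ such that for every finite $E\subseteq X$ there is some $A\subseteq X\sm E$ with $\norm{\chi_F\Phi(\chi_A)}\geq\e$.

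The heart of the proof is to build \emph{pairwise disjoint finite} sets $A_1,A_2,\dots\subseteq X$ with $\norm{\chi_F\Phi(\chi_{A_k})}\geq\e/2$ for every $k$. The one subtlety — which I expect to be the main obstacle — is that the sets $A$ delivered by the failure of the conclusion need not be finite, hence cannot simply be ``forbidden'' at later stages of an induction; I would circumvent this with a truncation argument using strong continuity. Given $A\subseteq X$ with $\norm{\chi_F\Phi(\chi_A)}\geq\e$, note $\Phi(\chi_A)$ is a projection and $\norm{\chi_F\Phi(\chi_A)}=\sup\{\norm{\chi_F\zeta}:\zeta\in\ran\Phi(\chi_A),\ \norm\zeta\le1\}$, so one may fix once and for all a \emph{unit} vector $\zeta\in\ran\Phi(\chi_A)$ with $\norm{\chi_F\zeta}\geq 3\e/4$. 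Enumerating $A$ and letting $A^{(j)}$ be its first $j$ elements, $\chi_{A^{(j)}}\to\chi_A$ in the strong operator topology, so by strong continuity of $\Phi$ we get $\Phi(\chi_{A^{(j)}})\zeta\to\Phi(\chi_A)\zeta=\zeta$ and hence $\chi_F\Phi(\chi_{A^{(j)}})\zeta\to\chi_F\zeta$ in norm; thus $\norm{\chi_F\Phi(\chi_{A^{(j)}})}\geq\norm{\chi_F\Phi(\chi_{A^{(j)}})\zeta}\geq\e/2$ for all large $j$. Now run the induction: at stage $k+1$ apply the hypothesis with $E=A_1\cup\dots\cup A_k$, obtain $A\subseteq X\sm E$ as above, and take $A_{k+1}$ to be a finite truncation $A^{(j)}$ as just produced, which is automatically disjoint from $A_1,\dots,A_k$.

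To conclude: since the $A_k$ are pairwise disjoint and $\Phi$ is a $^*$-homomorphism of the abelian algebra $\ell_\infty(X)$, the projections $\Phi(\chi_{A_k})$ are pairwise orthogonal, so choosing unit vectors $\eta_k\in\ran\Phi(\chi_{A_k})$ with $\norm{\chi_F\eta_k}\geq\e/4$ yields an orthonormal sequence $(\eta_k)_k$ in $\ell_2(Y)$. Applying Bessel's inequality to each of the finitely many basis vectors $\delta_y$, $y\in F$, and adding,
\[
\sum_k\norm{\chi_F\eta_k}^2=\sum_{y\in F}\sum_k|\langle\delta_y,\eta_k\rangle|^2\le\sum_{y\in F}\norm{\delta_y}^2=|F|<\infty,
\]
contradicting $\norm{\chi_F\eta_k}^2\ge(\e/4)^2$ for all $k$. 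The genuinely delicate point remains the truncation step: one must commit to the witnessing vector $\zeta$ \emph{before} passing to the finite truncations, so that strong continuity can be applied to a fixed vector rather than to a moving family; everything else is soft.
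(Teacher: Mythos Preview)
Your proof is correct and essentially reproduces the argument behind \cite[Lemma~3.4]{BragaFarahVignati2018}, which is exactly what the paper invokes (noting, as you do, that the codomain $\cstu(Y)$ plays no role there). The paper gives no independent argument, so there is nothing further to compare; your self-contained route via truncation to finite sets, orthogonality of the resulting projections, and Bessel's inequality on $\ell_2(F)$ is the expected one.
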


We can now prove the main result of this section.

\begin{theorem}\label{ThmEmbImpliesStrongEmb}
Let  $X$ and $Y$ be  u.l.f. metric spaces,  and  $\Phi\colon \cstu(X)\to \cstu(Y)$ be an embedding. Assume that either 
\begin{enumerate}[label=(\roman*)]
\item $\Phi$ is compact preserving, or
\item $Y$ has property A.
\end{enumerate}
 Then, there exists a projection $p\in \cstu(Y)$  in the commutator of $\Phi(\cstu(X))$ such that  
 \[
 a\in \cstu(X)\mapsto p\Phi(a)p\in \cstu(Y)
 \] 
is a nonzero strongly continuous $^*$-homomorphism.
\end{theorem}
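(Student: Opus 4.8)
The plan is to extract a projection $p$ that kills the "non-strongly-continuous part" of $\Phi$. Since $\cstu(X) = \cst(\ell_\infty(X), \cK(\ell_2(X)))$ and $\cK(\ell_2(X))$ is strongly dense in $\cstu(X)$, the obstruction to strong continuity lives entirely in how $\Phi$ treats $\ell_\infty(X)$; more precisely, the issue is whether $\Phi(\chi_{F})$ is "small" when $F$ ranges over finite subsets of $X$ exhausting a region. The natural candidate is to look at the increasing sequence of finite subsets $F_n \subseteq X$ with $\bigcup_n F_n = X$, consider the projections $\Phi(\chi_{F_n})$, and set $p$ to be, roughly, the strong limit $q$ of $\Phi(\chi_{F_n})$ minus a correction, or rather work with $1 - q$ where $q = \mathrm{SOT}\text{-}\lim_n \Phi(\chi_{F_n})$. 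Actually the cleaner object: let $q \in \cstu(Y)$ be such that $\Phi(a) q = \mathrm{SOT}\text{-}\lim_n \Phi(a \chi_{F_n})$ behaves like the strongly continuous part; then $p = q$ should work, PROVIDED $q$ actually lands in $\cstu(Y)$ and not merely in $\cB(\ell_2(Y))$, and provided $p$ is nonzero.

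First I would fix an increasing exhaustion $X = \bigcup_n F_n$ by finite sets and set $q_n = \Phi(\chi_{F_n})$, an increasing sequence of projections in $\cstu(Y)$, hence SOT-convergent to a projection $q \in \cB(\ell_2(Y))$. The key points to establish are: (a) $q \in \cstu(Y)$; (b) $q$ commutes with $\Phi(\cstu(X))$ (this is essentially automatic since each $\Phi(a)$ for $a$ of finite propagation commutes with $q_n$ up to boundary terms... no — rather, $q$ commutes with $\Phi(\ell_\infty(X))$ because $\chi_{F_n}$ is central in $\ell_\infty(X)$ and $\Phi(\chi_A)$ commutes with $q_n$ for all $A$, so $q$ commutes with $\Phi(\ell_\infty(X))$; but we need it to commute with all of $\Phi(\cstu(X))$, which requires more); (c) $a \mapsto q\Phi(a)q = \Phi(a) q$ is strongly continuous; (d) $q \neq 0$. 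For strong continuity (c): if $a_i \to 0$ strongly with $a_i$ contractions, we want $\Phi(a_i) q \to 0$ strongly, i.e. $\Phi(a_i) q \xi \to 0$ for each $\xi$; by density it suffices to check $\xi$ in the range of some $q_m$, i.e. $\xi = q_m \eta$. Then $\Phi(a_i) q_m \eta = \Phi(a_i \chi_{F_m}) \eta$; and $a_i \chi_{F_m} \to 0$ in norm (since $\chi_{F_m}$ is a finite-rank projection and $a_i \to 0$ strongly forces $a_i \chi_{F_m} \to 0$ in norm by compactness of the unit ball in finite dimensions), so $\Phi(a_i \chi_{F_m}) \to 0$ in norm. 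This handles (c) — the subtlety is that $q\Phi(a)q = \Phi(a)q$ requires $q$ to commute with $\Phi(a)$, so I must prove (b) first, or work with $q\Phi(a)q$ directly and show that map is a $^*$-homomorphism (which again needs $q$ central).

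For (b), the commutation of $q$ with $\Phi(\cstu(X))$: this is where I'd use the hypotheses. Note $q_n$ commutes with $\Phi(\chi_A)$ for all $A \subseteq X$ since $\chi_{F_n}$ and $\chi_A$ commute and $\Phi$ is a $^*$-homomorphism. Also $\Phi(k) q_n \to \Phi(k)$ in norm for $k \in \cK(\ell_2(X))$ under hypothesis (i) (since then $\Phi(k)$ is compact and $q_n \to q = 1$? no, not $1$ in general). Hmm — I think the correct statement is that under either hypothesis, $\Phi(\cstu(X))$ is generated by $\Phi(\ell_\infty(X)) \cup \Phi(\cK(\ell_2(X)))$, and $q$ commutes with $\Phi(\ell_\infty(X))$ as noted; it remains to show $q$ commutes with $\Phi(e_{xy})$ for all $x, y$. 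Since $e_{xy} = \chi_{\{x\}} e_{xy} \chi_{\{y\}}$-ish and for $n$ large both $x, y \in F_n$ so $\chi_{F_n} e_{xy} = e_{xy} = e_{xy}\chi_{F_n}$, hence $q_n \Phi(e_{xy}) = \Phi(e_{xy}) = \Phi(e_{xy}) q_n$ for all large $n$, giving $q \Phi(e_{xy}) = \Phi(e_{xy}) = \Phi(e_{xy}) q$; so $q$ fixes $\Phi(\cK)$ pointwise and commutes with $\Phi(\ell_\infty)$, hence with all of $\Phi(\cstu(X))$. Then $q\Phi(a)q = \Phi(a)q$, it is a $^*$-homomorphism, and it is nonzero because it fixes every $\Phi(e_{xy}) \neq 0$ (injectivity of $\Phi$). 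Finally (a): $q \in \cstu(Y)$ — under (i), $q = \mathrm{SOT}\text{-}\lim q_n$ where... actually we get $q \cdot \Phi(\cstu(X)) = \Phi(\cstu(X))$ includes $\Phi(\cK)$, and $q$ is a strong limit of an increasing sequence of projections in $\cstu(Y)$; I'd want to argue $q \in \cstu(Y)$ using Lemma \ref{L:AA} to show $1 - q$ is a "ghost-like" / genuinely an element of $\cstu(Y)$, or perhaps simply note that for uniform Roe algebras the relevant limit stays inside because $\Phi$ is compact-preserving / property A of $Y$ lets us invoke coarse-likeness.

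The main obstacle I anticipate is exactly proving $q \in \cstu(Y)$ (step (a)): a SOT-limit of an increasing sequence of projections in $\cstu(Y)$ need not lie in $\cstu(Y)$ in general. This is presumably where Lemma \ref{L:AA} and the two alternative hypotheses genuinely enter — under (i), compact-preservation of $\Phi$ forces the "difference" $\Phi(\chi_{F_{n+1}}) - \Phi(\chi_{F_n})$ to be compact with good control, letting one telescope; under (ii), property A of $Y$ combined with Theorem \ref{ThmCoarseLikeEvenForNonComp} (every strongly continuous linear map into $\cstu(Y)$ is coarse-like) or with quasi-locality criteria (Lemma \ref{LemmaQuasiLocalAndApproximable}) should let one verify $q$ is quasi-local, hence in $\cstu(Y)$ by \cite[Theorem 3.3]{SpakulaZhang2018}. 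So the proof splits into the two cases at precisely this point, and I'd handle the bookkeeping of the exhaustion $(F_n)$ via Lemma \ref{LemmaSumOfQuasiLocalSOT} / Lemma \ref{Lemma.eE2} to rule out the bad behaviour, mirroring the contradiction-style arguments already set up in Section \ref{SectionCoarseLikeQuasiLocal}.
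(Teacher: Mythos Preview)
Your overall architecture matches the paper's exactly: define $p$ as the SOT-limit of $\Phi(\chi_{F_n})$ for a finite exhaustion, show $p$ commutes with $\Phi(\cstu(X))$, show $\Phi^p = p\Phi(\cdot)p$ is strongly continuous and nonzero, and finally show $p\in\cstu(Y)$. Your arguments for (c) and (d) are correct and essentially verbatim what the paper does, and your plan for (a) in case (ii) via quasi-locality and Lemma~\ref{LemmaSumOfQuasiLocalSOT} is the paper's argument.

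There are two genuine gaps. First, your commutation argument (b) rests on the claim $\cstu(X)=\Cstar(\ell_\infty(X),\cK(\ell_2(X)))$, which is \emph{false} for general u.l.f.\ spaces (e.g.\ the bilateral shift on $\Z$ is not in that subalgebra). The fix is easy and you already have the ingredients: you correctly observe that $p$ fixes $\Phi(\cK(\ell_2(X)))$, and since for any $a\in\cstu(X)$ one has $\Phi(a)p=\mathrm{SOT}\text{-}\lim_n\Phi(a\chi_{F_n})$ with each $a\chi_{F_n}$ compact, one gets $p\Phi(a)p=\Phi(a)p$ directly, and symmetrically on the other side. No generation claim is needed.

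Second, and more seriously, your plan for (a) in case (i) --- ``compact preservation forces the differences $\Phi(\chi_{F_{n+1}})-\Phi(\chi_{F_n})$ to be compact with good control, letting one telescope'' --- does not work as stated: an SOT-convergent series of compacts has no reason to land in $\cstu(Y)$. The paper's actual argument is of a different nature. One first shows that $\SJ=\{A\subseteq X\mid \Phi(\chi_A)=\Phi^p(\chi_A)\}$ is a ccc/Fin ideal (an easy orthogonality argument using separability of $\ell_2(Y)$). Then, since $\Phi$ is compact preserving, it drops to a map $\tilde\Phi$ between uniform Roe coronas, and $\Phi^p\restriction\ell_\infty(X)$ is a lift of $\tilde\Phi\restriction\ell_\infty(X)/c_0(X)$ on the ccc/Fin ideal $\SJ$; one then invokes \cite[Proposition~3.3]{BragaFarahVignati2018} to conclude that $\Phi^p\restriction\ell_\infty(X)$ is coarse-like, whence $p=\Phi^p(1)\in\cstu(Y)$. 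This ccc/Fin~+~corona-lifting step is the missing idea in your case (i).
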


\begin{proof}
 Let $(X_n)_n$ be an increasing sequence of finite subsets of $X$ so that $X=\bigcup_nX_n$. Since $(\Phi(\chi_{X_n}))_n$ is an increasing sequence of projections bounded above by the projection $\Phi(1)$, we can define a projection
\[p=\mathrm{SOT}\text{-}\lim_n\Phi(\chi_{X_n}).\]
Since $\Phi$ is injective, $p\neq 0$ and $\Phi(1)\geq p$.

\begin{claim}
$p$ commutes with $\Phi(\cstu(X))$.
\end{claim}

\begin{proof}
First  notice that $p$ commutes with $\Phi(\cK(\ell_2(X)))$. For this, it is enough to show that $p$ commutes with the image $\cB(\ell_2(X_n))$, for all $n$. Fix then $a=\chi_{X_n}a\chi_{X_n}=a$. Since $p\Phi(\chi_{F})=\Phi(\chi_F)$ whenever $F\subseteq X$ is finite, we are done.

We now show the general statement. Let $a\in \cstu(X)$. By the previous paragraph, $\Phi(a\chi_{X_n})p=p\Phi(a\chi_{X_n})$ for all $n\in\N$. Hence, it follows that 
\[
\Phi(a)p=\Phi(a)p^2=\mathrm{SOT}\text{-}\lim_n\Phi(a\chi_{X_n})p=\mathrm{SOT}\text{-}\lim_np\Phi (a\chi_{X_n})=p\Phi(a)p
\]
Similarly, we have that $p\Phi(a)=p\Phi(a)p$, so $p\Phi(a)=\Phi(a)p$, and we are done.
\end{proof}
Let $\Phi^p=p\Phi p$. By the claim, $\Phi^p$ is a $^*$-homomorphism.

\begin{claim}
$\Phi^p$ is strongly continuous.
\end{claim}

\begin{proof}
Let $a\in \cstu(X)$  and $(a_k)_k$ be a sequence in $\cstu(X)$ converging to $a$ in the strong operator topology. We can assume $a$ and each $a_k$ are contractions. We want to show that for every $\xi\in\ell_2(Y)$ we have that  
\[
\Phi^p(a)\xi=\lim_k\Phi^p(a_k)\xi.
\]
Let $H=\mathrm{Im}(p)$. Since $\Phi^p(\cstu(X))[H^{\perp}]=0$, we can assume that $\xi\in H$. Let $\varepsilon>0$, and let $\ell\in \N$ be large enough so that $\|\Phi(\chi_{X_\ell})\xi-\xi\|<\varepsilon$.
 Since $X_\ell$ is finite, $a\chi_{X_\ell}=\lim_ka_k\chi_{X_\ell}$, which implies that $\Phi^p(a \chi_{X_\ell})=\lim_k\Phi^p(a_k\chi_{X_\ell})$. Since $\Phi$ and $\Phi^p$ agree on $\mathcal K(\ell_2(X))$, we have that  
\[
\limsup_k\|\Phi^p(a_k)\xi-\Phi^p(a)\xi\|\leq \lim_k\|\Phi^p(a_k \chi_{X_\ell})\xi-\Phi^p(a \chi_{X_\ell})\xi\|+2\varepsilon=2\varepsilon.
\]
Since $\varepsilon$ is arbitrary we have the thesis.
\end{proof}

\begin{claim}\label{ClaimcccOverFin}
$\SJ=\{A\subseteq X\mid \Phi(\chi_A)=\Phi^{p}(\chi_A)\}$ is a ccc/Fin ideal\footnote{An ideal $\SI\subseteq\mathcal P(X)$ is ccc/Fin if every family of almost disjoint sets which are not in $\SI$ is countable.}.
\end{claim}

\begin{proof}
As $\Phi(a)=p\Phi(a)p+(1-p)\Phi(a)(1-p)$ for all $a\in\cstu(X)$ we have that $\SJ=\{A\subseteq X\mid \Phi^{1-p}(\chi_A)=0\}$. Let $(A_i)_{i\in I}$ be a uncountable family of infinite subsets of $X$ which is almost disjoint, i.e., $A_i\cap A_j$ is finite for all $i\neq j$. Since $\Phi^{1-p}(\chi_F)=0$ for all finite $F\subseteq X$ and $\Phi^{1-p}$ is a $^*$-homomorphism, we have that $(\Phi^{1-p}(\chi_{A_i}))_{i\in I}$ is an uncountable family of orthogonal projections on $\ell_2(Y)$. Since $Y$ is countable, this implies that $\{i\in I\mid \Phi^{1-p}(\chi_{A_i})= 0\}$ is uncountable, in particular, nonempty.  
\end{proof}

We are left to show that $p\in\cstu(Y)$ if $\Phi$, or $Y$, are as in the hypotheses.

(i) Given a u.l.f. metric space $Z$, we define the \emph{uniform Roe corona of $Z$} by $\roeq(Z)=\cstu(Z)/\cK(\ell_2(Z))$ and let $\pi_Z\colon \cstu(Z)\to\roeq(Z)$ denote the quotient map.\footnote{We refer the reader to \cite{BragaFarahVignati2018} for a detailed study of uniform Roe coronas.} If $\Phi(\cK(\ell_2(X)))\subseteq\cK(\ell_2(Y))$, then $\Phi$ induces a $^*$-homomorphism 
\[
\tilde\Phi\colon \roeq(X)\to \roeq(Y),
\]
where $\Phi^p\restriction \ell_\infty(X)$ lifts $\tilde \Phi\restriction \ell_\infty(X)/c_0(X)$ on $\SJ$, i.e., 
\[
\tilde\Phi\circ\pi_X(\chi_A)=\pi_Y\circ\Phi^p(\chi_A)
\] 
for all $A\in \SJ$. Since $\SJ$ is ccc/Fin, \cite[Proposition 3.3]{BragaFarahVignati2018} implies that  $\Phi^p\restriction \ell_\infty(X)$ is coarse-like, hence the image of $\Phi^p$  is contained in $\cstu(Y)$, and so is $p=\Phi^p(1)$.

(ii) Suppose $Y$ has property A and suppose that $p=\Phi^p(1)\not\in \cstu(Y)$. By \cite[Theorem 3.3]{SpakulaZhang2018}, there exists $\varepsilon>0$ so that for all $s>0$ there exist $B,D\subseteq Y$ such that $\partial(B,D)>s$ and $\|\chi_Dp\chi_B\|>2\varepsilon$. Without loss of generality, $B$ and $D$ can be taken to be finite.

\begin{claim}\label{ClaimClaim1}
For all finite  $E_0\subseteq X$ and all $s>0$ there exists a finite $E\subseteq X\setminus E_0$ such that $\Phi^p(\chi_{ E})$ is not $\varepsilon$-$s$-quasi-local.
\end{claim}

\begin{proof}
If not,  fix offenders $E_0\subseteq X$ and $s>0$.   So, $\|\chi_D\Phi^p(\chi_{ E})\chi_B\|\leq \varepsilon$ for all finite $E\subseteq X\setminus E_0$ and all $B,D\subseteq Y$ with $d_Y(B,D)>s$. Since $E_0$ is finite, $\Phi^p(\chi_{E_0})\in \cstu(Y)$ and, by enlarging $s$ if necessary,  we can assume that $\Phi^p(\chi_{E_0})$ is $\varepsilon$-$s$-approximated. Let $\text{Fin}(X\setminus E_0)=\{E\subseteq X\setminus E_0\mid |E|<\infty\}$. Since $\Phi^p$ is strongly continuous and $p=\Phi^p(1)$, it follows that 
\[\|\chi_Dp\chi_B\|\leq\sup_{E\in\text{Fin}(X\setminus E_0)} \Big(\|\chi_D\Phi^p(\chi_{E_0})\chi_B\|+\|\chi_D\Phi^p(\chi_{  E})\chi_B\|\Big)\leq 2\varepsilon\]
for all finite $B,D\subseteq Y$ with $d_Y(B,D)>s$; contradiction.
\end{proof}

By the previous claim, we can pick a disjoint sequence $(E_n)_n$ of finite subset of $X$ such that $(\Phi^p(\chi_{E_n}))_n$ is not $\varepsilon$-$n$-quasi-local for all $n\in\N$. However, since $(\Phi^p(\chi_{E_n}))_n$ satisfies the hypothesis of Lemma \ref{LemmaSumOfQuasiLocalSOT}, so $\Phi^p(\chi_{E_n})$ is  $\varepsilon$-$n$-quasi-local for some $n\in\N$; contradiction.
\end{proof}

The following is a trivial consequence of Theorem \ref{ThmEmbImpliesStrongEmb}.

\begin{corollary}\label{CorMakeCompPreservEmbStrongCont}
 Let $X$ and $Y$ be u.l.f. metric spaces. The following holds.
 \begin{enumerate}
 \item If $\cstu(X)$ embeds into $\cstu(Y)$ by a compact preserving map, then $\cstu(X)$ embeds into $\cstu(Y)$ by a compact preserving strongly continuous map.
 \item If $Y$ has property A and  $\cstu(X)$ embeds into $\cstu(Y)$, then $\cstu(X)$ embeds into $\cstu(Y)$ by a strongly continuous map. \qed
 \end{enumerate}
\end{corollary}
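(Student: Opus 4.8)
The plan is to obtain both items as immediate corollaries of Theorem~\ref{ThmEmbImpliesStrongEmb}, so the argument is essentially bookkeeping. Starting from the given embedding $\Phi\colon\cstu(X)\to\cstu(Y)$, I would apply Theorem~\ref{ThmEmbImpliesStrongEmb} --- invoking hypothesis~(i) for item~(1), since $\Phi$ is compact preserving, and hypothesis~(ii) for item~(2), since $Y$ has property~A --- to produce a projection $p\in\cstu(Y)$ commuting with $\Phi(\cstu(X))$ such that $\Phi^p\colon a\mapsto p\Phi(a)p$ is a nonzero strongly continuous $^*$-homomorphism from $\cstu(X)$ into $\cstu(Y)$. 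What must still be checked is that $\Phi^p$ is injective (so that it is an embedding) and, for item~(1), that it remains compact preserving.

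The injectivity is the only step with any content, and it is exactly the fact recorded at the start of \S\ref{SectionMakingStrongCont}. Namely, $\cK(\ell_2(X))$ is a simple ideal of $\cstu(X)$ that is essential --- if $a\cK(\ell_2(X))=0$ then $ae_{xx}=0$ for every $x\in X$, so $a=0$ --- and strongly dense, because for any exhaustion $X=\bigcup_n X_n$ by finite sets the compact projections $\chi_{X_n}$ converge to $1$ in the strong operator topology. Hence any nonzero closed two-sided ideal of $\cstu(X)$ meets $\cK(\ell_2(X))$ nontrivially by essentiality and therefore contains it by simplicity; a strongly continuous $^*$-homomorphism with nontrivial kernel would then annihilate the strongly dense set $\cK(\ell_2(X))$, and hence be identically zero. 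Since $\Phi^p$ is nonzero, $\ker\Phi^p=0$, so $\Phi^p$ is an embedding. This already proves item~(2).

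For item~(1) it then suffices to note that $\Phi^p$ inherits compact preservation from $\Phi$: if $a\in\cstu(X)$ is compact, then $\Phi(a)\in\cK(\ell_2(Y))$ by hypothesis, whence $\Phi^p(a)=p\Phi(a)p\in\cK(\ell_2(Y))$. Thus $\Phi^p$ is a compact preserving strongly continuous embedding, which is item~(1). I do not expect any real obstacle here; the one substantive ingredient, the injectivity of $\Phi^p$, is precisely the folklore observation quoted just before Theorem~\ref{ThmEmbImpliesStrongEmb}, and everything else is formal.
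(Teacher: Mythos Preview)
Your proposal is correct and follows exactly the approach the paper takes: the paper simply declares the corollary a ``trivial consequence of Theorem~\ref{ThmEmbImpliesStrongEmb}'' and omits the proof, while you have spelled out the bookkeeping (injectivity of $\Phi^p$ via the minimality and strong density of $\cK(\ell_2(X))$, and compact preservation by compression). There is nothing to add.
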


\begin{remark}
If the assumption of property A is not necessary in   Theorem~\ref{ThmCoarseLikeEvenForNonComp} then  every strongly continuous $^*$-homomorphism $\ell_\infty\to\cstu(Y)$ is coarse-like. Given this, the projection $p$ defined in Theorem~\ref{ThmEmbImpliesStrongEmb} would automatically belong to $\cstu(Y)$. For  this, it is enough to show the following weak version of Lemma~\ref{LemmaSumOfQuasiLocalSOT}: If $a_i$ are orthogonal projections in $\cstu(Y)$ such that for each $(\lambda_i)_{i\in\N}\in\mathbb D^\N$ the element $\sum\lambda_ia_i\in\cstu(Y)$, then for every $\varepsilon>0$ there is $m$ such that each $a_i$ can be $\varepsilon$-$m$-approximated. We were unable to prove this  without property A.
See also Problem~\ref{ProblemPropAEmbHilb} and the paragraph following it. 
\end{remark}

\subsection{Almost coarse-like maps}\label{SubsectionAlmostCoarseLike}
In \S\ref{SectionCoarseLikeQuasiLocal}, we studied the coarse-like property for strongly continuous maps $\Phi\colon\cstu(X)\to \cstu(Y)$. In this subsection, we introduce a weaker property, the almost coarse-like property, and prove some results about it. Unlike in the coarse-like setting, this weaker property allows us to forget the strong continuity condition.

\begin{definition}\label{Def.AlmostCoarseLike}
Let $X$ and $Y$ be  metric spaces, $\bA\subseteq \cstu(X)$ and $\Phi\colon \bA\to \cstu(Y)$ be a map. We say that $\Phi$ is \emph{almost coarse-like} if for all $m\in\N$ and all $\varepsilon>0$ there exists $k\in \N$ such that $\Phi(a)$ can be $\varepsilon$-$k$-approximated for every contraction $a\in \bA$ with finite support and so that $\propg(a)\leq m$.
\end{definition}

\begin{proposition}\label{PropAlmostCoarseLike}
Let $X$ and $Y$ be u.l.f. metric spaces and  $\Phi\colon \cstu(X)\to \cstu(Y)$ be a compact preserving  $^*$-homomorphism. Then $\Phi$ is almost coarse-like.
\end{proposition}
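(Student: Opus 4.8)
The plan is to derive this from the coarse-like property of strongly continuous compact preserving maps (Proposition~\ref{PropCoarseLike}) together with the corner decomposition supplied by Theorem~\ref{ThmEmbImpliesStrongEmb}, after first peeling off a trivial case. The guiding observation is that the behaviour of $\Phi$ on a \emph{finitely supported} operator is determined entirely by $\Phi\restriction\cK(\ell_2(X))$, since every finitely supported $a\in\cstu(X)$ is a finite linear combination $a=\sum_{(x,x')\in\supp(a)}\langle a\delta_x,\delta_{x'}\rangle e_{xx'}$ of the rank-one operators $e_{xx'}\in\cK(\ell_2(X))$.

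First I would dichotomise on $\Phi\restriction\cK(\ell_2(X))$. As $\cK(\ell_2(X))$ is simple, this restriction is either $0$ or injective. If it is $0$, then $\Phi$ annihilates every finitely supported operator, and so $\Phi$ is almost coarse-like with $k=0$ witnessing every pair $(m,\varepsilon)$; that case is done.

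In the remaining case $\Phi\restriction\cK(\ell_2(X))$ is injective. Since $\cK(\ell_2(X))$ is an essential ideal of $\cstu(X)$ (if $a\cK(\ell_2(X))=0$ then $a\delta_x=ae_{xx}\delta_x=0$ for all $x$), the ideal $\ker\Phi$ must be $0$, so $\Phi$ is an embedding and Theorem~\ref{ThmEmbImpliesStrongEmb}(i) applies: there is a projection $p\in\cstu(Y)$ commuting with $\Phi(\cstu(X))$ such that $\Phi^p\colon a\mapsto p\Phi(a)p$ is a strongly continuous $^*$-homomorphism $\cstu(X)\to\cstu(Y)$. Moreover $\Phi^p$ is compact preserving (as $p\in\cstu(Y)$ and the compacts form an ideal of $\cstu(Y)$), so Proposition~\ref{PropCoarseLike} gives that $\Phi^p$ is coarse-like. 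Put $\Phi^{1-p}\colon a\mapsto(1-p)\Phi(a)(1-p)$, which is again a $^*$-homomorphism because $1-p$ commutes with $\Phi(\cstu(X))$, so that $\Phi=\Phi^p+\Phi^{1-p}$. The key point is that $\Phi^{1-p}$ vanishes on every finitely supported operator: the projection $p$ constructed in the proof of Theorem~\ref{ThmEmbImpliesStrongEmb} satisfies $p\Phi(\chi_F)=\Phi(\chi_F)$ for every finite $F\subseteq X$, hence $\Phi^{1-p}(\chi_{\{x\}})=0$ for every $x$, and then $\Phi^{1-p}(e_{xx'})=\Phi^{1-p}(e_{x'x'})\,\Phi^{1-p}(e_{xx'})\,\Phi^{1-p}(e_{xx})=0$ using $e_{xx'}=e_{x'x'}e_{xx'}e_{xx}$ and $e_{xx}=\chi_{\{x\}}$. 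Therefore $\Phi(a)=\Phi^p(a)$ for every finitely supported $a\in\cstu(X)$. Now coarse-likeness of $\Phi^p$ yields, for each $m$ and each $\varepsilon>0$, a $k$ so that $\Phi^p(a)$ — hence $\Phi(a)$ — can be $\varepsilon$-$k$-approximated for every contraction $a$ with $\propg(a)\le m$; restricting to finitely supported such $a$ shows $\Phi$ is almost coarse-like.

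The only step requiring genuine care is the opening dichotomy — recognising that a compact preserving $^*$-homomorphism which does not kill $\cK(\ell_2(X))$ is automatically injective, so that Theorem~\ref{ThmEmbImpliesStrongEmb} is available — followed by the verification that the complementary corner $\Phi^{1-p}$ is invisible to finitely supported operators; the rest is bookkeeping, and no hypothesis beyond those already assumed in Proposition~\ref{PropCoarseLike} and Theorem~\ref{ThmEmbImpliesStrongEmb} is needed.
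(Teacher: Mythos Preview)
Your argument is correct and follows essentially the same route as the paper's proof: dichotomise on whether $\Phi$ kills the compacts (equivalently, whether $\Phi$ is injective), dispose of the trivial case, then in the injective case invoke Theorem~\ref{ThmEmbImpliesStrongEmb} to obtain the strongly continuous corner $\Phi^p$, apply Proposition~\ref{PropCoarseLike} to conclude $\Phi^p$ is coarse-like, and finish by noting $\Phi$ and $\Phi^p$ agree on finitely supported operators. Your added justification that $\Phi^{1-p}$ vanishes on finite-rank operators (via $p\Phi(\chi_F)=\Phi(\chi_F)$ for finite $F$) makes explicit what the paper leaves implicit.
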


\begin{proof}
If $\Phi$ is noninjective, then the kernel of $\Phi$ contains all compact operators, and in particular all operator of finite-dimensional range. Therefore $\Phi(a)=0$ whenever $a$ has finite-dimensional range, and $\Phi$ is obviously almost coarse-like. If $\Phi$ is injective, let $X_n$ be finite sets with $X_n\subseteq X_{n+1}$ and $\bigcup X_n=X$. Let $p=\mathrm{SOT}\text{-}\lim\Phi(\chi_{X_n})$, and $\Phi^p(a)=p\Phi(a)p$ for all $a\in \cstu(X)$.  By Theorem~\ref{ThmEmbImpliesStrongEmb},  $\Phi^p$ is strongly-continuous. Hence,  Proposition~\ref{PropCoarseLike} implies that $\Phi^p$ is coarse-like.  Since  $\Phi^p$ and $\Phi$ agree on the set of finite support elements, the thesis follows.
\end{proof}

The proof of the next proposition is analogous to the proof of Proposition \ref{PropAlmostCoarseLike}, but with Theorem \ref{ThmCoarseLikeEvenForNonComp} being used instead of Proposition \ref{PropCoarseLike}.

\begin{proposition}\label{PropAlmostCoarseLikePropA}
Let $X$ and $Y$ be u.l.f. metric spaces and  $\Phi\colon \cstu(X)\to \cstu(Y)$ be a    $^*$-homomorphism. If $Y$ has property A, then $\Phi$ is almost coarse-like.\qed
\end{proposition}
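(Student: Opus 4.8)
The plan is to mimic the proof of Proposition~\ref{PropAlmostCoarseLike} verbatim, replacing each use of the compact-preserving hypothesis and of Proposition~\ref{PropCoarseLike} by the corresponding property~A statements. First I would dispose of the noninjective case: if $\Phi$ is not injective, then $\ker\Phi$ is a nonzero ideal of $\cstu(X)$, hence contains $\cK(\ell_2(X))$ (since $\cK(\ell_2(X))$ is the unique minimal ideal), so $\Phi$ kills every finite-support operator and is trivially almost coarse-like. Assuming now $\Phi$ is injective, I would fix an increasing sequence $(X_n)_n$ of finite subsets with $\bigcup_nX_n=X$, form the projection $p=\mathrm{SOT}\text{-}\lim_n\Phi(\chi_{X_n})$, and set $\Phi^p(a)=p\Phi(a)p$ exactly as in Theorem~\ref{ThmEmbImpliesStrongEmb}.

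Next, since $Y$ has property~A, case~(ii) of Theorem~\ref{ThmEmbImpliesStrongEmb} applies: it gives that $p\in\cstu(Y)$, that $p$ commutes with $\Phi(\cstu(X))$, and that $\Phi^p\colon\cstu(X)\to\cstu(Y)$ is a strongly continuous $^*$-homomorphism (nonzero by injectivity of $\Phi$). Then Theorem~\ref{ThmCoarseLikeEvenForNonComp}, applied to the strongly continuous linear map $\Phi^p$ (again using property~A of $Y$), yields that $\Phi^p$ is coarse-like. Finally I would observe that $\Phi$ and $\Phi^p$ agree on finite-support operators: indeed $p\Phi(\chi_F)=\Phi(\chi_F)$ for every finite $F\subseteq X$, so for $a=\chi_{X_n}a\chi_{X_n}$ we get $\Phi^p(a)=p\Phi(\chi_{X_n})\Phi(a)\Phi(\chi_{X_n})p=\Phi(\chi_{X_n})\Phi(a)\Phi(\chi_{X_n})=\Phi(a)$, and every finite-support contraction of propagation at most $m$ lies in some $\cB(\ell_2(X_n))$. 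Hence the coarse-like estimate for $\Phi^p$ transfers to the required estimate for $\Phi$ on finite-support contractions of bounded propagation, which is precisely the almost coarse-like condition.

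There is essentially no obstacle here — the proposition is stated as a routine variant and the work has been done in the cited results. The only point requiring a moment's care is checking that the hypotheses of Theorem~\ref{ThmEmbImpliesStrongEmb}(ii) and of Theorem~\ref{ThmCoarseLikeEvenForNonComp} are genuinely met: the former needs $\Phi$ to be an embedding (which, after reducing to the injective case, it is, since a $^*$-homomorphism between $\cstar$-algebras is isometric onto its image) and $Y$ to have property~A; the latter needs only strong continuity and linearity of $\Phi^p$ together with property~A of $Y$. Both are in hand, so the proof is a direct substitution into the argument for Proposition~\ref{PropAlmostCoarseLike}, which is why it is given as \qed in the excerpt.
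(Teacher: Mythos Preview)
Your proposal is correct and follows exactly the approach the paper intends: the paper's proof is the one-line remark that the argument is analogous to that of Proposition~\ref{PropAlmostCoarseLike}, with Theorem~\ref{ThmCoarseLikeEvenForNonComp} used in place of Proposition~\ref{PropCoarseLike}, which is precisely what you carry out (invoking case~(ii) of Theorem~\ref{ThmEmbImpliesStrongEmb} in the injective case to obtain $p\in\cstu(Y)$ and strong continuity of $\Phi^p$).
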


\section{Embeddings and geometry preservation}\label{SectionEmbGeomPreserv}

In this section, we prove the nontrivial part of Theorem \ref{ThmRigidityUniformRoeAlgEmbeddingsFINITEUNION} and as a consequence  obtain Corollary \ref{CorRigidityUniformRoeAlgEmbeddingsAsymDim}.

\begin{lemma}\label{LemmaPickMapf}
Let $X$ and $Y$ be  metric spaces and $\Phi\colon \cstu(X)\to\cstu(Y)$ be a compact preserving   embedding. Assume that every sparse subspace of $Y$ yields only compact ghost projections. Then 
\[\delta=\inf_{x\in X}\sup_{y\in Y}\|\Phi(e_{xx})e_{yy}\|>0.\]
Moreover, if $f\colon X\to Y$ is a map such that $\|\Phi(e_{xx})e_{f(x)f(x)}\|\geq \delta$ for all $x\in X$, then $f$ is $\lfloor \delta^{-2}\rfloor$-to-one. 
\end{lemma}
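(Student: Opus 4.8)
The plan is to argue by contradiction for the first (positive lower bound) statement, using the hypothesis on sparse subspaces, and then to do a counting argument for the "finite-to-one" claim. For the first part: suppose $\delta = 0$. Then there is a sequence $(x_n)_n$ of distinct points of $X$ with $\sup_{y\in Y}\|\Phi(e_{x_nx_n})e_{yy}\| \to 0$. By Corollary \ref{CorMakeCompPreservEmbStrongCont} we may assume $\Phi$ is also strongly continuous. Write $p_n = \Phi(e_{x_nx_n})$; these are pairwise orthogonal nonzero projections in $\cstu(Y)$ (nonzero by injectivity of $\Phi$), and the condition $\sup_{y}\|p_n e_{yy}\| \to 0$ says exactly that $\langle p_n\delta_y,\delta_y\rangle \to 0$ uniformly in $y$ as $n\to\infty$; so the projection $p = \mathrm{SOT}\text{-}\sum_n p_n \in \cstu(Y)$ has the property that its diagonal entries $\langle p\delta_y,\delta_y\rangle$ are small off a finite set — more precisely, for each $\e>0$ all but finitely many $p_n$ have all diagonal entries below $\e$, and since $\cstu(Y)$ operators are "diagonally bounded" one can arrange that $p$ restricted to the support of the tail is a ghost projection. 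The cleaner route: pass to a sparse subspace. Using that $(p_n)_n$ are orthogonal projections in $\cstu(Y)$, a standard argument (reindexing and using that each $p_n$ is approximable by a finite-propagation operator together with the fact that $Y$ is u.l.f.) produces a sparse subspace $Z\subseteq Y$ and a subsequence such that $q = \mathrm{SOT}\text{-}\sum_k p_{n_k}$ compresses to a noncompact ghost projection in $\cstu(Z)$, contradicting the hypothesis. (Noncompactness is because $q$ is an infinite sum of nonzero orthogonal projections; the ghost property is because each diagonal block contributes arbitrarily small diagonal mass as $k\to\infty$ and the off-diagonal decay follows from quasi-locality/coarse-likeness of $\Phi$ restricted to $\ell_\infty(X)$ via Theorem \ref{ThmCoarseLikeEvenForNonComp} or Proposition \ref{PropCoarseLike}.) This contradiction gives $\delta > 0$.

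For the "moreover" clause, fix a map $f$ with $\|\Phi(e_{xx})e_{f(x)f(x)}\| \geq \delta$ for all $x$, and fix $y\in Y$. Suppose $x_1,\ldots,x_N$ are distinct points of $X$ with $f(x_i) = y$ for all $i$. The projections $\Phi(e_{x_ix_i})$ are pairwise orthogonal (since $e_{x_ix_i}$ are), so for the vector $\delta_y\in\ell_2(Y)$ we have
\[
1 = \|\delta_y\|^2 \geq \sum_{i=1}^N \|\Phi(e_{x_ix_i})\delta_y\|^2.
\]
Now $\|\Phi(e_{x_ix_i})\delta_y\| = \|\Phi(e_{x_ix_i})e_{f(x_i)f(x_i)}\delta_y\|$; since $\Phi(e_{x_ix_i})$ is a projection, $\|\Phi(e_{x_ix_i})e_{yy}\|$ equals the norm of the vector $\Phi(e_{x_ix_i})\delta_y$ (because $e_{yy}$ has rank one with range $\C\delta_y$, so $\|\Phi(e_{x_ix_i})e_{yy}\| = \|\Phi(e_{x_ix_i})\delta_y\|$). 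Hence each summand is at least $\delta^2$, giving $N\delta^2 \leq 1$, i.e. $N \leq \delta^{-2}$, so $N \leq \lfloor\delta^{-2}\rfloor$. As $y$ was arbitrary, $f$ is $\lfloor\delta^{-2}\rfloor$-to-one.

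The main obstacle is the first part: producing a genuine ghost \emph{projection} (not merely a ghost operator) on a sparse subspace from the assumption $\delta = 0$. The subtlety is that $\mathrm{SOT}\text{-}\sum_n p_n$ need not itself live in $\cstu(Y)$ a priori, and even when it does, extracting a sparse subspace $Z$ on which it compresses to something that is simultaneously (a) a projection in $\cstu(Z)$, (b) noncompact, and (c) a ghost requires care — one must choose the subsequence so that the "interaction" blocks between the chosen $p_{n_k}$'s are negligible, which is where the coarse-like/quasi-local control on $\Phi\rs\ell_\infty(X)$ (available from §\ref{SectionCoarseLikeQuasiLocal} once we have made $\Phi$ strongly continuous via Theorem \ref{ThmEmbImpliesStrongEmb}) does the work. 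I expect the formal write-up to proceed by: reduce to strongly continuous $\Phi$; observe $p_n$ orthogonal; use almost/coarse-likeness to $\e$-$k$-approximate each $p_n$ by finite propagation operators; thin out to get the supports spreading to infinity in $Y$, defining the sparse decomposition; and finally verify the three properties of the compression.
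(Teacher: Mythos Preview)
Your proposal is correct and follows essentially the same line as the paper. The paper's own proof is terser: it enumerates $X=\{x_n\}$, notes via Claim~\ref{ClaimcccOverFin} that the set of $L\subseteq\N$ on which $\Phi$ commutes with SOT-sums is a ccc/Fin ideal, and then invokes \cite[Proposition~4.1]{BragaFarahVignati2018} for the finite-rank projections $p_n=\Phi(e_{x_nx_n})$ as a black box---that proposition is precisely the ``produce a noncompact ghost projection on a sparse subspace from $\sup_y\|p_n e_{yy}\|\to 0$'' argument you sketch. Your reduction to a strongly continuous $\Phi$ via Corollary~\ref{CorMakeCompPreservEmbStrongCont} serves the same purpose as the paper's ccc/Fin manoeuvre (the paper even footnotes that one passes to $L$ in the ideal before running the cited proposition), and your counting argument for the ``moreover'' clause is identical to the paper's. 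One small point worth making explicit in your write-up: the compact-preserving hypothesis is what guarantees each $p_n$ has \emph{finite rank}, which is what lets you localize them onto finite subsets $Y_k\subseteq Y$ and thereby manufacture the sparse $Z=\bigsqcup_k Y_k$; also, the compression $\chi_Z q\chi_Z$ is not literally a projection, so the actual endgame uses functional calculus on an almost-projection, as you implicitly acknowledge with ``interaction blocks negligible''.
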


\begin{proof}
Since $X$ is countable, enumerate $X$, say $X=\{x_n\mid n\in\N\}$. By Claim~\ref{ClaimcccOverFin}, the set
\[\SI=\Big\{L\subseteq \N\mid \Phi\Big(\sum_{n\in L}	e_{x_nx_n}\Big)=\mathrm{SOT}\text{-}\sum_{n\in L}\Phi(e_{x_nx_n})\Big\}\]
is a ccc/Fin ideal.  For each $n\in\N$, let $p_n=\Phi(e_{x_nx_n})$. The result now follows straightforwardly from     \cite[Proposition 4.1]{BragaFarahVignati2018} applied to the sequence of finite rank projections $(p_n)_n$.\footnote{Precisely, the proof of \cite[Proposition 4.1]{BragaFarahVignati2018} starts by taking a an infinite $L\subseteq \N$ and working with the subsequence $(p_n)_{n\in L}$. Since $\SI$ is ccc/Fin, going  to a further subsequence we can suppose  that $L\in \SI$. The proof now holds verbatim.}

Let $f\colon X\to Y$ be a map such that $\|\Phi(e_{xx})e_{f(x)f(x)}\|\geq \delta$ for all $x\in X$. Let $y\in f(X)$ and $F\subseteq X$ be such that $f(x)=y$ for all $x\in F$. Since $(\Phi(e_{xx})\delta_y)_{x\in X}$ are orthogonal, it follows that 
\[1\geq\Big\|\sum_{x\in F}\Phi(e_{xx})\delta_y\Big\|^2=\sum_{x\in F}\|\Phi(e_{xx})\delta_y\|^2\geq \delta^2|F|.\]
Since $\sum_{x\in F}\Phi(e_{xx})$ is a contraction, this implies that $|F|\leq \delta^{-2}$.
\end{proof}

\begin{proof}[Proof of Theorem \ref{ThmRigidityUniformRoeAlgEmbeddingsFINITEUNION} \eqref{T12.1}]
By Corollary \ref{CorMakeCompPreservEmbStrongCont}, there exists a compact preserving strongly continuous embedding $\Phi\colon \cstu(X)\to \cstu(Y)$. Let $\delta>0$ and $f\colon X\to Y$ be the uniformly finite-to-one map given by Lemma \ref{LemmaPickMapf}. 

Fix  $x_0\in X$. Since $\Phi$ is compact preserving, $\text{Im}(\Phi(e_{x_0x_0}))$ is finite dimensional. By our choice of $\delta $ and $f$, $\|\Phi(e_{xx})\delta_{f(x)}\|>\delta$ for all $n\in\N$. Therefore, since $\Phi(e_{xx})\delta_{f(x)}\in \text{Im}(\Phi(e_{xx}))$ and $\Phi(e_{xx_0})\restriction\text{Im}(\Phi(e_{xx}))$ is an isometry onto $\text{Im}(\Phi(e_{x_0x_0}))$, we have that 
\[\Phi(e_{xx_0})\delta_{f(x)}\in D=\big\{\xi\in \ell_2(Y)\mid  \|\xi\|\in [\delta,1]\big\}\cap \text{Im}(\Phi(e_{x_0x_0}))\]
for all $x\in X$. By  compactness of $D$,  there exist $k\in\N$ and a partition $D=\bigsqcup_{n=1}^kD_n$ such that $\diam(D_n)<\delta$ for all $n\in\{1,\ldots,k\}$. For each $n\in\{1,\ldots, k\}$, let
\[X_n=\{x\in X\mid \Phi(e_{xx_0})\delta_{f(x)}\in D_n\}.\]
Clearly, $X=\bigsqcup_{n=1}^k X_n$ and 
\begin{equation}\label{TagAst1}
|\langle \Phi(e_{x_1x_0})\delta_{f(x_1)},\Phi(e_{x_2x_0})\delta_{f(x_2)}\rangle|>\frac{\delta^2}{2}\tag{$*$}
\end{equation}
for all $n\in\{1,\ldots,k\}$ and all $x_1,x_2\in X_n$.

\begin{claim}
For each $n\in\{1,\ldots,k\}$, the map $f\restriction X_n$ is coarse.
\end{claim}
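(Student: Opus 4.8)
The plan is to show that $f \restriction X_n$ maps points that are close in $X$ to points that are close in $Y$, by contradiction, using the coarse-like property of $\Phi$ (which we have via Proposition~\ref{PropCoarseLike}, since $\Phi$ is compact preserving and strongly continuous). Fix $n \leq k$ and $m \in \N$; I want to bound $\partial(f(x_1), f(x_2))$ uniformly over $x_1, x_2 \in X_n$ with $d(x_1, x_2) \leq m$. Suppose not: then there is a sequence of pairs $x_1^{(j)}, x_2^{(j)} \in X_n$ with $d(x_1^{(j)}, x_2^{(j)}) \leq m$ but $\partial(f(x_1^{(j)}), f(x_2^{(j)})) \to \infty$. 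The key observation is that the operator $e_{x_1 x_2} \in \cstu(X)$ has propagation at most $m$, so $\Phi(e_{x_1 x_2})$ can be $\varepsilon$-$k_0$-approximated for a fixed $k_0 = k_0(\varepsilon, m)$ independent of the pair, by coarse-likeness.

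Next I would connect the quantity in \eqref{TagAst1} to a matrix coefficient of $\Phi(e_{x_1 x_2})$. Write $e_{x_1 x_2} = e_{x_1 x_0} \cdot e_{x_0 x_0} \cdot (e_{x_2 x_0})^*$ in the appropriate sense — more precisely, $e_{x_1 x_2} = e_{x_2 x_0}^* e_{x_1 x_0}$ after adjusting indices, so that $\Phi(e_{x_1 x_2}) = \Phi(e_{x_2 x_0})^* \Phi(e_{x_1 x_0})$ since $\Phi$ is a $^*$-homomorphism. Then
\[
\langle \Phi(e_{x_1 x_2}) \delta_{f(x_1)}, \delta_{f(x_2)} \rangle = \langle \Phi(e_{x_1 x_0}) \delta_{f(x_1)}, \Phi(e_{x_2 x_0}) \delta_{f(x_2)} \rangle,
\]
which by \eqref{TagAst1} has absolute value bigger than $\delta^2/2$ for $x_1, x_2 \in X_n$. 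So $\Phi(e_{x_1 x_2})$ has a matrix coefficient of size $> \delta^2/2$ at the entry $(f(x_1), f(x_2))$.

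Now I choose $\varepsilon < \delta^2/4$ and let $k_0 = k_0(\varepsilon, m)$ be the corresponding approximation bound from coarse-likeness. For each pair in our sequence, pick $b^{(j)}$ with $\propg(b^{(j)}) \leq k_0$ and $\|\Phi(e_{x_1^{(j)} x_2^{(j)}}) - b^{(j)}\| \leq \varepsilon$. Then the $(f(x_1^{(j)}), f(x_2^{(j)}))$-entry of $b^{(j)}$ has absolute value $> \delta^2/2 - \varepsilon > \delta^2/4 > 0$, which forces $\partial(f(x_1^{(j)}), f(x_2^{(j)})) \leq k_0$ — contradicting $\partial(f(x_1^{(j)}), f(x_2^{(j)})) \to \infty$. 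Hence $\sup\{\partial(f(x_1), f(x_2)) \mid x_1, x_2 \in X_n,\ d(x_1,x_2) \leq m\} < \infty$, i.e., $f \restriction X_n$ is coarse.

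The main obstacle is getting the algebraic identity $\Phi(e_{x_1 x_2}) = \Phi(e_{x_2 x_0})^* \Phi(e_{x_1 x_0})$ exactly right with the paper's indexing convention for $e_{xy}$ (here $e_{xy}\delta_z = \langle \delta_z, \delta_x\rangle \delta_y$, so $e_{xy}^* = e_{yx}$ and $e_{ab}e_{cd} = \langle \delta_b, \delta_c\rangle e_{ad}$), so that the inner product expands the way I want; this is routine but needs care. A secondary point to double-check is that coarse-likeness is applied to $e_{x_1 x_2}$ as a single contraction of propagation $\leq m$ (which it is, for each fixed pair, uniformly in the pair since the bound $k_0$ depends only on $\varepsilon$ and $m$), rather than needing it for the SOT-sums — but that is exactly the content of Definition~\ref{Def.CoarseLike+}.
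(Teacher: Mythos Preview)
Your proof is correct and follows essentially the same approach as the paper: use coarse-likeness of $\Phi$ to bound the propagation of $\Phi(e_{x_1x_2})$ uniformly, then combine with \eqref{TagAst1} via the identity $\langle \Phi(e_{x_1 x_2}) \delta_{f(x_1)}, \delta_{f(x_2)} \rangle = \langle \Phi(e_{x_1 x_0}) \delta_{f(x_1)}, \Phi(e_{x_2 x_0}) \delta_{f(x_2)} \rangle$ (which the paper leaves implicit). Two minor remarks: the paper invokes Proposition~\ref{PropAlmostCoarseLike} rather than Proposition~\ref{PropCoarseLike}, but since $\Phi$ has already been arranged to be strongly continuous either works; and your stated composition rule $e_{ab}e_{cd} = \langle \delta_b, \delta_c\rangle e_{ad}$ is misindexed (under the paper's convention it is $e_{ab}e_{cd} = \langle \delta_d, \delta_a\rangle e_{cb}$), though the specific identity $e_{x_2 x_0}^* e_{x_1 x_0} = e_{x_1 x_2}$ you actually use is correct.
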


\begin{proof}
Fix $n\in \{1,\ldots, k\}$ and let $r>0$. By Proposition \ref{PropAlmostCoarseLike}, there exists $s>0$ such that $\Phi(e_{x_1x_2})$ is $\delta^2/2$-$s$-approximated for all $x_1,x_2\in X$ with $d(x_1,x_2)<r$. In particular, for all $x_1,x_2\in X$ with $d(x_1,x_2)<r$, it follows that
\begin{equation}\label{TagAst2}
|\langle \Phi(e_{x_1x_2})\delta_{y_1},\delta_{f(y_2)}\rangle|\leq\frac{\delta^2}{2}\tag{$**$}
\end{equation}
 for all $y_1,y_2\in Y$ so that $\partial (y_1,y_2)>s$. Therefore, if $x_1,x_2\in X_n$ and $d(x_1,x_2)<r$, \eqref{TagAst1} and  \eqref{TagAst2} implies that $\partial (f(x_1),f(x_2))<s$. 
\end{proof}
Since $f$ is uniformly finite-to-one, by splitting each $X_n$ into finite many pieces if necessary, we can assume that $f\restriction X_n$ is injective for all $n\in\{1,\ldots,k\}$.
\end{proof}

\begin{proof}[Proof of Corollary \ref{CorRigidityUniformRoeAlgEmbeddingsAsymDim}\eqref{CorRigidityUniformRoeAlgEmbeddingsAsymDim.1} and \eqref{CorRigidityUniformRoeAlgEmbeddingsAsymDim.1b}]
\eqref{CorRigidityUniformRoeAlgEmbeddingsAsymDim.1}:
If $\mathrm{asydim}(Y)=\infty$, we are done. So assume that $Y$ has finite asymptotic dimension.  In particular, $Y$ has property A \cite[\S11.5]{RoeBook}, so its   subspaces yield only compact ghost projections. Hence, by Theorem \ref{ThmRigidityUniformRoeAlgEmbeddingsFINITEUNION}, there exist a partition $X=\bigsqcup_{n=1}^kX_n$ so that $X_n$ is mapped into $Y$ by a uniformly finite-to-one coarse map  for all $n\in\{1,\ldots, k\}$. By Proposition \ref{PropCoarseFiniteToOneAsyDim}, we have that  $\mathrm{asydim}(X_n)\leq \mathrm{asydim}(Y)$ for all $n\in\{1,\ldots,k\}$. Hence,  $\mathrm{asydim}(X)\leq \mathrm{asydim}(Y)$ (see \cite[Corollary 26]{BellDranishnikov2008}).

\eqref{CorRigidityUniformRoeAlgEmbeddingsAsymDim.1b}: Note that if $X=\bigsqcup_{n=1}^k X_n$, then $X$ has property $A$ if and only if each $X_n$ does; this follows for example from \cite[Lemma 4.1]{RoeWillett2014}. By Theorem~\ref{ThmRigidityUniformRoeAlgEmbeddingsFINITEUNION}, there exist a partition $X=\bigsqcup_{n=1}^kX_n$ so that $X_n$ is mapped into $Y$ by an injective coarse map $f_n\colon X_n\to Y$, for all $n\in\{1,\ldots, k\}$. Suppose that there is $n$ such that $X_n$ does not have property $A$, and  let $a\in\cstu(X_n)$ be a noncompact ghost. Define $b\in\cB(\ell_2(Y))$ by 
\[
\langle b\delta_y,\delta_{y'} \rangle=\begin{cases}\langle a\delta_x, \delta_{x'}\rangle&\text{ if } y=f_n(x)\text{ and } y'=f_n(x')\\
0&\text{ else.}
\end{cases}
\]
Then $b$ is a  noncompact ghost. Moreover, since $f_n$ is coarse, $b\in\cstu(Y)$, a contradiction to $Y$ having property $A$.
\end{proof}

\subsection{Rank preserving embeddings}
We now improve Theorem \ref{ThmRigidityUniformRoeAlgEmbeddingsFINITEUNION} in the case the embedding $\cstu(X)\to \cstu(Y)$ satisfies the stronger property of sending minimal projections to minimal projections. First, we need the following lemma.

\begin{lemma}\label{LemmaTheMapsAreCoarse}
Let $X$ and $Y$ be u.l.f. metric spaces and $\Phi\colon \cstu(X)\to \cstu(Y)$ be a rank preserving $^*$-homomorphism.  Then for all $r,\delta>0$  there exists $s>0$ such that for all $x_1,x_2\in X$ and all $y_1,y_2\in Y$ we have that if  $d(x_1,x_2)\leq r$, 
$\|\Phi(e_{x_1x_1})e_{y_1y_1}\|\geq \delta$, and $\|\Phi(e_{x_2x_2})e_{y_2y_2}\|\geq \delta$, then $\partial (y_1,y_2)\leq s$.
\end{lemma}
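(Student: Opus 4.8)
The plan is to argue by contradiction, extracting from a failure of the conclusion a sequence of witnesses that we can package into a single finite-propagation operator whose image under $\Phi$ is forced (by the coarse-like / almost coarse-like property) to be close to finite propagation, contradicting the witnesses themselves. More precisely, suppose $r,\delta>0$ are fixed and the statement fails: for every $s>0$ there are $x_1^s,x_2^s\in X$ with $d(x_1^s,x_2^s)\le r$ and $y_1^s,y_2^s\in Y$ with $\partial(y_1^s,y_2^s)>s$, yet $\|\Phi(e_{x_1^s x_1^s})e_{y_1^s y_1^s}\|\ge\delta$ and $\|\Phi(e_{x_2^s x_2^s})e_{y_2^s y_2^s}\|\ge\delta$. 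The first step is to record that, since $\Phi$ is rank preserving, each $\Phi(e_{xx})$ is a \emph{minimal} projection, so $\|\Phi(e_{xx})e_{yy}\|\ge\delta$ says exactly that $\|\Phi(e_{xx})\delta_y\|\ge\delta$, i.e. the rank-one projection $\Phi(e_{xx})$ has its unit vector having $\ell_2$-mass at least $\delta^2$ on the coordinate $y$. In particular $\Phi(e_{x_1 x_2})=\Phi(e_{x_1 x_1})\Phi(e_{x_1 x_2})\Phi(e_{x_2 x_2})$ is a partial isometry from $\mathrm{Im}(\Phi(e_{x_2x_2}))$ onto $\mathrm{Im}(\Phi(e_{x_1x_1}))$, and hence $|\langle \Phi(e_{x_1 x_2})\delta_{y_2},\delta_{y_1}\rangle|$ is comparable to $\|\Phi(e_{x_1x_1})\delta_{y_1}\|\cdot\|\Phi(e_{x_2x_2})\delta_{y_2}\|$ up to a phase — this is the same computation used in the proof of Theorem \ref{ThmRigidityUniformRoeAlgEmbeddingsFINITEUNION}\eqref{T12.1}, yielding a lower bound of order $\delta^2$ (after possibly passing to a subsequence to control the phase, or by taking absolute values of matrix coefficients directly).

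Next I would invoke the almost coarse-like property of $\Phi$ (Proposition \ref{PropAlmostCoarseLike} in case $\Phi$ is compact preserving, or Proposition \ref{PropAlmostCoarseLikePropA} if $Y$ has property A — here one should note that a rank preserving $^*$-homomorphism between uniform Roe algebras is automatically compact preserving, since $\cK(\ell_2(X))$ is generated by minimal projections which are sent to minimal projections, hence to operators in $\cK(\ell_2(Y))$, so Proposition \ref{PropAlmostCoarseLike} applies unconditionally). Applying it to $m=r$ and to $\varepsilon$ equal to, say, half the lower bound $c\delta^2$ obtained above, we get a fixed $k\in\N$ such that for every contraction $a\in\cstu(X)$ of finite support with $\propg(a)\le r$, the operator $\Phi(a)$ is $\varepsilon$-$k$-approximable. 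In particular, each $\Phi(e_{x_1^s x_2^s})$ (which has propagation at most $r$ and finite support) admits $b_s\in\cB(\ell_2(Y))$ with $\propg(b_s)\le k$ and $\|\Phi(e_{x_1^s x_2^s})-b_s\|\le\varepsilon$. Then for $s>k$ we have $\partial(y_1^s,y_2^s)>s>k$, so $\langle b_s\delta_{y_2^s},\delta_{y_1^s}\rangle=0$, whence $|\langle\Phi(e_{x_1^s x_2^s})\delta_{y_2^s},\delta_{y_1^s}\rangle|\le\varepsilon<c\delta^2$, contradicting the lower bound. This closes the argument.

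The main obstacle, and the only genuinely delicate point, is the phase/absolute-value bookkeeping in the first step: $\Phi(e_{x_1 x_2})$ is a partial isometry but a priori the unit vectors $\Phi(e_{x_ix_i})\delta_{y_i}$ need not be aligned with the partial isometry's polar structure in a way that makes $\langle\Phi(e_{x_1 x_2})\delta_{y_2},\delta_{y_1}\rangle$ literally equal to the product of the norms. The clean fix is to observe that $\Phi(e_{x_1 x_1})$ is rank one with range spanned by a unit vector $\eta_{x_1}$, so $\Phi(e_{x_1 x_2})\delta_{y_2}=\langle \Phi(e_{x_2 x_2})\delta_{y_2},\eta_{x_2}\rangle\,\eta_{x_1}$ up to a scalar of modulus $\|\Phi(e_{x_2x_2})\delta_{y_2}\|$, and $\langle\eta_{x_1},\delta_{y_1}\rangle$ has modulus $\|\Phi(e_{x_1x_1})\delta_{y_1}\|$; multiplying, $|\langle\Phi(e_{x_1 x_2})\delta_{y_2},\delta_{y_1}\rangle|\ge\delta^2$ exactly (one checks the partial-isometry normalization gives no loss), so one may take $c=1$ and $\varepsilon=\delta^2/2$. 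Everything else is a routine assembly of the almost coarse-like estimate with the metric condition $\partial(y_1^s,y_2^s)>s$.
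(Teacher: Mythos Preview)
Your argument is correct and follows essentially the same route as the paper's proof: a contradiction argument combining the almost coarse-like property (Proposition~\ref{PropAlmostCoarseLike}) with the rank-one identity $|\langle\Phi(e_{x_1x_2})\delta_{y_1},\delta_{y_2}\rangle|=\|\Phi(e_{x_1x_1})e_{y_1y_1}\|\cdot\|\Phi(e_{x_2x_2})e_{y_2y_2}\|$, which the paper cites as \cite[Lemma~6.5]{BragaFarahVignati2018} while you rederive it directly. Two minor remarks: first, your explicit observation that rank preserving implies compact preserving (needed to invoke Proposition~\ref{PropAlmostCoarseLike}) is left implicit in the paper but is indeed required; second, with the paper's convention $e_{xy}\delta_z=\langle\delta_z,\delta_x\rangle\delta_y$, the partial isometry $\Phi(e_{x_1x_2})$ goes from $\mathrm{Im}(\Phi(e_{x_1x_1}))$ to $\mathrm{Im}(\Phi(e_{x_2x_2}))$, so the matrix coefficient you want is $\langle\Phi(e_{x_1x_2})\delta_{y_1},\delta_{y_2}\rangle$ rather than $\langle\Phi(e_{x_1x_2})\delta_{y_2},\delta_{y_1}\rangle$ --- your indices are swapped throughout, but the argument is unaffected once this is corrected.
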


\begin{proof}
Suppose otherwise. Then there exist 
 $r,\delta>0$, 
 sequences $(x^1_n)_n$ and $(x^2_n)_n$ in $X$, and sequences $(y^1_n)_n$ and $(y^2_n)_n$  in $Y$ such that $d(x_n^1,x_n^2)\leq r$, 
$\|\Phi(e_{x_n^1x_n^1})e_{y_n^1y_n^1}\|\geq \delta$,  $\|\Phi(e_{x_n^2x_n^2})e_{y_n^2y_n^2}\|\geq \delta$, and 
$\partial(y_n^1,y_n^2)\geq n$ for all $n\in\N$.

By Proposition \ref{PropAlmostCoarseLike},  $\Phi\colon \cstu(X)\to\cstu(Y)$ is almost coarse-like, so there exists $s>0$ such that $\partial(y,y')\geq s$ implies \[|\langle\Phi(e_{x^1_nx^2_n})\delta_y,\delta_{y'}\rangle|<\delta^2\] for all $n\in\N$. Fix $n\in\N$ such that $\partial(y^1_n,y^2_n)> s$. 
 
Since $\Phi$ is rank preserving,  $\Phi(e_{xx})$ is a rank 1 projection for all $x\in X$. Hence, \cite[Lemma 6.5]{BragaFarahVignati2018} implies that
 \begin{align*}
 \delta^2&\leq \|\Phi(e_{x^2_nx^2_n})e_{y^2_ny^2_n}\|\cdot\|\Phi(e_{x^1_nx^1_n})e_{y^1_ny^1_n}\|\\
 &=\|e_{y^2_ny^2_n}\Phi(e_{x^1_nx^2_n})e_{y^1_ny^1_n}\|\\
 &=|\langle\Phi(e_{x^1_nx^2_n})\delta_{y^1_n},\delta_{y^2_n}\rangle|\\
 &<\delta^2;
 \end{align*}
 contradiction.
\end{proof}

\begin{theorem}\label{ThmRigidityUniformRoeAlgEmbeddings}
Let $X$ and $Y$ be u.l.f. metric spaces and assume that all the sparse subspaces of $Y$ yield only compact ghost projections. If   $\cstu(X)$ embeds into $\cstu(Y)$ by a rank  preserving map, then   there exists a uniformly finite-to-one  coarse map $X\to Y$.
\end{theorem}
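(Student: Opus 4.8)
The plan is to extract a single map $f\colon X\to Y$ directly from the diagonal matrix units, using Lemma~\ref{LemmaPickMapf} for uniform finite‑to‑oneness and Lemma~\ref{LemmaTheMapsAreCoarse} for coarseness. No partition of $X$ is needed here (in contrast to Theorem~\ref{ThmRigidityUniformRoeAlgEmbeddingsFINITEUNION}\eqref{T12.1}), precisely because rank preservation lets us control the images $\Phi(e_{xx})$ individually rather than only through the off‑diagonal approximation that was available there.

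First I would record that a rank preserving embedding $\Phi\colon\cstu(X)\to\cstu(Y)$ is automatically compact preserving. For $x,x'\in X$ the matrix unit $e_{xx'}$ is a partial isometry with $e_{xx'}^*e_{xx'}=e_{xx}$ and $e_{xx'}e_{xx'}^*=e_{x'x'}$, and these are minimal projections of $\cstu(X)$; since $\Phi$ is a $^*$‑homomorphism carrying minimal projections to minimal (hence rank‑one) projections, $\Phi(e_{xx'})$ is a partial isometry whose source projection has rank one, so $\Phi(e_{xx'})$ has rank one. Consequently $\Phi$ sends finite rank operators to finite rank operators, and since every compact operator in $\cstu(X)$ is a norm limit of finite rank operators and $\Phi$ is norm‑continuous, $\Phi(\cK(\ell_2(X)))\subseteq\cK(\ell_2(Y))$. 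Thus $\Phi$ satisfies the hypotheses of both Lemma~\ref{LemmaPickMapf} and Lemma~\ref{LemmaTheMapsAreCoarse}.

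Next I would invoke Lemma~\ref{LemmaPickMapf}, using the standing hypothesis that all sparse subspaces of $Y$ yield only compact ghost projections, to obtain $\delta:=\inf_{x\in X}\sup_{y\in Y}\|\Phi(e_{xx})e_{yy}\|>0$. For each fixed $x$ we have $\sum_{y\in Y}\|\Phi(e_{xx})\delta_y\|^2=1$ because $\Phi(e_{xx})$ is a rank‑one projection, so only finitely many $y$ satisfy $\|\Phi(e_{xx})e_{yy}\|\geq\delta$ and the supremum is attained; choose $f(x)\in Y$ attaining it, so that $\|\Phi(e_{xx})e_{f(x)f(x)}\|\geq\delta$ for every $x\in X$. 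The ``moreover'' part of Lemma~\ref{LemmaPickMapf} then gives that $f$ is $\lfloor\delta^{-2}\rfloor$‑to‑one, in particular uniformly finite‑to‑one. (Alternatively one may take any $f(x)$ with $\|\Phi(e_{xx})e_{f(x)f(x)}\|\geq\delta/2$ and rerun the orthogonality estimate from the proof of Lemma~\ref{LemmaPickMapf} to see $f$ is $\lfloor 4\delta^{-2}\rfloor$‑to‑one.)

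Finally I would verify that $f$ is coarse. Fix $r>0$. Since $\Phi$ is a compact preserving rank preserving $^*$‑homomorphism, Lemma~\ref{LemmaTheMapsAreCoarse} applied with this $r$ and with $\delta$ as above produces $s>0$ such that, whenever $x_1,x_2\in X$ satisfy $d(x_1,x_2)\leq r$ and $y_1,y_2\in Y$ satisfy $\|\Phi(e_{x_ix_i})e_{y_iy_i}\|\geq\delta$ for $i=1,2$, one has $\partial(y_1,y_2)\leq s$. Substituting $y_i=f(x_i)$, which is legitimate by the choice of $f$, yields $\partial(f(x_1),f(x_2))\leq s$ whenever $d(x_1,x_2)\leq r$; as $r$ was arbitrary, $f$ is coarse, and the proof is complete. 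The only genuinely load‑bearing observation at this level is the first one --- that rank preservation forces $\Phi$ to be compact preserving with $\Phi(e_{xx})$ of rank one --- after which the argument is just a composition of the two cited lemmas; so the real work sits inside Lemma~\ref{LemmaPickMapf} and Lemma~\ref{LemmaTheMapsAreCoarse} (ultimately the ghost‑projection dichotomy and almost‑coarse‑likeness), and the main thing to check is that the imported machinery applies verbatim.
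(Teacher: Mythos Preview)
Your proposal is correct and follows essentially the same route as the paper: invoke Lemma~\ref{LemmaPickMapf} to produce the uniformly finite-to-one $f$, then Lemma~\ref{LemmaTheMapsAreCoarse} to check $f$ is coarse. Your explicit verification that rank preservation forces $\Phi$ to be compact preserving (so that Lemma~\ref{LemmaPickMapf} applies) is a point the paper leaves implicit, and the paper additionally mentions strong continuity of $\Phi$, which, as you correctly observe, is not actually needed since neither cited lemma requires it.
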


\begin{proof}
Let $\Phi\colon \cstu(X)\to \cstu(Y)$ be a rank preserving strongly continuous embedding. Let $f\colon X\to Y$ be the uniformly finite-to-one map given by Lemma \ref{LemmaPickMapf}. By Lemma \ref{LemmaTheMapsAreCoarse}, $f$ is coarse.
\end{proof}

The following example shows that the conclusions of Theorem \ref{ThmRigidityUniformRoeAlgEmbeddingsFINITEUNION} and Theorem \ref{ThmRigidityUniformRoeAlgEmbeddings} are not equivalent.

\begin{proposition}\label{PropSpacesWhosePartitionEmbs}
There are u.l.f. metric spaces  $X$ and $Y$ such that $X$ cannot be mapped into $Y$ by a coarse uniformly finite-to-one map, but there exist a partition $X=X_1\sqcup X_2$ and
   injective coarse maps $X_i\to Y$ for $i\in \{1,2\}$.
\end{proposition}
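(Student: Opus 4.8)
The plan is to construct $X$ and $Y$ explicitly so that $X$ splits into two pieces each of which embeds coarsely and injectively into $Y$, yet no single coarse uniformly finite-to-one map $X\to Y$ exists. The natural obstruction to a uniformly finite-to-one coarse map is a \emph{density} mismatch: if $X$ contains, at arbitrarily large scales, finite configurations that are ``too fat'' compared to what sits at comparable scales in $Y$, then any coarse map must collapse many points of $X$ onto few points of $Y$, violating the uniform finite-to-one bound. On the other hand, splitting $X$ into two halves of half the density each can repair this, provided $Y$ has exactly the intermediate density.

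Concretely, I would take $Y$ to be a sparse space $Y=\bigsqcup_n Y_n$ where $Y_n$ is, say, an interval of length roughly $n$ in $\Z$ (or a box), with the blocks placed far apart, so that $|Y_n|\sim n$; and take $X=\bigsqcup_n X_n$ with the \emph{same} block-separation pattern but with $X_n$ a configuration of size roughly $2n$ sitting inside an interval of length roughly $n$ — for instance two interleaved copies of $Y_n$, or $\{0,\tfrac12,1,\tfrac32,\dots\}$-type scaling, arranged so that $X_n$ is metrically a ``doubled'' $Y_n$. First I would verify $X$ and $Y$ are u.l.f.: since within each block the points have bounded local multiplicity and the blocks are far apart, balls of any fixed radius meet only one block and contain boundedly many points. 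Next, the partition: write $X_n = X_n^1\sqcup X_n^2$ where $X_n^1,X_n^2$ are the two copies of $Y_n$; set $X^i=\bigsqcup_n X_n^i$. Then the obvious bijections $X_n^i\to Y_n$ assemble to maps $g^i\colon X^i\to Y$ which are injective, and coarse because points within bounded distance in $X^i$ lie in a single block and the block-to-block identification is an isometry there (with the inter-block distances matching). So each $g^i$ is an injective coarse map $X^i\to Y$.

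The remaining, and main, point is to show no coarse uniformly finite-to-one $f\colon X\to Y$ exists. Suppose $f$ is coarse with constant: there is $R$ so that $d_X(x,x')\le 1$ implies $\partial(f(x),f(x'))\le R$. Fix a block index $n$ with $n$ large (larger than all inter-block gaps that $f$ could bridge — here I would first argue, using coarseness applied to consecutive points, that for $n$ large $f(X_n)$ is contained in a single block $Y_{m(n)}$, because $X_n$ is ``connected at scale $1$'' while distinct $Y$-blocks are separated by more than $R$). Within that block, $f$ maps the $\sim 2n$ points of $X_n$, which span a set of diameter $\sim n$, into $Y_{m(n)}$; coarseness forces the image to have diameter $\le R\cdot(\text{diam}\,X_n)\sim Rn$, hence $m(n)\lesssim Rn$, so $|Y_{m(n)}|\lesssim Rn$. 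By pigeonhole some point of $Y_{m(n)}$ has at least $2n/(CRn) = 2/(CR)$ preimages — which on its own is bounded, so I need to push harder: I would instead choose the sizes to grow, e.g. $|X_n|\sim n^2$ inside a diameter-$n$ interval and $|Y_n|\sim n$, arranging $X_n$ as an $n\times n$-type grid scaled into width $n$ while $Y_n$ is a genuine interval of length $n$; then $m(n)\lesssim Rn$ still gives $|Y_{m(n)}|\lesssim Rn$, and pigeonhole yields a point with $\gtrsim n^2/(Rn)=n/R\to\infty$ preimages, contradicting uniform finite-to-one. Simultaneously I must re-check that such $X$ still splits into two (or finitely many) pieces each coarsely injectively embeddable into $Y$ — this forces a slightly different bookkeeping, since an $n\times n$ grid does not split into two interval-like pieces; the clean fix is to let $X_n$ be a ``fat'' version of $Y_n$ by a bounded factor so the two-piece split works, and to recover unboundedness of preimages by instead letting the \emph{block separations} in $X$ be smaller than in $Y$, so that for large $n$ the map $f$ is forced to send several consecutive $X$-blocks into one $Y$-block, again blowing up fibers. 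Reconciling ``two injective coarse pieces exist'' with ``no single uniformly finite-to-one coarse map'' is exactly the delicate balancing act, and that is the step I expect to be the main obstacle; once the metric parameters are tuned so that both hold, the verifications are routine counting and triangle-inequality arguments.
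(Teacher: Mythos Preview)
Your density/counting approach has a genuine gap that you yourself identify but do not resolve. The tension is structural: if $X$ splits into two pieces $X_1,X_2$ each admitting an injective coarse map into $Y$, then for any bounded set $A\subseteq X$ of diameter $D$ you have $|A|\le |g_1(A\cap X_1)|+|g_2(A\cap X_2)|$, and each image sits in a ball of radius $\rho(D)$ in $Y$. So the ``density'' of $X$ at scale $D$ is at most twice that of $Y$ at a comparable scale, and a pigeonhole argument can never force fiber sizes to blow up --- your own first computation ($2/(CR)$ preimages) is exactly this phenomenon. Your second attempt ($n^2$ vs.\ $n$) fails the split condition for the same reason in reverse, and the separation idea is left as a hope rather than a construction. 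A purely cardinality-based obstruction cannot separate the two hypotheses.

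The paper uses a \emph{dimensional} obstruction instead. Take $X_1\subset\N^2$ to be a union of $(n{+}1)\times(n{+}1)$ grids placed along the $x$-axis at positions $n^2$, let $X_2$ be the remainder of the $x$-axis, and set $X=X_1\sqcup X_2$. Let $Y\subset\N^3$ be a copy of $X_1$ together with a perpendicular copy of $\N$ attached at the origin. Each $X_i$ maps isometrically into $Y$. For the nonexistence: if $f\colon X\to Y$ were coarse and uniformly finite-to-one, then $f$ restricted to the $x$-axis (connected at scale $1$) must eventually land in the perpendicular ray of $Y$, since it can neither jump between the increasingly separated squares of $Y$ nor stay in one finite square forever. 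On the other hand, the union of grids in $X_1$ has asymptotic dimension $2$, so by Proposition~\ref{PropCoarseFiniteToOneAsyDim} some tail of it cannot map entirely into the $1$-dimensional ray --- infinitely many grid points must land in the square part of $Y$. Each such point is joined by a scale-$1$ path inside $X$ to the baseline; under $f$ this becomes a path of step $\le m_0$ in $Y$ starting in the square part and ending deep in the ray, hence passing through the finite bottleneck $\{(0,0)\}\times[0,m_0]$. Infinitely many such crossings contradict uniform finite-to-one. The missing idea in your attempt is precisely this use of asymptotic dimension together with a connectivity/bottleneck argument in place of counting.
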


\begin{proof} 
Let \[X_1=\N^2\cap\bigcup_{n\in\N}[n^2,n^2+n]\times[0,n]\text{ and }X_2=\N\times \{0\}\setminus X_1,\]
so $X_1,X_2\subset \N^2$ and $X_1\cap X_2=\emptyset$. Define \[X=X_1\sqcup X_2\text{ and }Y=X_1\times \{0\}\cup \{(0,0)\}\times \N,\]
so $X\subset \N^2$ and $Y\subset \N^3$, and endow $X$ and $Y$ with the standard subspace metrics (see Figure \ref{FigXY}), which we will denote by $d$. Clearly, $X_i$ can be mapped into $Y$ isometrically for $i\in\{1,2\}$.
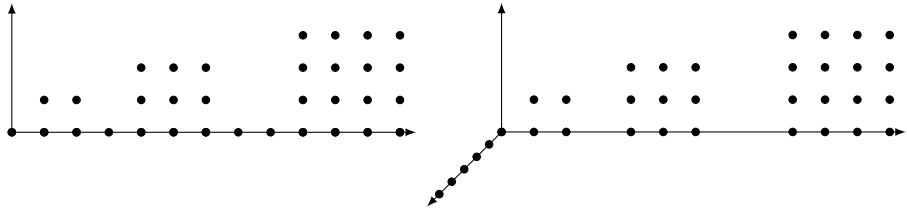
\begin{figure}[ht]
  \begin{tikzpicture}[scale=0.43]
    \coordinate (Origin)   at (0,0);
   \coordinate (XAxisMin) at (0,0);
    \coordinate (XAxisMax) at (12.5,0);
    \coordinate (YAxisMin) at (0,-2.3);
    \coordinate (YAxisMax) at (0,4);
\draw [thin, black,-latex] (XAxisMin) -- (XAxisMax);
    \draw [thin, black,-latex] (Origin) -- (YAxisMax);
     \draw [thin, white,-latex] (Origin) -- (YAxisMin);
  
     \foreach \x in {0}{
      \foreach \y in {0}{
        \node[draw,circle,inner sep=1pt,fill] at (\x,\y) {};
       }} 
     \foreach \x in {1,2}{
      \foreach \y in {0,1}{
        \node[draw,circle,inner sep=1pt,fill] at (\x,\y) {};
        }}
     \foreach \x in {4,5,6}{
      \foreach \y in {0,1,2}{
        \node[draw,circle,inner sep=1pt,fill] at (\x,\y) {};
}}
     \foreach \x in {9,10,11,12}{
      \foreach \y in {0,1,2,3}{
        \node[draw,circle,inner sep=1pt,fill] at (\x,\y) {};
  }}  
  
 \foreach \x in {0,1,2,3,4,5,6,7,8,9,10,11,12}{
      \foreach \y in {0}{
        \node[draw,circle,inner sep=1pt,fill] at (\x,\y) {};
  }}       
  \end{tikzpicture} \begin{tikzpicture}[scale=0.43]
    \coordinate (Origin)   at (0,0,0);
   \coordinate (XAxisMin) at (0,0,0);
    \coordinate (XAxisMax) at (12.5,0,0);
    \coordinate (YAxisMin) at (0,0,0);
    \coordinate (YAxisMax) at (0,4,0);
        \coordinate (ZAxisMin) at (0,0,0);
    \coordinate (ZAxisMax) at (0,0,6);

\draw [thin, black,-latex] (XAxisMin) -- (XAxisMax);
    \draw [thin, black,-latex] (YAxisMin) -- (YAxisMax);
     \draw [thin, black,-latex] (ZAxisMin) -- (ZAxisMax);
 
     \foreach \x in {0}{
      \foreach \y in {0}{
       \foreach \z in {0}{
        \node[draw,circle,inner sep=1pt,fill] at (\x,\y,\z) {};
       }} }
     \foreach \x in {1,2}{
      \foreach \y in {0,1}{
       \foreach \z in {0}{
        \node[draw,circle,inner sep=1pt,fill] at (\x,\y,\z) {};
        }}}
     \foreach \x in {4,5,6}{
      \foreach \y in {0,1,2}{
       \foreach \z in {0}{
 
        \node[draw,circle,inner sep=1pt,fill] at (\x,\y,\z) {};
}}}
     \foreach \x in {9,10,11,12}{
      \foreach \y in {0,1,2,3}{
       \foreach \z in {0}{
 
        \node[draw,circle,inner sep=1pt,fill] at (\x,\y,\z) {};
  }}  }
   \foreach \x in {0}{
      \foreach \y in {0}{
      \foreach \z in {0,1,2,3,4,5}{
        \node[draw,circle,inner sep=1pt,fill] at (\x,\y,\z) {};
  }}       }
  \end{tikzpicture}\caption{The metric spaces $X$ (left) and $Y$ (right)}\label{FigXY}
\end{figure}

Assume for a contradiction that there exists a coarse uniformly finite-to-one map $f\colon X\to Y$.  Define 
\[m_0=\sup\{d(f(x,y),f(z,w))\mid d((x,y),(z,w))\leq 1\},\]
so $m_0<\infty$. Since the $f\restriction \N\cup \{0\}$ is also coarse and uniformly finite-to-one,  there exists $k_0\in\N$ such that $f((n,0))\in \{(0,0)\}\times \N$ for all $n>k$. Without loss of generality, assume that 
\begin{enumerate}
\item\label{Item.1} $f((n,0))\in \{(0,0)\}\times (m_0,\infty)$,   for all $n>k_0$.
\end{enumerate}

Since $X_1$ (and its ``tails'') has asymptotic dimension $2$ and $\{(0,0)\}\times\N$ has asymptotic dimension $1$, the image of any ``tail'' of $X_1$ under $f$ must intersect $X_1\times \{0\}$. In other words, there exists a sequence $(x_i,y_i)_i$ in $X_1$ and a strictly increasing sequence of natural numbers $(n_i)_i$  such that
\begin{enumerate}\setcounter{enumi}{1}
\item\label{Item.2} $(x_i,y_i)\in [n_i^2,n_i^2+n_i]\times[0,n_i]$ for all $i\in\N$, and
\item\label{Item.3} $f((x_i,y_i))\in X_1\times \{0\}$ for all $i\in\N$.
\end{enumerate}
Notice that, using \eqref{Item.2}, for each $i\in\N$, there exists $\ell\in\N$ and a path $(x_i,y_i)=(x^{(0)}_i,y^{(0)}_i), \ldots,(x^{(\ell)}_i,y^{(\ell)}_i)=(n_i^2,0)$ such that $|(x^{(j)}_i,y^{(j)}_i)-(x^{(j+1)}_i,y^{(j+1)}_i)|=1$ for all $j\leq \ell-1$. Hence, since $f$ is coarse, \eqref{Item.1} and \eqref{Item.3} imply that
\[f^{-1}(\{(0,0)\}\times [0,m_0])\cap [n_i^2,n_i^2+n_i]\times[0,n_i]\neq \emptyset\]
for all $i\in\N$. This contradicts the fact that $f$ is uniformly finite-to-one.
\end{proof}

\subsection{Finite decomposition complexity (FDC)}\label{SubsectionFDC} 
This is a geometrical property formally stronger than property A\footnote{Notice that it is now known whether FDC and property A are equivalent.} and weaker than finite asymptotic dimension. 
We now show that it is  preserved by rank preserving embeddings of uniform Roe algebras. Since the definition of finite decomposition complexity is technical and long and we will not use it directly, we omit it and 
 refer the reader to either \cite[Definition 2.3]{GuentnerTesseraYu2012} or \cite[Definition 2.7.3]{NowakYuBook}. 

We need the following definitions bellow.

\begin{definition}
Let $\cX$ and $\cY$ be   families of metric spaces. 
\begin{enumerate}
\item The family $\cX$ is \emph{bounded} if $\sup_{X\in \cX}\diam(X)<\infty$.
\item The family $\cX$ is \emph{locally finite} if $\sup_{X\in \cX}\sup_{x\in X}|B_r(x)|<\infty$ for all $r> 0$.
\item The family $\cX$ is a \emph{subspace} of $\cY$ if every $X\in \cX$ is a subspace of some $Y\in \cY$.
\item A \emph{map of families} $F$ is a subset of $\bigcup_{X\in \cX,Y\in \cY}\{f\colon X\to Y\}$ such that for all $X\in \cX$ there exists $Y\in \cY$ and $f\colon X\to Y$ such that $f\in F$. We write $F\colon \cX\to \cY$.
\item A map of families $F\colon \cX\to \cY$ is \emph{uniformly finite-to-one} if there exists $k\in\N$ such that every $f\in F$ is $k$-to-one.
\end{enumerate}
\end{definition}

\begin{lemma}\label{LemmaRDecomposes}
Let $\cX$ be a family of metric spaces, $\cY$ be a family of u.l.f. metric spaces, and let $F\colon \cX\to \cY$ be a uniformly finite-to-one map of families. Let $\cV$ be a bounded subspace of $\cY$, and
\[  F^{-1}(\cV)=\{ f^{-1}(V)\mid V\in \cV, f\in F\}.\] Then for all $R>0$ there exists $L>0$ such that for all $U\in F^{-1}(\cV)$ there exists $k\in\N$ and $U_1,\ldots, U_k\subseteq U$ such that 
\begin{enumerate}
\item $U=\bigcup_{i=1}^kU_i$, 
\item $d(U_i,U_j)>R$ for all $i\neq j$, where $d$ is the metric of $U$, and
\item   $\diam(U_i)< L$ for all $i\leq k$.
\end{enumerate}
\end{lemma}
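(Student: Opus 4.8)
The plan is to establish this as a uniform (family) version of Claim~\ref{Claim1}. The only new ingredient needed is that all the sets in $F^{-1}(\cV)$ have cardinality bounded by a single constant; once that is available, one fixed recipe produces the required decomposition for every $U$ at once.

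First I would fix $k_0\in\N$ with every $f\in F$ being $k_0$-to-one (possible since $F$ is uniformly finite-to-one), put $M_0=\sup_{V\in\cV}\diam(V)<\infty$ (as $\cV$ is bounded), and use that $\cY$ is locally finite to obtain $N:=\sup_{Y\in\cY}\sup_{y\in Y}|B_{M_0}(y)|<\infty$. Since every $V\in\cV$ lies in some $Y\in\cY$ and has diameter at most $M_0$, it satisfies $|V|\le N$; hence every $U=f^{-1}(V)\in F^{-1}(\cV)$ satisfies $|U|\le k_0|V|\le k_0 N=:M$, a bound independent of $U$. Then, given $R>0$, I would set $L=MR$ and, for each $U\in F^{-1}(\cV)$, take $U_1,\dots,U_k$ to be the connected components of the graph on the vertex set $U$ in which two distinct points are joined by an edge exactly when their distance (in the metric $d$ of $U$) is $\le R$. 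This yields $U=\bigcup_{i=1}^k U_i$ with $k\le|U|\le M$; distinct components are at distance $>R$ (no edge crosses between them and $U$ is finite); and each component has diameter $\le (M-1)R<L$, because any two of its points are joined by a simple path with at most $M-1$ edges, each of length $\le R$. These are precisely the $U_i$ demanded by the statement, and $L$ depends only on $R$ (through $M$).

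I do not expect a genuine obstacle here: once the uniform bound $M$ on $|U|$ is in hand, the rest is the elementary splitting already carried out in Claim~\ref{Claim1} (indeed the connected components described above are exactly the pieces one ends up with upon iterating that claim's dichotomy). The one point worth flagging is that the argument uses local finiteness of the \emph{family} $\cY$, not merely that its members are u.l.f.\ metric spaces, in order to pass from the diameter bound on $\cV$ to a cardinality bound on its members; this is the setting in which the lemma is applied, where $F^{-1}(\cV)$ arises in the course of decomposing a fixed u.l.f.\ space.
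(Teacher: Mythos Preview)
Your proposal is correct and follows essentially the same route as the paper: both arguments first use boundedness of $\cV$ together with local finiteness of the family $\cY$ and the uniform finite-to-one property of $F$ to obtain a single bound $M$ on $|U|$ for all $U\in F^{-1}(\cV)$, then set $L=MR$ and invoke Claim~\ref{Claim1} (your connected-components construction being exactly the output of iterating that claim). Your flag about needing local finiteness of the \emph{family} $\cY$ rather than merely of its members is well observed; the paper's proof uses precisely this.
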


\begin{proof}
Fix $R>0$. Since $\cY$ is locally finite and $\cV$ is a bounded subspace of $\cY$, there exists $N\in\N$ such that $\sup_{V\in \cV}|V|<N$. Since $F$ is uniformly finite-to-one, it follows that  \[M=\sup_{U\in F^{-1}(\cV)}|U|<\infty.\]
The result now follows from Claim \ref{Claim1} for $L=MR$.
\end{proof}

\begin{corollary}\label{CorFDCFiniteToOne}
Let $X$ and $Y$ be u.l.f. metric spaces and assume that $X$ is mapped into $Y$ by a coarse uniformly finite-to-one map. If $Y$ has finite dimension complexity, then $X$ has finite dimensional complexity.
\end{corollary}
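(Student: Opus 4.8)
The plan is to run a ``strategy stealing'' argument: pull back a winning strategy in the decomposition game for $Y$, along the coarse uniformly finite-to-one map, to a winning strategy for $X$. The only nonroutine point is what happens at the bottom of the game tree, and that is precisely where Lemma~\ref{LemmaRDecomposes} is used.

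Recall (see \cite[Definition 2.3]{GuentnerTesseraYu2012} or \cite[Definition 2.7.3]{NowakYuBook}) that $\fD$ denotes the smallest class of metric families such that (a) every bounded family belongs to $\fD$, and (b) if $\cX$ is a metric family and for every $R>0$ there is a family in $\fD$ over which $\cX$ is $R$-decomposable, then $\cX\in\fD$; a metric space $Z$ has finite decomposition complexity exactly when $\{Z\}\in\fD$. Call a map of families $F\colon \cX\to\cY$ \emph{uniformly coarse} if there is a single nondecreasing $\rho\colon[0,\infty)\to[0,\infty)$ with $\partial(f(x),f(x'))\le \rho(d(x,x'))$ for all $f\in F$ (where $f\colon (X,d)\to (Y,\partial)$) and all $x,x'$ in the domain of $f$; in particular, the restrictions of a single coarse map form a uniformly coarse family, $\rho$ being the modulus of the original map. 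The key consequence is that, given $R>0$, uniform coarseness supplies an $S=S(R)>0$ so that for every $f\in F$ and all sets $A,B$ in the target of $f$ with $\partial(A,B)>S$ one has $d(f^{-1}(A),f^{-1}(B))>R$.

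The heart of the matter is the following claim, which I would prove by induction on the generation of $\fD$: \emph{if $\cY$ is a u.l.f.\ family in $\fD$, $\cX$ is a u.l.f.\ family, and $F\colon \cX\to\cY$ is a uniformly finite-to-one, uniformly coarse map of families, then $\cX\in\fD$.} For the base case $\cY$ is bounded, and Lemma~\ref{LemmaRDecomposes} applied to $F^{-1}(\cY)=\cX$ shows that for every $R>0$ the family $\cX$ is $R$-decomposable over a family of sets of diameter $<L(R)$, i.e.\ over a bounded (hence $\fD$-) family; so $\cX\in\fD$ by rule (b). For the inductive step, suppose $\cY$ is $S$-decomposable over some $\cY_S\in\fD$ for every $S>0$. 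Fix $R>0$, take $S=S(R)$ as above, and decompose each $Y\in\cY$ as an $S$-decomposition over $\cY_S$; pulling the pieces back by the relevant $f\in F$ converts this into an $R$-decomposition of the corresponding $X\in\cX$ over the family $F^{-1}(\cY_S)$. The induced map $F^{-1}(\cY_S)\to\cY_S$ is made of restrictions of members of $F$, hence is again uniformly finite-to-one with the same constant and uniformly coarse with the same $\rho$; moreover $\cY_S$ is u.l.f.\ (its members are subspaces of members of $\cY$) and $F^{-1}(\cY_S)$ is u.l.f.\ (its members are subspaces of members of $\cX$), and $\cY_S$ lies lower in the generation of $\fD$, so the inductive hypothesis yields $F^{-1}(\cY_S)\in\fD$. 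As $R>0$ was arbitrary, $\cX\in\fD$ by rule (b).

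Finally, to deduce the corollary: if $f\colon X\to Y$ is coarse and uniformly finite-to-one, then $F=\{f\}\colon\{X\}\to\{Y\}$ is a uniformly finite-to-one (with constant $\sup_{y\in Y}|f^{-1}(\{y\})|<\infty$) and uniformly coarse map of u.l.f.\ families, and $\{Y\}\in\fD$ because $Y$ has finite decomposition complexity; the claim then gives $\{X\}\in\fD$, i.e.\ $X$ has finite decomposition complexity. The one genuinely delicate point is the base case: preimages of bounded sets under $f$ need not be bounded (they are finite, but of possibly large diameter), and this is exactly what Lemma~\ref{LemmaRDecomposes} -- resting on Claim~\ref{Claim1} -- is designed to handle. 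Everything else is a bookkeeping exercise in carrying the constants $k$ (finite-to-one) and $\rho$ (coarseness modulus) unchanged through the decomposition.
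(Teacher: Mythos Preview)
Your proposal is correct and is essentially the same approach the paper indicates: the paper's proof sketch says to follow \cite[\S3.1.3]{GuentnerTesseraYu2013} (or \cite[Theorem~2.8.4]{NowakYuBook}) verbatim, substituting Lemma~\ref{LemmaRDecomposes} for the lemma that handles preimages of bounded families---and that is exactly the strategy-stealing / transfinite-induction argument you have written out, with Lemma~\ref{LemmaRDecomposes} appearing precisely at the base case you identify.
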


\begin{proofsketch}
This follows exactly as the proof of \cite[\S 3.1.3]{GuentnerTesseraYu2013}  (or \cite[Theorem 2.8.4]{NowakYuBook}) but with Lemma \ref{LemmaRDecomposes} substituting \cite[Lemma 3.1.2]{GuentnerTesseraYu2013} (\cite[Lemma 2.8.3]{NowakYuBook}).\qed
\end{proofsketch}

\begin{proof}[Proof of Corollary \ref{CorRigidityUniformRoeAlgEmbeddingsAsymDim}\eqref{CorRigidityUniformRoeAlgEmbeddingsAsymDim.2}]
If $Y$ has finite decomposition complexity, then $Y$ has property A \cite[Theorem 4.3]{GuentnerTesseraYu2013}. In particular, all sparse subspaces of $Y$ yield only compact ghost projections. By Theorem \ref{ThmRigidityUniformRoeAlgEmbeddingsFINITEUNION}, there exist a partition $X=\bigsqcup_{n=1}^kX_n$ such that $X_n$ is mapped by a uniformly finite-to-one coarse map into $Y$ for all $n\in\N$. So, \cite[3.1.3]{GuentnerTesseraYu2013} implies that $X_n$ has FDC for all $n\in \{1,\ldots,k\}$. By  \cite[3.1.7]{GuentnerTesseraYu2013}, $X$ has FDC.
\end{proof}

\subsection{The not compact preserving setting}
In this subsection, we show that the map $f\colon X\to Y$ obtained by Proposition \ref{LemmaPickMapf} can still be obtained without the compact preserving hypothesis on $\Phi\colon\cstu(X)\to \cstu(Y)$ if we assume that $Y$ satisfies a stronger geometric hypothesis, namely property A. We refer the reader to \S\ref{SectionOpenProb} for further  discussion on the not compact preserving case.

\begin{lemma}\label{LemmaPickMapfForPropertyA}
Let $X$ and $Y$ be  metric spaces and $\Phi\colon \cstu(X)\to\cstu(Y)$ be an embedding. Assume  $Y$ has property A. Then 
\[\delta=\inf_{x\in X}\sup_{y\in Y}\|\Phi(e_{xx})e_{yy}\|>0.\]
Moreover, if $f\colon X\to Y$ is a map such that $\|\Phi(e_{xx})e_{f(x)f(x)}\|\geq \delta$ for all $x\in X$, then $f$ is $\lceil \delta^{-2}\rceil$-to-one. 
\end{lemma}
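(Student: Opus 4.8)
The statement is the property-A analogue of Lemma~\ref{LemmaPickMapf}, with the conclusion weakened from $\lfloor\delta^{-2}\rfloor$-to-one to $\lceil\delta^{-2}\rceil$-to-one (because we no longer have the exact orthogonality of the ranges $\Phi(e_{xx})$ at our disposal). The plan is to mirror the proof of Lemma~\ref{LemmaPickMapf}, replacing the two inputs that were specific to the compact-preserving case by their property-A counterparts. First I would apply Theorem~\ref{ThmEmbImpliesStrongEmb}(ii): since $Y$ has property A and $\Phi$ is an embedding, there is a projection $p\in\cstu(Y)$ commuting with $\Phi(\cstu(X))$ so that $\Phi^p=p\Phi(\cdot)p$ is a nonzero strongly continuous $^*$-homomorphism. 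Set $p_n=\Phi(e_{x_nx_n})$ after enumerating $X=\{x_n\mid n\in\N\}$. Note the $p_n$ need no longer be finite rank, but they are still projections, and $\Phi^p(e_{x_nx_n})=pp_np$ are projections in $\cstu(Y)$.

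**Positivity of $\delta$.** The heart is showing $\delta=\inf_x\sup_y\|\Phi(e_{xx})e_{yy}\|>0$. Suppose not; then there is a subsequence along which $\sup_y\|p_{n}e_{yy}\|\to 0$. By Claim~\ref{ClaimcccOverFin}, the ideal $\SJ$ of sets $L$ on which $\Phi$ is SOT-additive on $\sum_{n\in L}e_{x_nx_n}$ is ccc/Fin, so passing to a further subsequence we may assume the index set lies in $\SJ$; thus $\Phi^p$ agrees with $\Phi$ (modulo nothing, exactly) on the relevant sums and $\Phi^p(\sum_{n\in L}e_{x_nx_n})=\mathrm{SOT}\text{-}\sum_{n\in L}\Phi^p(e_{x_nx_n})$. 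Now I would invoke \cite[Proposition 4.1]{BragaFarahVignati2018} — exactly as in Lemma~\ref{LemmaPickMapf}, but applied to the sequence of (possibly infinite rank) projections $q_n=\Phi^p(e_{x_nx_n})$ in $\cstu(Y)$, whose pairwise orthogonal ranges sit inside $\cstu(Y)$ and which are SOT-summable with noncompact sum — to extract a sparse subspace $Z\subseteq Y$ supporting a noncompact ghost projection. Since $Y$ has property A, every subspace of $Y$ yields only compact ghost projections, contradicting $\sup_y\|q_ne_{yy}\|\to 0$ forcing ghostliness of the diagonal compression. The one point needing care is that $\Phi^p(e_{x_nx_n})$ could be infinite rank, whereas \cite[Proposition 4.1]{BragaFarahVignati2018} may be phrased for finite rank projections; if so, I would instead compress each $q_n$ to a rank-one subprojection $r_n\le q_n$ with $\sup_y\|r_ne_{yy}\|$ still small (possible since $\|r_ne_{yy}\|\le\|q_ne_{yy}\|$), and run the argument on the $r_n$.

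**The finite-to-one bound.** For the ``moreover'' clause, fix $f\colon X\to Y$ with $\|\Phi(e_{xx})e_{f(x)f(x)}\|\ge\delta$ for all $x$, take $y\in f(X)$ and $F=f^{-1}(\{y\})$. Here I cannot use $\|\sum_{x\in F}\Phi(e_{xx})\delta_y\|^2=\sum_{x\in F}\|\Phi(e_{xx})\delta_y\|^2$ verbatim, because the $\Phi(e_{xx})$ need not have orthogonal ranges (we dropped the compact-preserving hypothesis that made Claim~\ref{ClaimcccOverFin} yield strict orthogonality after passing to $\SJ$). Instead, for a fixed finite $F$ the set $F\in\SJ$ automatically (finite sets are in every ccc/Fin ideal, in fact in $\SJ$ since $\Phi$ and $\Phi^p$ agree on compacts), so $\Phi(\sum_{x\in F}e_{xx})=\sum_{x\in F}\Phi(e_{xx})$ and the ranges $\Phi(e_{xx})\ell_2(X)$, $x\in F$, are pairwise orthogonal because the $e_{xx}$, $x\in F$, are. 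Therefore the Pythagorean identity holds on this finite $F$: $1\ge\|\sum_{x\in F}\Phi(e_{xx})\delta_y\|^2=\sum_{x\in F}\|\Phi(e_{xx})\delta_y\|^2\ge\delta^2|F|$, giving $|F|\le\delta^{-2}$, hence $|F|\le\lceil\delta^{-2}\rceil$.

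**Main obstacle.** The delicate step is the invocation of \cite[Proposition 4.1]{BragaFarahVignati2018} in the positivity argument: I must make sure its hypotheses (ccc/Fin-many-ness via $\SJ$, SOT-summability, and the exact form of ``projections'') are met by $\Phi^p(e_{x_nx_n})$ rather than by the original $\Phi(e_{x_nx_n})$, and in particular that the sparse subspace it produces genuinely lives in $Y$ (not merely in $\cB(\ell_2(Y))$), so that property A of $Y$ applies. The footnote-style remark in the proof of Lemma~\ref{LemmaPickMapf} — that one may pass to a subsequence in $\SJ$ and then the argument ``holds verbatim'' — should transfer, but I would double-check that no step there secretly uses finite-dimensionality of the ranges; if it does, the rank-one compression fix described above repairs it at the cost of the weaker constant $\lceil\delta^{-2}\rceil$ in place of $\lfloor\delta^{-2}\rfloor$, which is exactly the constant in the statement.
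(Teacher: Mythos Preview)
Your approach diverges substantially from the paper's, and the positivity argument has a genuine gap.

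\textbf{The paper's route.} The paper does not go through Theorem~\ref{ThmEmbImpliesStrongEmb}, the ccc/Fin ideal, or \cite[Proposition~4.1]{BragaFarahVignati2018} at all. Instead it argues directly: by Proposition~\ref{PropAlmostCoarseLikePropA} (which uses property~A of $Y$), $\Phi$ is almost coarse-like, so there is a single $m$ with every $\Phi(e_{xx})$ being $\tfrac18$-$m$-approximable. Pick $b_x$ of propagation $\le m$ with $\|\Phi(e_{xx})-b_x\|<\tfrac14$; then $\|b_x\|>\tfrac34$. Property~A gives ONL, so there is $s>0$ and unit vectors $\xi_x$ with $\diam(\supp(\xi_x))<s$ and $\|b_x\xi_x\|>\tfrac12$, hence $\|\Phi(e_{xx})\xi_x\|>\tfrac14$. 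Since $Y$ is u.l.f., $M=\sup_x|\supp(\xi_x)|<\infty$, and a pigeonhole gives $y$ with $\|\Phi(e_{xx})\delta_y\|>1/(4M)$. This yields $\delta>0$ with no appeal to ghost projections, sparse subspaces, or SOT-summability.

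\textbf{The gap in your argument.} Your plan to invoke \cite[Proposition~4.1]{BragaFarahVignati2018} on the (possibly infinite-rank) projections $q_n=\Phi(e_{x_nx_n})$ does not go through as stated, and your rank-one compression repair does not close the gap. The crucial hypothesis in that proposition is that the SOT-sums $\sum_{n\in L}p_n$ land in $\cstu(Y)$ for $L$ in the ccc/Fin ideal; this is what lets one manufacture a ghost projection \emph{in $\cstu$ of a sparse subspace}. For the $q_n$ this holds by the definition of $\SJ$, but once you replace each $q_n$ by an arbitrary rank-one subprojection $r_n\le q_n$, there is no reason $\sum_{n\in L}r_n$ should lie in $\cstu(Y)$: subprojections of a projection in $\cstu(Y)$ need not assemble to anything with controlled propagation. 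So the ghost you would build need not live in the uniform Roe algebra of any sparse subspace, and property~A of $Y$ gives you nothing.

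\textbf{On the ``moreover'' clause.} Your worry that the $\Phi(e_{xx})$ might fail to have orthogonal ranges is misplaced: $\Phi$ is a $^*$-homomorphism, so $\Phi(e_{xx})\Phi(e_{x'x'})=\Phi(e_{xx}e_{x'x'})=0$ for $x\neq x'$, independent of any compact-preservation. The Pythagorean identity is therefore immediate, and the detour through $\SJ$ is unnecessary. The paper simply refers back to the computation in Lemma~\ref{LemmaPickMapf}.
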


\begin{proof}
By Proposition \ref{PropAlmostCoarseLikePropA}, $\Phi$ is almost coarse-like. Therefore, there exists $m\in\N$ such that $\Phi(e_{xx})$ can be $1/8$-$m$-approximated for all $x\in X$. For each $x\in X$, let $b_x\in \cstu(Y)$ be an element with propagation at most $m$ such that $\|\Phi(e_{xx})-b_x\|<1/4$. Since $\Phi$ is an embedding, $\|b_x\|> 3/4$ for all $x\in X$. Therefore, since $Y$ has property A, $Y$ has ONL and it follows that there exists $s>0$ such that for all $x\in X$, there exists a unit vector $\xi_x\in \ell_2(Y)$ with $\diam(\supp(\xi_x))<s$ such that $\|b_x\xi_x\|> 1/2$. So $\|\Phi(e_{xx})\xi_x\|>1/4$ for all $x\in X$. Since $Y$ is u.l.f., $M=\sup_{x\in X}|\supp(\xi_x)|<\infty$. Hence, for each $x\in X$, there exists $y\in \supp(\xi_x)$ such that $\|\Phi(e_{xx})\delta_y\|> 1/(4M)$. 

Given $\delta>0$, the fact that a map $f\colon X\to Y$ such that $\|\Phi(e_{xx})e_{f(x)f(x)}\|\geq \delta$ for all $x\in X$ is $\lceil \delta^{-2}\rceil$-to-one follows as in the proof of Lemma \ref{LemmaPickMapf}.
\end{proof}

Unfortunately, even though we are able of obtaining the map $f\colon X\to Y$ above, if one does not assume  the embedding $\Phi\colon \cstu(X)\to \cstu(Y)$ to be compact preserving, the map $f$ has no reason to be coarse of even for $X$ to have a finite partition $X=\bigsqcup_{n=1}^kX_n$ so that the restrictions $f\restriction X_n$ are coarse. 

\section{Embeddings onto hereditary subalgebras}\label{SectionEmbHere}

The main goal of this  section is to  prove Theorem \ref{ThmRigidityUniformRoeAlgEmbeddingsHereditary}. Although we have already seen that  embedding between uniform Roe algebras does not imply coarse embedding between the base spaces (Proposition \ref{PropositionEmbDoesNotImpCoarseEmb}), the situation changes if the image of the embedding is a hereditary subalgebra.

\begin{lemma}\label{LemmaPhiStronglyContAndU}
Let $(X,d)$ and $(Y,\partial)$ be  metric spaces and $\Phi\colon \cstu(X)\to\cstu(Y)$ be an embedding onto a hereditary \cstar-subalgebra of $\cstu(Y)$. Then 
\[
\Phi(\cK(\ell_2(X)))=\cK(\ell_2(Y))\cap \Phi(\cstu(X)).
\]
Moreover, there exists an  isometry $U\colon \ell_2(X)\to \ell_2(Y)$ such that $\Phi(a)=UaU^*$ for all $a\in \cstu(X)$. In particular,  $\Phi$ is strongly continuous and rank preserving. 
\end{lemma}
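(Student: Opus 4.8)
The plan is to argue in three stages: first identify the compact operators inside $\Phi(\cstu(X))$, then build the isometry $U$ from the structure of a $^*$-isomorphism onto a hereditary subalgebra, and finally read off strong continuity and rank preservation as corollaries.

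\textbf{Step 1: Compacts correspond to compacts.} Recall that $\cK(\ell_2(X))$ is the unique minimal (two-sided, closed, nonzero) ideal of $\cstu(X)$, and likewise $\cK(\ell_2(Y))$ in $\cstu(Y)$. Since $\Phi$ is an isomorphism onto the hereditary subalgebra $\B \coloneqq \Phi(\cstu(X)) \subseteq \cstu(Y)$, the image $\Phi(\cK(\ell_2(X)))$ is the unique minimal ideal of $\B$. On the other hand, $\cK(\ell_2(Y)) \cap \B$ is an ideal of $\B$; I would check it is the \emph{smallest} nonzero one. For this, note $\B$ contains a rank-one projection (the image of any $e_{xx}$), and because $\B$ is hereditary in $\cstu(Y)$, a rank-one projection in $\B$ is rank-one in $\cstu(Y)$ as well; hence $\cK(\ell_2(Y)) \cap \B \neq 0$. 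Any nonzero ideal of $\B$ contains a nonzero positive element, whose hereditary subalgebra contains a projection that dominates (a multiple of) some rank-one projection of $\B$; since all rank-one projections in $\cstu(Y)$ generate $\cK(\ell_2(Y))$ as an ideal there, and $\B$ hereditary forces those rank-one projections back into $\B$, one gets $\cK(\ell_2(Y)) \cap \B$ inside every nonzero ideal of $\B$. Thus $\cK(\ell_2(Y)) \cap \B$ is the minimal ideal of $\B$, and by uniqueness $\Phi(\cK(\ell_2(X))) = \cK(\ell_2(Y)) \cap \B$.

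\textbf{Step 2: Construct $U$.} Restricting $\Phi$ to $\cK(\ell_2(X))$ gives a $^*$-isomorphism onto $\cK(\ell_2(Y)) \cap \B$, a hereditary subalgebra of $\cK(\ell_2(Y))$. Hereditary \cstar-subalgebras of $\cK(\ell_2(Y))$ are exactly the algebras $\cK(H_0)$ for closed subspaces $H_0 \subseteq \ell_2(Y)$ (compact operators on $H_0$, extended by $0$). So $\Phi \rs \cK(\ell_2(X))$ is a $^*$-isomorphism $\cK(\ell_2(X)) \to \cK(H_0)$, and every such isomorphism is spatially implemented: there is a unitary $U \colon \ell_2(X) \to H_0$ with $\Phi(k) = UkU^*$ for all $k \in \cK(\ell_2(X))$. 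Viewing $U$ as an isometry $\ell_2(X) \to \ell_2(Y)$ with range projection $q = UU^*$ (which equals $\mathrm{SOT}\text{-}\lim_n \Phi(\chi_{X_n})$ for any exhaustion $X = \bigcup X_n$), it remains to propagate the identity $\Phi(a) = UaU^*$ from compacts to all of $\cstu(X)$. For $a \in \cstu(X)$ and any finite $F \subseteq X$, $a\chi_F$ and $\chi_F a$ are compact, so $\Phi(a)\Phi(\chi_F) = \Phi(a\chi_F) = U a\chi_F U^* = (UaU^*)(U\chi_F U^*) = (UaU^*)\Phi(\chi_F)$. Letting $F$ run through the $X_n$, the projections $\Phi(\chi_{X_n})$ increase SOT to $q$, so for every $\xi \in \ell_2(Y)$ in the range of $q$ we get $\Phi(a)\xi = UaU^*\xi$; since both $\Phi(a)$ and $UaU^*$ kill $q^\perp \ell_2(Y)$ (the former because $\B = q\cstu(Y)q$ as a hereditary subalgebra with unit $q$, the latter by construction), they agree everywhere, i.e.\ $\Phi(a) = UaU^*$.

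\textbf{Step 3: Consequences.} Once $\Phi = \Ad_U$ with $U$ an isometry, strong continuity is immediate: if $a_k \to a$ SOT in $\cstu(X)$ (WLOG contractions), then $UaU^*\xi = Ua(U^*\xi) = \lim_k Ua_k(U^*\xi) = \lim_k Ua_kU^*\xi$ for every $\xi \in \ell_2(Y)$. Rank preservation is equally immediate: $U$ restricts to an isometry onto its range, so $\Phi$ sends a rank-one projection $e_{xx}$ to the rank-one projection onto $\C U\delta_x$, and more generally preserves the rank of any finite-rank projection.

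\textbf{Main obstacle.} The delicate point is Step 1 — pinning down $\cK(\ell_2(Y)) \cap \B$ as the minimal ideal of $\B$ — because it is exactly where hereditariness is used in an essential (not merely bookkeeping) way: one must show that $\B$ being hereditary forces rank-one projections of $\cstu(Y)$ that are ``seen'' by $\B$ to actually lie in $\B$, and that every nonzero ideal of $\B$ sees at least one such projection. After that, Steps 2 and 3 are standard spatial-implementation arguments for compact operators together with a routine SOT-approximation to extend from $\cK$ to $\cstu(X)$.
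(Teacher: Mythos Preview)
Your proposal is correct and follows essentially the same three-stage route as the paper. Two points are worth flagging. First, in Step 1 the paper uses the standard ideal correspondence for hereditary subalgebras (every closed ideal of $B$ has the form $I\cap B$ for some ideal $I$ of $\cstu(Y)$, and any nonzero such $I$ contains $\cK(\ell_2(Y))$), which immediately gives $\cK(\ell_2(Y))\cap B\subseteq\Phi(\cK(\ell_2(X)))$; together with simplicity of $\Phi(\cK(\ell_2(X)))$ this yields equality. This is cleaner than your minimal-ideal route, whose passage ``any nonzero ideal of $B$ contains a projection that dominates some rank-one projection'' is not quite right as written, though your observation that hereditariness forces minimal projections of $B$ to be rank one in $\cB(\ell_2(Y))$ is exactly what is needed to see the intersection is nonzero.

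Second, in Step 2 you assert $B=q\cstu(Y)q$ with unit $q=UU^*$, but a priori the unit of $B$ is $\Phi(1)$, and you have not established $\Phi(1)=q$ (it does follow: a unital hereditary subalgebra is a corner, so $B=\Phi(1)\cstu(Y)\Phi(1)$, hence $\cK(\ell_2(Y))\cap B=\cK(\Phi(1)\ell_2(Y))$, and Step 1 forces $\Phi(1)\ell_2(Y)=H_0$). The paper avoids this by a different device: to show $\Phi(a)H^\perp=0$, it SOT-approximates $\Phi(a)$ by $\Phi(1)b_n\Phi(1)$ with $b_n\in\cK(\ell_2(Y))$; hereditariness places these in $\cK(\ell_2(Y))\cap B=\Phi(\cK(\ell_2(X)))$, so each is of the form $Uc_nU^*$ and kills $H^\perp$.
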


\begin{proof}
Let $A=\Phi(\cstu(X))$. Since $\cK(\ell_2(X))$ is an ideal in $\cstu(X)$, $\Phi(\cK(\ell_2(X)))$ is an ideal in $A$. Hence, as $A$ is hereditary, there exists an ideal $I$ of $\cstu(Y)$ such that $\Phi(\cK(\ell_2(X)))=I\cap A$. Since $I$ is an ideal in $\cstu(Y)$ and $\cstu(Y)$ contains all finite rank operators, $\cK(\ell_2(Y))\subseteq I$. So, \[\cK(\ell_2(Y))\cap A\subseteq \Phi(\cK(\ell_2(X))).\]
 Since $\cK(\ell_2(X)))$ is a simple \cstar-algebra, so is $\Phi(\cK(\ell_2(X)))$. Therefore, since $\cK(\ell_2(Y))\cap A$ is a nontrivial ideal of $\Phi(\cK(\ell_2(X)))$, we conclude that   
 \begin{equation}\label{EqHereCompPres}
 \cK(\ell_2(Y))\cap A= \Phi(\cK(\ell_2(X))).
 \end{equation}
 
Since $\Phi\restriction \cK(\ell_2(X))\colon \cK(\ell_2(X))\to \Phi(\cK(\ell_2(X)))$ is an isomorphism and $\Phi(\cK(\ell_2(X)))$ is a hereditary \cstar-subalgebra of $\cK(\ell_2(Y))$ (note if $\Phi$ is hereditary, it sends minimal projections to minimal projections),  it is standard and it follows, for example from adapting   \cite[Theorem 2.4.8]{MurphyBook}, that there exists  an (not necessarily surjective) isometry $U\colon \ell_2(X)\to \ell_2(Y)$ such that $\Phi(a)=UaU^*$ for all $a\in \cK(\ell_2(X))$. 

Let $H=U(\ell_2(X))$. Then $\Phi(a)H^\perp=0$ for all $a\in\cstu(X)$. Indeed, since $\Phi(a)=UaU^*$ for all $a\in \cK(\ell_2(X))$, it is clear that $\Phi(a)H^\perp=0$ for all $a\in \cK(\ell_2(X))$. Fix an arbitrary $a\in \cstu(X)$ and $\xi\in H^\perp$. Let $(b_n)_n$ be a sequence of compact operators in $\cstu(Y)$ converging to $\Phi(a)$ in the strong operator topology. Then $(\Phi(1)b_n\Phi(1))_n$ is a sequence of compact operators in $A$ which converges strongly to $\Phi(a)$. So, $\Phi(a)\xi=\lim_n\Phi(1)b_n\Phi(1)\xi$. By \eqref{EqHereCompPres}, $\Phi(1)b_n\Phi(1)\xi=0$ for all $n\in\N$, so $\Phi(a)\xi=0$.

Therefore, in order to show that  $\Phi(a)=UaU^*$ for all $a\in \cstu(X)$, it is enough to show that $\langle \Phi(a)e,f\rangle=\langle UaU^*e,f\rangle$ for all $e,f$ in an orthonormal basis of $H$.  
For that, let $a\in \cstu(X)$ and $x,y\in X$, and notice that
\begin{align*}
 \langle \Phi(a)U\delta_x,U\delta_y\rangle & =\langle e_{yy}U^*\Phi(a)Ue_{xx}\delta_x,\delta_y\rangle\\
&= \langle U^*Ue_{yy}U^*\Phi(a)Ue_{xx}U^*U\delta_x,\delta_y\rangle\\
 &=\langle U^*\Phi(e_{yy})\Phi(a)\Phi(e_{xx})U\delta_x,\delta_y\rangle\\
 &=\langle U^* \Phi(e_{yy}ae_{xx})U\delta_x,\delta_y\rangle\\
&=\langle e_{yy}ae_{xx}\delta_x,\delta_y\rangle\\
&=\langle UaU^*U\delta_x,U\delta_y\rangle
\end{align*}
(cf.  \cite[Lemma 3.1]{SpakulaWillett2013AdvMath}). 
 \end{proof}

\begin{lemma}\label{LemmaTheMapsAreExpanding}
Suppose $X$ and $Y$ are u.l.f. metric spaces and $\Phi\colon \cstu(X)\to \cstu(Y)$ is an embedding onto a hereditary \cstar-subalgebra of $\cstu(Y)$.  Then for all $r,\delta>0$  there exists $s>0$ such that for all $x_1,x_2\in X$ and all $y_1,y_2\in Y$, if  $d(x_1,x_2)\geq s$, $\|\Phi(e_{x_1x_1})e_{y_1y_1}\|\geq \delta$, and $\|\Phi(e_{x_2x_2})e_{y_2y_2}\|\geq \delta$, then $\partial (y_1,y_2)\geq r$.
\end{lemma}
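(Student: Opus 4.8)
The plan is to mirror the proof of Lemma~\ref{LemmaTheMapsAreCoarse}, but to replace the coarse-like input (which gave the ``only if close'' direction) with the conclusion of Lemma~\ref{LemmaPhiStronglyContAndU}, namely that $\Phi$ is implemented by an isometry $U$ and hence is \emph{rank preserving and strongly continuous}. The key additional ingredient is that since $\Phi(\cstu(X))$ is hereditary, $\Phi$ is compact preserving in a strong sense (equation \eqref{EqHereCompPres}), so in particular the composition $\cstu(X)\to\cstu(Y)/\cK(\ell_2(Y))$ kills $\cK(\ell_2(X))$ and descends to an embedding $\tilde\Phi\colon\roeq(X)\to\roeq(Y)$.

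First I would argue by contradiction: suppose the statement fails for some $r,\delta>0$. Then I get sequences $(x_n^1)_n,(x_n^2)_n$ in $X$ and $(y_n^1)_n,(y_n^2)_n$ in $Y$ with $d(x_n^1,x_n^2)\geq n$, $\|\Phi(e_{x_n^ix_n^i})e_{y_n^iy_n^i}\|\geq\delta$ for $i=1,2$, and $\partial(y_n^1,y_n^2)< r$ for all $n$. The idea is that the operators $a_n=e_{x_n^1x_n^1}+e_{x_n^2x_n^2}$ have propagation going to infinity, so for any single fixed propagation bound $m$ the element $\pi_X(\sum_{n}a_n)$ (along a suitable subsequence making the supports of the $a_n$ ``spread out'' in $X$) cannot be approximated by an element of propagation $m$ in the corona; but $\tilde\Phi$ of it would have to be, because the $y_n^i$ all live within a fixed distance of one another and the images $\Phi(e_{x_n^ix_n^i})$ have a uniformly positive ``diagonal mass'' near $y_n^i$. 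More concretely, by passing to a subsequence I may assume the pairs $(y_n^1,y_n^2)$ are far apart in $Y$ (using that $Y$ is u.l.f. and $X$ is infinite), so that $\sum_n\Phi(e_{x_n^1x_n^1})$ and $\sum_n\Phi(e_{x_n^2x_n^2})$ are honest strongly-convergent sums of mutually orthogonal finite-rank projections. Since $\Phi$ is rank-preserving each $\Phi(e_{xx})$ is a rank-one projection, so by \cite[Lemma~6.5]{BragaFarahVignati2018} we can compute $\|e_{y_n^2y_n^2}\Phi(e_{x_n^1x_n^2})e_{y_n^1y_n^1}\|=\|\Phi(e_{x_n^1x_n^1})e_{y_n^1y_n^1}\|\cdot\|\Phi(e_{x_n^2x_n^2})e_{y_n^2y_n^2}\|\geq\delta^2$, exactly as in Lemma~\ref{LemmaTheMapsAreCoarse}.

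Now the contradiction: on the one hand $d(x_n^1,x_n^2)\to\infty$ forces, after the usual diagonalization, that $b=\sum_n e_{x_n^1x_n^2}$ (again along a subsequence, and choosing the supports so that the sum is strongly convergent and quasi-local) has the property that $\Phi(b)$ is quasi-local and lies in $\cstu(Y)$, so in particular $\Phi(b)$ is $\delta^2$-$s$-quasi-local for some $s$; thus $|\langle\Phi(b)\delta_{y},\delta_{y'}\rangle|<\delta^2$ whenever $\partial(y,y')>s$. On the other hand, for $n$ large we have $\partial(y_n^1,y_n^2)<r$, but we also need the cross-terms $\Phi(e_{x_m^1x_m^2})$ for $m\neq n$ to contribute negligibly at $(y_n^1,y_n^2)$ — this is arranged exactly as in Lemma~\ref{LemmaSumOfQuasiLocalSOT} by choosing the subsequence so that $\|\chi_{\{y_n^1\}}\Phi(e_{x_m^1x_m^2})\chi_{\{y_n^2\}}\|<2^{-m}\delta^2/2$ for $m\neq n$ (possible since the relevant finite sets are fixed once $n$ is fixed and the tail of a strongly convergent series is eventually small). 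Then $|\langle\Phi(b)\delta_{y_n^1},\delta_{y_n^2}\rangle|\geq\delta^2-\sum_{m\neq n}2^{-m}\delta^2/2\geq\delta^2/2>0$, and since $r$ is fixed this contradicts quasi-locality of $\Phi(b)\in\cstu(Y)$ as soon as we also ensure $\partial(y_n^1,y_n^2)$ is \emph{not} forced to stay bounded in a way compatible with quasi-locality — which is automatic because quasi-locality only constrains pairs that are \emph{far}, so I should instead run the contradiction against the requirement $d(x_n^1,x_n^2)\to\infty$ directly: since $\partial(y_n^1,y_n^2)<r$ stays bounded, the pairs $(y_n^1,y_n^2)$ do not witness non-quasi-locality, so I must use the other direction, namely that $b$ having unbounded propagation in $X$ makes $\Phi(b)$ \emph{fail} to be $\e$-$k$-approximable for any fixed $k$, contradicting that $\Phi(b)\in\cstu(Y)$.

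I expect the main obstacle to be bookkeeping the two simultaneous diagonalizations — one to make the images of the diagonal projections genuinely orthogonal strongly-convergent sums (so that \cite[Lemma~6.5]{BragaFarahVignati2018} applies cleanly), and one to control the cross terms at the chosen matrix entries — and making sure the ``$d(x_n^1,x_n^2)\to\infty$ forces non-approximability'' step is set up against the correct target (the element $\Phi(\sum_n e_{x_n^1x_n^2})$, whose preimage has, by construction, arbitrarily large propagation along the subsequence). Concretely, I would phrase it as: if the lemma failed then by the above we would produce a finite-support contraction-valued family whose $\Phi$-images are not uniformly $\e$-$k$-approximable for the fixed $\e=\delta^2/2$ and \emph{every} $k$, contradicting almost coarse-likeness of $\Phi$ (Proposition~\ref{PropAlmostCoarseLike}, applicable since $\Phi$ is compact preserving by \eqref{EqHereCompPres}). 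This keeps the structure parallel to Lemma~\ref{LemmaTheMapsAreCoarse} while only swapping ``close in $X$'' for ``far in $X$'' and ``$\partial\leq s$'' for ``$\partial\geq r$''.
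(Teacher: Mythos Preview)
Your proposal has a genuine gap that cannot be repaired along the lines you suggest. The element $b=\sum_n e_{x_n^1x_n^2}$ you want to build is \emph{not} in $\cstu(X)$: the pairs $(x_n^1,x_n^2)$ have $d(x_n^1,x_n^2)\geq n\to\infty$, so $b$ has unbounded propagation and hence is not a norm limit of finite-propagation operators. Thus $\Phi(b)$ is not even defined, and your attempt to derive a contradiction from ``$\Phi(b)\in\cstu(Y)$'' collapses. Your fallback via almost coarse-likeness fails for the same reason: Proposition~\ref{PropAlmostCoarseLike} only controls $\Phi(a)$ uniformly for finite-support contractions $a$ of propagation at most a \emph{fixed} $m$, whereas your family $\{e_{x_n^1x_n^2}\}_n$ has propagation tending to infinity, so no uniform $\varepsilon$-$k$-approximation is promised. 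You noticed the tension yourself (``the pairs $(y_n^1,y_n^2)$ do not witness non-quasi-locality''), but the resolution you propose still rests on applying $\Phi$ to an element that does not live in its domain.

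The missing idea is to reverse the direction of the construction and use the hereditary hypothesis in an essential way. Since $\partial(y_n^1,y_n^2)\leq r$ for all $n$, after arranging (by subsequence) that the $y_n^i$ are distinct, the sum $\sum_n e_{y_n^1y_n^2}$ is a bounded-propagation element of $\cstu(Y)$. Because $\Phi(\cstu(X))$ is hereditary, $\Phi(1)\bigl(\sum_n e_{y_n^1y_n^2}\bigr)\Phi(1)=\Phi(a)$ for some $a\in\cstu(X)$. One then computes, using \cite[Lemma~6.5]{BragaFarahVignati2018} and a diagonalization to control cross-terms, that $\|e_{x_n^2x_n^2}ae_{x_n^1x_n^1}\|\geq\delta^2/2$ for all $n$. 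Since $d(x_n^1,x_n^2)\to\infty$, this contradicts $a\in\cstu(X)$. In short: the coarse direction (Lemma~\ref{LemmaTheMapsAreCoarse}) pushes from $X$ to $Y$; the expanding direction must pull back from $Y$ to $X$, and the hereditary assumption is precisely what makes that pullback possible.
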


\begin{proof}
Suppose otherwise. Then there exist 
 $r,\delta>0$, 
 sequences $(x^1_n)_n$ and $(x^2_n)_n$ in $X$, and sequences $(y^1_n)_n$ and $(y^2_n)_n$  in $Y$ such that $d(x_n^1,x_n^2)\geq n$, 
$\|\Phi(e_{x_n^1x_n^1})e_{y_n^1y_n^1}\|\geq \delta$,  $\|\Phi(e_{x_n^2x_n^2})e_{y_n^2y_n^2}\|\geq \delta$, and 
$\partial(y_n^1,y_n^2)\leq r$ for all $n\in\N$.

Since $d(x_n^1,x_n^2)\geq n$ for all $n\in\N$, by going to a subsequence, we can assume that either $(x^1_n)_n$ or $(x^2_n)_n$ are sequences of distinct elements. Without loss of generality, assume that  $(x^1_n)_n$ is a sequence of distinct elements.  

\begin{claim}
We can assume that both $(y^1_n)_n$ and $(y^2_n)_n$ are sequences of distinct elements.
\end{claim}

\begin{proof}
Suppose not. Then by going to a subsequence (and eventually swapping $y_n^1$ and $y_n^2$), we can assume that $(y^2_n)_n$ is constant. Then, since $Y$  is locally finite and $\partial (y^1_n,y^2_n)\leq r$ for all $n\in\N$, by going to a further subsequence, we can assume that $(y^1_n)_n$ is also constant. As $(x^1_n)_n$ is a sequence of distinct elements, $(\Phi(e_{x^1_nx^1_n}))_n$ is an orthogonal sequence of rank 1 projections. Hence, since $\|\Phi(e_{x^1_nx^1_n})e_{y^1_ny^1_n}\|\geq \delta$ for all $n\in\N$, this gives us a contradiction.
\end{proof}
Assume both $(y^1_n)_n$ and $(y^2_n)_n$ are sequences of distinct elements. Hence, since $(\Phi(e_{x^1_nx^1_n}))_n$ is an orthogonal sequence of   rank 1 projections, by going to a further subsequence, assume that 
\[\|e_{y^1_my^2_m}\Phi(e_{x^1_nx^1_n})\|<  2^{-n-m-1}{\delta^2}\]
for all $n\neq m$. 

Since $(y^1_n)_n$ and $(y^2_n)_n$ are sequences of distinct elements and $\partial(y^1_n,y^2_n)\leq r$ for all $n\in\N$, $\sum_{n\in\N}e_{y^1_ny^2_n}$ converges in the strong operator topology to an element in $\cstu(Y)$. As $\Phi(\cstu(Y))$ is hereditary, there exists $a\in \cstu(X)$ such that 
\[\Phi(a)=\Phi(1)\Big(\sum_{n\in\N}e_{y^1_ny^2_n}\Big)\Phi(1).\]

\begin{claim}\label{Claim1234}
$\inf_n\|e_{x^2_nx^2_n}ae_{x^1_nx^1_n}\|\geq \delta^2/2$.
\end{claim}

\begin{proof}
By \cite[Lemma 6.5]{BragaFarahVignati2018}, we have that
\[\|\Phi(e_{x^2_nx^2_n})e_{y^1_ny^2_n} \Phi(e_{x^1_nx^1_n})\|=
\|e_{y^2_ny^2_n}\Phi(e_{x^2_nx^2_n})\|\cdot\|e_{y^1_ny^1_n} \Phi(e_{x^1_nx^1_n})\|\geq \delta^2\]
for all $n\in\N$. Therefore, for all $n\in\N$, we have that 
\begin{align*}
\|e_{x^2_nx^2_n}ae_{x^1_nx^1_n}\|&=\Big\|\Phi(e_{x^2_nx^2_n})\Phi(a) \Phi(e_{x^1_nx^1_n})\Big\|\\
&=\Big\|\Phi(e_{x^2_nx^2_n})\Big(\sum_{m\in\N} e_{y^1_my^2_m}\Big) \Phi(e_{x^1_nx^1_n})\Big\|\\
&\geq \|\Phi(e_{x^2_nx^2_n})e_{y^1_ny^2_n} \Phi(e_{x^1_nx^1_n})\|- \sum_{m\neq n}\|\Phi(e_{x^2_nx^2_n})e_{y^1_my^2_m} \Phi(e_{x^1_nx^1_n})\| \\
&\geq \frac{\delta^2}{2},
\end{align*}
and the claim follows.
\end{proof}

Since $a\in \cstu(X)$ and $\lim_nd(x^1_n,x^2_n)=\infty$, Claim \ref{Claim1234} gives us a contradiction.
\end{proof}

\begin{proof}[Proof of Theorem \ref{ThmRigidityUniformRoeAlgEmbeddingsHereditary}(i)]
Let $\Phi\colon \cstu(X)\to \cstu(Y)$ be an isomorphism onto a hereditary \cstar-subalgebra. By Lemma \ref{LemmaPhiStronglyContAndU}, $\Phi$ is rank preserving. Let $f\colon X\to Y$ be the uniformly finite-to-one map given by Lemma \ref{LemmaPickMapf}. By Lemma \ref{LemmaTheMapsAreCoarse}, $f$ is coarse. By Lemma \ref{LemmaTheMapsAreExpanding}, $f$ is expanding, so we are done.
\end{proof}

\begin{proof}[Proof of Corollary \ref{CorRigidityUniformRoeAlgebra}]
By Theorem \ref{ThmRigidityUniformRoeAlgEmbeddingsHereditary}(i), it follows that $X$ coarsely embeds into $Y$. Since $Y$ coarsely embeds into a Hilbert space, so does $X$. The result now follows from \cite[Corollary 1.2]{BragaFarah2018}.
\end{proof}

\subsection{Property A and injectivity}
In this subsection, we show that if we assume $Y$ satisfies the stronger geometric condition of property A, then the coarse embedding given by Theorem \ref{ThmRigidityUniformRoeAlgEmbeddingsHereditary}(i) can be assumed to be injective (cf. \cite[Corollary 6.13]{WhiteWillett2017} and \cite[Theorem 1.11]{BragaFarahVignati2018}). We then prove Theorem~\ref{ThmRigidityUniformRoeAlgEmbeddingsHereditary}(ii).

\begin{assumption}\label{Assumption}
Throughout the remainder of this subsection, fix u.l.f. metric spaces $(X,d)$ and $(Y,\partial)$ with property A, and an embedding 
\[
\Phi\colon \cstu(X)\to \cstu(Y)
\]
 such that there exists an isometry $U\colon \ell_2(X)\to \ell_2(Y)$ so that $\Phi(a)=UaU^*$ for all $a\in \cstu(X)$.  Also, given   $x\in X$, $A\subseteq   X$,  
and $\delta>0$, we define
\[Y_{x,\delta}=\{y\in Y\mid \|\Phi(e_{xx})e_{yy}\|\geq \delta\}, \text{ and } Y_{A,\delta}=\cup_{x\in A}Y_{x,\delta}.\]
\end{assumption}

\begin{lemma}\label{LemmaUdelta}
For all $\gamma>0$ and all  $\varepsilon>0$ there exists $r>0$ such that 
for all   $A\subseteq X$ and all $B\subseteq   Y$  with $\|\Phi(\chi_A)\chi_B\Phi(1)\|>\gamma$, there exists $D\subseteq   X$ with $\diam(D)<r$ such that \[\|\Phi(\chi_{A\cap D})\chi_{B}\Phi(1)\|\geq (1-\varepsilon)\|\Phi(\chi_A)\chi_{B}\Phi(1)\|.\]
\end{lemma}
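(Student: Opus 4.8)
The plan is to push everything to $\ell_2(X)$ through the isometry $U$ and then combine the operator norm localization property of $Y$ (which produces a bounded-support localizing vector) with a uniform near-localization of the vectors $U\delta_x$, transported back to $X$ via the fact that $f$ is a coarse embedding. Two preliminary facts will be used. First, since $\Phi=\mathrm{Ad}(U)$ is strongly continuous and $Y$ has property~A, $\Phi$ is coarse-like (Theorem~\ref{ThmCoarseLikeEvenForNonComp}); in particular, for each $\tau>0$ there is $m=m(\tau)$ so that $\Phi(1)$ and every $\Phi(\chi_E)$ can be $\tau$-$m$-approximated. Feeding the approximant of $\Phi(e_{xx})=(U\delta_x)(U\delta_x)^*$ into the operator norm localization property of $Y$, and using $\|U\delta_x\|=1$, yields a radius $s_1=s_1(\tau)$ and, for each $x$, a set $S_x\subseteq Y$ with $\diam_Y(S_x)\le s_1$ and $\|U\delta_x-\chi_{S_x}U\delta_x\|<\tau$; since $\|\Phi(e_{xx})e_{f(x)f(x)}\|=|(U\delta_x)(f(x))|\ge\delta$ (Lemma~\ref{LemmaPickMapfForPropertyA}, Assumption~\ref{Assumption}), one may arrange $f(x)\in S_x$, i.e.\ $S_x\subseteq B_{s_1}(f(x))$. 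Second, $f$ is a coarse embedding: it is uniformly finite-to-one (Lemma~\ref{LemmaPickMapfForPropertyA}) and expanding (Lemma~\ref{LemmaTheMapsAreExpanding}), so that preimages under $f$ of sets of bounded diameter have bounded diameter in $X$. This is the step where the geometric hypotheses really enter.

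\textbf{Reduction.} Fix $\gamma,\varepsilon>0$ and $A\subseteq X$, $B\subseteq Y$ with $\|\Phi(\chi_A)\chi_B\Phi(1)\|>\gamma$. Because $\Phi(\chi_A)$ and $\Phi(1)$ are (uniformly) approximable by finite-propagation operators and $\chi_B$ has propagation $0$, the operator $\Phi(\chi_A)\chi_B\Phi(1)\in\cstu(Y)$ is $\eta_0$-$m_0$-approximable for a quality $\eta_0$ as small as we wish (at the cost of $m_0=m_0(\eta_0)$); choosing $\eta_0$ of order $\varepsilon\gamma$ and applying the operator norm localization property of $Y$ to the approximant gives a unit vector $\psi\in\ell_2(Y)$ with $\diam_Y(\supp\psi)\le s_0=s_0(\varepsilon,\gamma)$ and $\|\Phi(\chi_A)\chi_B\Phi(1)\psi\|\ge(1-\varepsilon/4)\|\Phi(\chi_A)\chi_B\Phi(1)\|$. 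Set $\rho=\Phi(\chi_A)\chi_B\Phi(1)\psi$. Since $\rho\in\operatorname{Im}\Phi(\chi_A)=U\ell_2(A)$, write $\rho=U\eta$ with $\eta=U^*\rho\in\ell_2(A)$ and $\|\eta\|=\|\rho\|$. For every $D\subseteq X$ one has $\Phi(\chi_{A\cap D})\chi_B\Phi(1)\psi=\Phi(\chi_{A\cap D})\rho=U\chi_D\eta$, hence $\|\Phi(\chi_{A\cap D})\chi_B\Phi(1)\|\ge\|\chi_D\eta\|$. So it suffices to find $D\subseteq X$ with $\diam_X(D)$ bounded by a function of $\varepsilon,\gamma$ only and $\|\chi_D\eta\|\ge(1-\varepsilon/2)\|\eta\|$; combined with $\|\eta\|=\|\rho\|\ge(1-\varepsilon/4)\gamma$ this gives $\|\Phi(\chi_{A\cap D})\chi_B\Phi(1)\|\ge(1-\varepsilon)\|\Phi(\chi_A)\chi_B\Phi(1)\|$.

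\textbf{Localizing $\eta$ in $X$.} Write $\eta=\chi_A U^*\chi_B U\zeta$ with $\zeta=U^*\psi$, $\|\zeta\|\le1$. Using $U\delta_x\approx_\tau\chi_{S_x}U\delta_x$ with $S_x\subseteq B_{s_1}(f(x))$, together with the near-orthonormality of $\{U\delta_x\}_{x\in X}$ to control the accumulation of the tail errors $(1-\chi_{S_x})U\delta_x$ (an almost-orthogonality estimate bounds the total error by a multiple of $\tau$), one shows in turn: $\zeta$ is $O(\tau)$-close to a vector supported on $\{x:S_x\cap\supp\psi\neq\emptyset\}\subseteq f^{-1}\bigl(B_{s_1+s_0}(\supp\psi)\bigr)$; consequently $U\zeta$ is $O(\tau)$-close to a vector supported on a subset of $Y$ of diameter $\le 2s_1+s_0$, and so is $\chi_B U\zeta$, say supported on $G_1$ with $\diam_Y(G_1)\le 3s_1+s_0$; pulling back once more through $U^*$, $\eta=\chi_A U^*(\chi_B U\zeta)$ is $O(\tau)$-close to a vector supported on $\{x:S_x\cap G_1\neq\emptyset\}\subseteq f^{-1}\bigl(B_{s_1}(G_1)\bigr)$. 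Since $f$ is expanding and $s_0,s_1,\diam_Y(G_1)$ depend only on $\varepsilon,\gamma$, the set $D:=A\cap f^{-1}(B_{s_1}(G_1))$ has $\diam_X(D)\le r$ for some $r=r(\varepsilon,\gamma)$, and $\|\chi_D\eta\|\ge\|\eta\|-O(\tau)$. Taking $\tau$ small enough that $O(\tau)\le(\varepsilon/2)\|\eta\|$ (legitimate as $\|\eta\|>\gamma/2$) completes the proof, with $r$ the sought constant. The main obstacle is precisely this last paragraph: converting the norm-level approximations ``$\Phi(\chi_E)$ is $\tau$-$m$-approximable'' into the rank-level statement ``$U\delta_x$ lies within $\tau$ of a vector supported on a ball $B_{s_1}(f(x))$'', and then bounding the error produced by three successive applications of $U$ and $U^{*}$ using the orthogonality of the family $\{U\delta_x\}$.
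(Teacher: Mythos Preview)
Your approach is quite different from the paper's, and the crucial step you flag as ``the main obstacle'' is indeed a genuine gap rather than a routine estimate.

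The paper's proof is much shorter and works on the $X$-side. It defines $\Psi\colon\cstu(Y)\to\cstu(X)$ by $\Psi(b)=U^*bU$, observes the identity $\|\chi_A\Psi(\chi_B)\|=\|\Phi(\chi_A)\chi_B\Phi(1)\|$ (a two-line computation using that $U$ is an isometry onto $\mathrm{Im}\,\Phi(1)$), checks that $\Psi$ is compact preserving and strongly continuous so that Proposition~\ref{PropCoarseLike} makes it coarse-like, and then applies ONL of $X$ (via \cite[Lemma~7.2]{BragaFarahVignati2018}) to $\chi_A\Psi(\chi_B)$ to produce $D$. Property~A of $X$ is what does the work; the map $f$ and Lemma~\ref{LemmaTheMapsAreExpanding} never appear.

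Your route---localize in $Y$ via ONL, then pull back through $U^*$ and the coarse embedding $f$---runs into the following problem. You need $\zeta=U^*\psi$ (and then $\eta$) to be $O(\tau)$-close to a vector supported on a set of the form $f^{-1}(\text{bounded})$. Concretely, with $D$ the candidate support you must bound $\|\chi_{X\setminus D}\zeta\|^2=\sum_{x\notin D}|\langle\psi,U\delta_x\rangle|^2$. For $x\notin D$ you have $S_x\cap\supp\psi=\emptyset$, so $\langle\psi,U\delta_x\rangle=\langle\psi,(1-\chi_{S_x})U\delta_x\rangle$ and hence $|\langle\psi,U\delta_x\rangle|\le\tau$. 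But this per-term bound is useless against an infinite sum. Orthonormality of $\{U\delta_x\}$ only yields $\sum_x|\langle\psi,U\delta_x\rangle|^2=\|U^*\psi\|^2\le 1$, not smallness of the tail. The tail vectors $v_x=(1-\chi_{S_x})U\delta_x$ are \emph{not} almost orthogonal in any way that would make $\|\sum_x|v_x\rangle\langle v_x|\|$ of order $\tau$; one only gets $\sum_x|v_x\rangle\langle v_x|\le\Phi(1)$. The same difficulty recurs at each of your three passages through $U$ or $U^*$. What you are really trying to prove is that $U^*$ carries bounded-support vectors in $Y$ to approximately bounded-support vectors in $X$, uniformly; this is precisely the content of ``$\Psi=U^*(\cdot)U$ is coarse-like'', which is how the paper packages it.

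In short: the reduction in your second paragraph is fine, but the localization in the third paragraph cannot be completed by almost-orthogonality alone. The clean fix is to bypass the vector-level argument and work with the operator $\Psi(\chi_B)=U^*\chi_B U$ directly, using property~A of $X$---which is exactly the paper's proof.
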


\begin{proof}
Fix $\gamma>0$ and $\varepsilon>0$. Define a map $\Psi\colon \cstu(Y)\to \cstu(X)$ by letting $\Psi(b)=U^*bU$ for all $b\in \cstu(Y)$. So, $\Psi$ is a compact preserving strongly continuous linear map, hence, by Proposition \ref{PropCoarseLike}, $\Psi$ is coarse-like. Since $X$ has property A, $X$ has the operator norm localization \cite[Theorem 4.1]{Sako2014}, so  \cite[Lemma 7.2]{BragaFarahVignati2018} gives $r>0$ such that for all $A\subseteq X$ and all $B\subseteq Y$ with $\|\chi_A\Psi(\chi_B)\|>\gamma$, there exists $D\subseteq X$ with $\diam(D)<r$ such that 
\begin{equation}\label{EqD}\|\chi_{A\cap D}\Psi(\chi_{B})\|\geq (1-\varepsilon)\|\chi_A\Psi(\chi_{B})\|.
\end{equation}
Fix  $A\subseteq X$ and  $B\subseteq Y$ with $\|\Phi(\chi_A)\chi_B\Phi(1)\|>\gamma$. Using that $U$ is an isometry onto the image of $\Phi(1)$, we have that
\begin{align*}
\|\chi_A\Psi(\chi_B)\|&=\|\chi_AU^*\chi_B\Phi(1)U\|\\&=\|U\chi_AU^*\chi_B\Phi(1)U\|\\
&=\|U\chi_AU^*\chi_B\Phi(1)\|\\
&=\|\Phi(\chi_A)\chi_B\Phi(1)\|>\gamma.
\end{align*} Let $D\subseteq X$ be as in \eqref{EqD} with $\diam(D)<r$. Analogously as in our previous computation, we have that 
\[\|\Phi(\chi_{A\cap D})\chi_{B}\Phi(1)\|=\|\chi_{A\cap D}\Psi(\chi_{B})\|\text{ and }\|\Phi(\chi_A)\chi_{B}\Phi(1)\|=\|\chi_A\Psi(\chi_{B})\|,\]
so we are done.\end{proof}

The following follows from either \cite[Lemma 6.7]{WhiteWillett2017} or \cite[Lemma 7.4(1)]{BragaFarahVignati2018}. Indeed, the proof of
\cite[Lemma 7.4(1)]{BragaFarahVignati2018} holds for any rank preserving $^*$-homomorphism $\Psi\colon \ell_\infty(X)\to \cstu(Y)$. Hence, we choose to omit its proof.

\begin{lemma}\label{LemmaONL}
For all $\varepsilon>0$ there exists $\delta>0$ such that  $\|\Phi(e_{xx})\chi_{Y_{x,\delta}}\|\geq 1-\varepsilon$, for all $x\in X$.
\qed
\end{lemma}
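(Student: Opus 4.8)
The plan is to reduce everything to a statement about the single unit vector $\xi_x:=U\delta_x\in\ell_2(Y)$. Since $U$ is an isometry, $\Phi(e_{xx})=Ue_{xx}U^*$ is the rank-one projection onto $\C\xi_x$, so a routine computation with rank-one projections gives $\|\Phi(e_{xx})e_{yy}\|=|\langle\xi_x,\delta_y\rangle|$ for each $y\in Y$, whence $Y_{x,\delta}=\{y\in Y\mid |\langle\xi_x,\delta_y\rangle|\geq\delta\}$, and likewise $\|\Phi(e_{xx})\chi_B\|=\|\chi_B\xi_x\|$ for every $B\subseteq Y$. As $\|\xi_x\|=1$, it therefore suffices to show: for every $\varepsilon>0$ there is $\delta>0$ such that $\|\chi_{Y\setminus Y_{x,\delta}}\xi_x\|<\varepsilon$ for all $x\in X$ — that is, the vectors $\xi_x$ are, uniformly in $x$, concentrated on the set of coordinates that are not tiny.

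The first real step localizes $\xi_x$. I would fix $\theta\in(0,1/9)$ (to be pinned down at the very end). Since $Y$ has property A, Proposition~\ref{PropAlmostCoarseLikePropA} makes $\Phi$ almost coarse-like, so there is $m=m(\theta)\in\N$ and, for each $x$, an operator $b_x\in\cstu(Y)$ with $\propg(b_x)\leq m$ and $\|\Phi(e_{xx})-b_x\|\leq\theta$; in particular $\|b_x\|\geq 1-\theta$. Property A of $Y$ also yields the operator norm localization property, and applying it with $r=m$ and tolerance $1/2$ gives $s=s(m)>0$ and, for each $x$, a unit vector $\nu_x$ with $\diam(\supp\nu_x)\leq s$ and $\|b_x\nu_x\|\geq\tfrac12\|b_x\|\geq\tfrac{1-\theta}{2}$. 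Writing $T_x:=\supp\nu_x$, this gives
\[
\|\chi_{T_x}\xi_x\|\ \geq\ |\langle\nu_x,\xi_x\rangle|\ =\ \|\Phi(e_{xx})\nu_x\|\ \geq\ \|b_x\nu_x\|-\theta\ \geq\ \tfrac{1-3\theta}{2}\ =:\ c_1\ >\ 0,
\]
using $\nu_x=\chi_{T_x}\nu_x$ in the first inequality and that $\Phi(e_{xx})$ is the projection onto $\C\xi_x$ in the middle equality.

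Then I would use finite propagation together with uniform local finiteness. Since $\propg(b_x)\leq m$ and $\partial(y,y')>m$ whenever $y\in T_x$ and $y'$ lies outside the $m$-neighbourhood $B_m(T_x):=\{y\in Y\mid\partial(y,T_x)\leq m\}$, we get $\chi_{T_x}\,b_x\,\chi_{Y\setminus B_m(T_x)}=0$, hence
\[
\|\chi_{T_x}\xi_x\|\cdot\|\chi_{Y\setminus B_m(T_x)}\xi_x\|=\|\chi_{T_x}\Phi(e_{xx})\chi_{Y\setminus B_m(T_x)}\|=\|\chi_{T_x}(\Phi(e_{xx})-b_x)\chi_{Y\setminus B_m(T_x)}\|\leq\theta,
\]
so $\|\chi_{Y\setminus B_m(T_x)}\xi_x\|\leq\theta/c_1$. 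Moreover $B_m(T_x)\subseteq B_{s+m}(z)$ for any $z\in T_x$, so $|B_m(T_x)|\leq N$ where $N:=\sup_{y\in Y}|B_{s+m}(y)|<\infty$ depends only on $\theta$. Finally, for any $\delta>0$, splitting the sum over $B_m(T_x)$ and its complement,
\[
\|\chi_{Y\setminus Y_{x,\delta}}\xi_x\|^2=\sum_{y:\,|\langle\xi_x,\delta_y\rangle|<\delta}|\langle\xi_x,\delta_y\rangle|^2\leq |B_m(T_x)|\,\delta^2+\|\chi_{Y\setminus B_m(T_x)}\xi_x\|^2\leq N\delta^2+(\theta/c_1)^2.
\]
Given $\varepsilon>0$ one first chooses $\theta\in(0,1/9)$ with $(\theta/c_1)^2<\varepsilon^2/2$ — this fixes $m$, $s$, $N$ — and then $\delta>0$ with $N\delta^2<\varepsilon^2/2$, giving $\|\Phi(e_{xx})\chi_{Y_{x,\delta}}\|=\|\chi_{Y_{x,\delta}}\xi_x\|\geq 1-\varepsilon$ for all $x$.

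The one delicate point is the order of quantifiers: $\delta$ must be chosen after $\theta$ (equivalently after $m$, $s$, $N$), since $N$ blows up as $\theta\to0$, and the argument above is arranged so that there is no circular dependence; everything else is bookkeeping with rank-one projections, finite propagation, and uniform local finiteness. This is essentially the argument underlying \cite[Lemma~6.7]{WhiteWillett2017} and \cite[Lemma~7.4(1)]{BragaFarahVignati2018}, which I would cite as an alternative.
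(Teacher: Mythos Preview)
Your argument is correct. The paper itself does not give a proof but simply cites \cite[Lemma~6.7]{WhiteWillett2017} and \cite[Lemma~7.4(1)]{BragaFarahVignati2018}; you have written out precisely that argument (as you note in your final paragraph), using Assumption~\ref{Assumption} to access property~A of $Y$ for both the almost coarse-like approximation and ONL, so your proof and the paper's cited approach coincide.
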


\begin{lemma}\label{LemmaCardinalityXBDelta}
For all $\eps>0$ there exists $\delta>0$ such that 
\[\|\Phi(\chi_A)(1-\chi_{Y_{A,\delta}})\Phi(1)\|<\eps,\]
for all finite subsets   $A\subseteq   X$. In particular, if $\eps>0$, then   $|A|\leq |Y_{A,\delta}|$ for all   $A\subseteq   X$. 
\end{lemma}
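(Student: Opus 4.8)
The plan is to combine the localization result of Lemma~\ref{LemmaUdelta} with the pointwise estimate of Lemma~\ref{LemmaONL} and a short dimension count, using that $X$ is u.l.f.\ to keep the ``bad'' part of $A$ uniformly finite. First I would record the structural facts from Assumption~\ref{Assumption}: writing $\xi_x=U\delta_x$, the family $(\xi_x)_{x\in X}$ is orthonormal, $\Phi(e_{xx})=\xi_x\xi_x^*$, and for finite $A$ the operator $\Phi(\chi_A)=\sum_{x\in A}\xi_x\xi_x^*$ is the projection onto $\spann\{\xi_x:x\in A\}\subseteq H:=U(\ell_2(X))=\ran\Phi(1)$. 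Since $\Phi(e_{xx})\chi_{Y_{x,\delta}}=\xi_x(\chi_{Y_{x,\delta}}\xi_x)^*$ is rank one of norm $\|\chi_{Y_{x,\delta}}\xi_x\|$, Lemma~\ref{LemmaONL} applied with a parameter $\eta>0$ gives $\delta>0$ with $\|\chi_{Y_{x,\delta}}\xi_x\|\ge 1-\eta$, hence $\|(1-\chi_{Y_{x,\delta}})\xi_x\|^2\le 1-(1-\eta)^2\le 2\eta$, for all $x\in X$; and since $Y_{x,\delta}\subseteq Y_{A,\delta}$ whenever $x\in A$, also $\|(1-\chi_{Y_{A,\delta}})\xi_x\|\le\sqrt{2\eta}$ for $x\in A$. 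Expanding an arbitrary vector in the orthonormal family $(\xi_x)_{x\in A'}$ then yields, for every finite $A'\subseteq X$,
\[
\|\Phi(\chi_{A'})(1-\chi_{Y_{A',\delta}})\|^2\le\sum_{x\in A'}\|(1-\chi_{Y_{A',\delta}})\xi_x\|^2\le 2\eta\,|A'|,
\]
a bound that is useful only when $|A'|$ is controlled.

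Next, fix $\eps>0$ and suppose the first assertion fails for every choice of $\delta$. Apply Lemma~\ref{LemmaUdelta} with $\gamma=\eps/2$ and $\eps_1=\tfrac12$ to obtain $r>0$; set $N=\sup_{x\in X}|\{x'\in X:d(x,x')\le r\}|$, which is finite since $X$ is u.l.f., and choose $\eta=\eps^2/(16N)$, which through Lemma~\ref{LemmaONL} fixes $\delta>0$ as above. Pick a finite $A\subseteq X$ with $\|\Phi(\chi_A)\chi_{Y\setminus Y_{A,\delta}}\Phi(1)\|=\|\Phi(\chi_A)(1-\chi_{Y_{A,\delta}})\Phi(1)\|\ge\eps>\gamma$. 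Lemma~\ref{LemmaUdelta} (with $B=Y\setminus Y_{A,\delta}$) provides $D\subseteq X$ with $\diam(D)<r$ such that, setting $A'=A\cap D$,
\[
\|\Phi(\chi_{A'})\chi_{Y\setminus Y_{A,\delta}}\Phi(1)\|\ge(1-\eps_1)\eps=\tfrac{\eps}{2}.
\]
In particular $D\neq\emptyset$, so $|A'|\le|D|\le N$. As $A'\subseteq A$ we have $Y_{A',\delta}\subseteq Y_{A,\delta}$, so $\chi_{Y\setminus Y_{A,\delta}}=(1-\chi_{Y_{A',\delta}})\chi_{Y\setminus Y_{A,\delta}}$, and combining this with the display of the first paragraph,
\[
\tfrac{\eps}{2}\le\|\Phi(\chi_{A'})\chi_{Y\setminus Y_{A,\delta}}\Phi(1)\|\le\|\Phi(\chi_{A'})(1-\chi_{Y_{A',\delta}})\|\le\sqrt{2\eta N}=\sqrt{\eps^2/8}<\tfrac{\eps}{2},
\]
a contradiction; this proves the first assertion.

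Finally, for the last statement apply the first part with, say, $\eps\le 1$ to obtain $\delta>0$ with $\|\Phi(\chi_A)(1-\chi_{Y_{A,\delta}})\Phi(1)\|<1$ for all finite $A$. If a finite $A\subseteq X$ had $|Y_{A,\delta}|<|A|$, then $\chi_{Y_{A,\delta}}$ restricted to the $|A|$-dimensional space $\ran\Phi(\chi_A)\subseteq H$ would have a nonzero kernel, producing a unit vector $\eta\in\ran\Phi(\chi_A)$ with $\chi_{Y_{A,\delta}}\eta=0$; then $\Phi(1)\eta=\eta=(1-\chi_{Y_{A,\delta}})\eta$ and $\Phi(\chi_A)\eta=\eta$, so $\|\Phi(\chi_A)(1-\chi_{Y_{A,\delta}})\Phi(1)\|\ge 1$, contradicting the previous line. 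Hence $|A|\le|Y_{A,\delta}|$ for finite $A$, and for infinite $A$ this follows by applying it to all finite subsets of $A$ (recall $X$, hence $A$, is countable).

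The main obstacle is precisely the factor $\sqrt{|A'|}$ in the first display: a direct Cauchy--Schwarz estimate on $A$ itself degrades with $|A|$, so Lemma~\ref{LemmaUdelta} is essential in order to first replace $A$ by a subset whose cardinality is bounded by a constant $N$ that depends only on data fixed before $\delta$. The only delicate point is respecting this order of quantifiers when selecting the constants; the remaining manipulations are routine.
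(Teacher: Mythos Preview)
Your proof is correct and considerably more direct than the paper's. The key ingredients are the same---Lemma~\ref{LemmaUdelta} to localize $A$ to a subset $A'$ of uniformly bounded cardinality, Lemma~\ref{LemmaONL} to control each $\|(1-\chi_{Y_{x,\delta}})\xi_x\|$, and the u.l.f.\ hypothesis to bound $|A'|$---but you assemble them differently. The paper argues by contradiction with sequences: it assumes failure for every $\delta$, produces sequences $(A_n)_n$ violating the estimate with $\delta=1/n$, invokes an external lemma (\cite[Lemma~3.4]{BragaFarahVignati2018}) together with a separate claim to arrange the $A_n$'s disjoint and to pass to disjoint finite pieces $Y_n$ of $Y$, and only then applies Lemma~\ref{LemmaUdelta}. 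You instead choose the constants in the right order ($r$ and $N$ before $\eta$ before $\delta$), and once $\delta$ is fixed you derive a contradiction from a single offending $A$---no sequences, no disjointness, no external lemma. Your Hilbert--Schmidt bound $\|\Phi(\chi_{A'})(1-\chi_{Y_{A',\delta}})\|\le\sqrt{2\eta\,|A'|}$ is also a bit sharper than the paper's triangle-inequality estimate $\sum_{x\in A'}\|\Phi(e_{xx})(1-\chi_{Y_{x,\delta}})\|$, though either suffices. One cosmetic point: the phrase ``suppose the first assertion fails for every choice of $\delta$'' is stronger than you use; you really only need that it fails for the particular $\delta$ you construct.
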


\begin{proof}
We prove the following stronger statement: for all $\eps>0$ there exists $\delta>0$ such that for all finite subsets   $A,B\subseteq   X$ with $A\subset B$,
\[\|\Phi(\chi_A)(1-\chi_{Y_{B,\delta}})\Phi(1)\|<\eps.\]
Suppose not, and fix $\eps>0$ and  sequences $(A_n)_n$ and $(B_n)_n$ of finite subsets of $X$ with, for all $n\in\N$, $A_n\subset B_n$ and $\|\Phi(\chi_{A_n})(1-\chi_{Y_{B_n,1/n}})\Phi(1)\|\geq 2\eps$.

\begin{claim}\label{C:7.7}
For every $n\in\N$ and every finite $F\subseteq   X $ there exist $m>n$,  a finite  $A\subseteq   X\setminus F$  and a finite $B\subset X$ with $A\subset B$ such that $\|\Phi(\chi_{A} )(1-\chi_{Y_{B,1/m}})\Phi(1)\|> \eps$.
\end{claim}

\begin{proof}
If not, fix  $n\in\N$ and $F\subseteq X$ such that  \[\|\Phi(\chi_{A} )(1-\chi_{Y_{B,1/m}})\Phi(1)\|\leq \eps\] for all finite $A\subseteq   X\setminus F$, all $B\subset X$ with $A\subset B$ and all $m>n$. Since $F$ is finite, pick $m>n$ large enough so that $\|\Phi(\chi_{C} )(1-\chi_{Y_{C,1/m}})\|< \eps$ for all $C\subseteq   F$. In particular,  $\|\Phi(\chi_{C} )(1-\chi_{Y_{D,1/m}})\|< \eps$ for all $C\subseteq   F$ and all $D\subset X$ with $C\subset D$. The contradiction comes from
\begin{align*}
\|\Phi(\chi_{A_m})(1-\chi_{Y_{B_m,1/m}})\Phi(1)\|&\leq \|\Phi(\chi_{A_m\cap F})(1-\chi_{Y_{B_m,1/m}})\Phi(1)\|\\
&\ \ \ \ +\|\Phi(\chi_{A_m\setminus F})(1-\chi_{Y_{B_m,1/m}})\Phi(1)\|< 2\eps.\qedhere
\end{align*}
\end{proof}

By Claim~\ref{C:7.7}, redefining the sequence $(A_n)_n$ and passing to a subsequence, we assume that the $A_n$'s are finite and disjoint, and that 
 \[
 \|\Phi(\chi_{A_n})(1-\chi_{Y_{B_n,1/n}})\Phi(1)\|> \eps,
 \] 
 for all $n\in\N$. 
 
 Let $W_n=Y\setminus Y_{B_n,\delta}$. Since $(A_n)_n$ is a disjoint sequence of finite subsets and $\|\Phi(\chi_{A_n})\chi_{W_n}\Phi(1)\|>\eps$ for all $n$, \cite[Lemma 3.4]{BragaFarahVignati2018} allows us to pick a sequence $(Y_n)_n$ of disjoint finite subsets of $Y$ such that $\|\Phi(\chi_{A_n})\chi_{W_n\cap	 Y_n}\Phi(1)\|>\eps/2$ for all $n\in\N$.  Therefore,  Lemma~\ref{LemmaUdelta} implies that there exists $r>0$ and a sequence $(D_n)_n$ of subsets of $X$ such that  $\diam(D_n)<r$ and 
\begin{equation}\label{EqONL1}
\|\Phi(\chi_{A_n\cap D_n})\chi_{W_n\cap Y_n}\Phi(1)\|\geq (1-\eps)\|\Phi(\chi_{A_n})\chi_{W_n\cap Y_n}\Phi(1)\|,\tag{$*$}
\end{equation}
for all $n\in\N$.

Since $X$ is u.l.f. and $\sup_n\diam (D_n)\leq r$, there exists $N\in\N$ so that $\sup_n|D_n|<N$. Pick $\theta>0$ small enough so that $4N\theta^{1/2} <\eps(1-\eps) $.
By Lemma \ref{LemmaONL}, pick $n\in \N$ large enough so that $\|\Phi(e_{xx})\chi_{Y_{x,1/n}}\|\geq 1-\theta$ for all $x\in X$. Since, for all $x\in X$, $\Phi(e_{xx})$ is a rank 1 projection, then $\Phi(e_{xx})\chi_{Y_{x,1/n}}\Phi(e_{xx})=\lambda\Phi(e_{xx})$ for some $\lambda\geq (1-\theta)^2$, and therefore  $\|\Phi(e_{xx})(1-\chi_{Y_{x,1/n}})\|<2\theta^{1/2}$ for all $x\in X$. It follows that 

\begin{align*}\label{EqONL3}
\|\Phi(\chi_{A_n\cap D_n })\chi_{W_n\cap Y_n}\Phi(1)\|&\leq\|\Phi(\chi_{A_n\cap D_n })\chi_{W_n}\|\tag{$**$}\\ 
&\leq \sum_{x\in A_n\cap D_n}\|\Phi(e_{xx})(1-\chi_{Y_{x,1/n}})\|\\
&\leq 4\theta^{1/2}|D_n|\\
&\leq \frac{\eps(1-\eps)}{2},
\end{align*}
for all $n\in\N$. Therefore, inequalities \eqref{EqONL1} and \eqref{EqONL3} imply that
  \[\|\Phi(\chi_{A_n})\chi_{B_n\cap Y_n}\Phi(1)\|\leq \frac{\eps}{2}\] for all $n\in\N$; contradiction.

We are left to show that, if $\eps>0$ then our choice of $\delta$ implies that $|A|\leq |Y_{A,\delta}|$ for all $A\subseteq  X$. Fix $\eps>0$ and let $\delta>0$  be given by the first statement of the lemma. Notice that $|A|=\rank \Phi(\chi_A)$ and $|Y_{A,\delta}|=\rank( \chi_{Y_{A,\delta}})$. Suppose $\rank( \Phi(\chi_A))>\rank( \chi_{Y_{A,\delta}})$.  Let $H=U(\ell_2(X))$, so $\Phi(1)$ is the projection onto $H$.  Since $\mathrm{corank}(1-\chi_{Y_{A,\delta}})=\rank \chi_{Y_{A,\delta}} $, we can pick
\[\xi\in \mathrm{Im}(1-\chi_{Y_{A,\delta}})\cap\mathrm{Im}(\Phi(\chi_A))\]
with norm 1. Since $\xi\in \mathrm{Im}(\Phi(\chi_A))\subseteq H$, this gives us that   $\|\Phi(\chi_A)( 1-\chi_{Y_{A,\delta}})\Phi(1)\xi\|=\|\xi\|=1$; contradiction.
\end{proof}

\begin{lemma}\label{LemmaExistenceInjection}
There exists $\delta>0$ and  an injection  $f\colon X\to Y$ such that  $f(y)\in Y_{x,\delta}$ for all $x\in X$. 
\end{lemma}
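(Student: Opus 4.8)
The plan is to obtain $f$ as a \emph{system of distinct representatives} for the family $(Y_{x,\delta})_{x\in X}$: I will use Lemma~\ref{LemmaCardinalityXBDelta} to verify Hall's condition, observe that each set $Y_{x,\delta}$ is finite, and then quote the infinite form of Hall's marriage theorem.

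First I would fix the parameter. Applying Lemma~\ref{LemmaCardinalityXBDelta} with $\varepsilon=1/2$ produces $\delta>0$ such that $|A|\leq|Y_{A,\delta}|$ for every finite $A\subseteq X$, where $Y_{A,\delta}=\bigcup_{x\in A}Y_{x,\delta}$. Next I would record that $|Y_{x,\delta}|\leq\delta^{-2}$ for every $x\in X$. Under Assumption~\ref{Assumption} we have $\Phi(e_{xx})=Ue_{xx}U^*$, which is the rank-one orthogonal projection onto $\C U\delta_x$; since $e_{yy}$ is the rank-one projection onto $\C\delta_y$, the norm of a product of two rank-one projections gives $\norm{\Phi(e_{xx})e_{yy}}=|\langle U\delta_x,\delta_y\rangle|$ for all $y\in Y$. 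Because $\sum_{y\in Y}|\langle U\delta_x,\delta_y\rangle|^2=\norm{U\delta_x}^2=1$, at most $\delta^{-2}$ of these numbers can be $\geq\delta$, so $Y_{x,\delta}$ is finite with $|Y_{x,\delta}|\leq\delta^{-2}$.

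With these two facts I would finish as follows. Consider the bipartite graph on the countable vertex classes $X$ and $Y$ (recall u.l.f.\ spaces are countable) with an edge joining $x$ and $y$ exactly when $y\in Y_{x,\delta}$. Every vertex of $X$ has finite degree by the second step, and Hall's condition $|Y_{A,\delta}|\geq|A|$ holds for every finite $A\subseteq X$ by the first step. The countable version of Hall's marriage theorem (M.\ Hall's theorem on systems of distinct representatives for a family of finite sets) then yields an injection $f\colon X\to Y$ with $f(x)\in Y_{x,\delta}$ for all $x\in X$, which is exactly what is claimed.

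I do not expect a genuine obstacle: the substantive content is Lemma~\ref{LemmaCardinalityXBDelta}, which is already in hand, and this argument merely upgrades the uniformly finite-to-one maps of Lemmas~\ref{LemmaPickMapf} and~\ref{LemmaPickMapfForPropertyA} to an honest injection. The only points requiring care are the uniform finiteness bound $|Y_{x,\delta}|\leq\delta^{-2}$ (this is what makes the infinite Hall theorem applicable, since one half of the graph is infinite) and citing the correct form of Hall's theorem for a countably infinite bipartite graph with finite degrees on the $X$-side.
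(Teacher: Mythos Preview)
Your proposal is correct and follows essentially the same route as the paper: apply Lemma~\ref{LemmaCardinalityXBDelta} to verify Hall's condition and then invoke Hall's marriage theorem. The paper's proof is terser and does not spell out the finiteness of each $Y_{x,\delta}$; your explicit bound $|Y_{x,\delta}|\leq\delta^{-2}$, obtained from the rank-one structure $\Phi(e_{xx})=Ue_{xx}U^*$, is a welcome addition that justifies the use of the infinite form of Hall's theorem.
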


\begin{proof}
Let $\delta$ be given by Lemma \ref{LemmaCardinalityXBDelta} for some $\varepsilon>0$. Define a map $\alpha\colon X\to \cP(Y)$ by letting $\alpha(x)=Y_{x,\delta}$ for all $x\in X$. Since $Y_{A,\delta}=\bigcup_{x\in A}\alpha(x)$, the choice of $\delta$ gives that
\[|A|\leq |Y_{A,\delta}|=\Big|\bigcup_{x\in A}\alpha(x)\Big|.\]
Therefore, by Hall's marriage theorem, the required injection exists.
\end{proof}

\begin{proof}[Proof of Theorem \ref{ThmRigidityUniformRoeAlgEmbeddingsHereditary}(ii)]
We are going to prove that, in $Y$ has property A, the fact that $\cstu(X)$ is isomorphic to a hereditary \cstar-subalgebra of $\cstu(Y)$ is equivalent to the existence of a coarse injection $X\to Y$. 

For the forward direction, let $\Phi\colon \cstu(X)\to \cstu(Y)$ be an isomorphism onto a hereditary \cstar-subalgebra and assume that $Y$ has property A. By Theorem \ref{ThmRigidityUniformRoeAlgEmbeddingsHereditary}(i), $X$ coarsely embeds into $Y$, so $X$ has property A. By Lemma \ref{LemmaPhiStronglyContAndU}, there exists an isometry $U\colon \ell_2(X)\to \ell_2(Y)$ such that  $\Phi(a)=UaU^*$ for all $a\in \cstu(X)$. Hence, Assumption \ref{Assumption} holds.   Let $\delta>0$ and $f\colon X\to Y$ be the injective map given by \ref{LemmaExistenceInjection}. By Lemma \ref{LemmaTheMapsAreCoarse}, $f$ is coarse. By Lemma \ref{LemmaTheMapsAreExpanding}, $f$ is expanding, so we are done.

For the backward direction, if $f\colon X\to Y$ is an injective coarse embedding, then $f\colon X\to f(X)$ is a bijective coarse equivalence, so $\cstu(X)$ and $\cstu(f(X))$ are isomorphic (e.g., \cite[Theorem 8.1]{BragaFarah2018}). Since $\cstu(f(X))=\chi_{f(X)}\cstu(f(X))\chi_{f(X)}$, it follows that $\cstu(f(X))$ is a hereditary \cstar-subalgebra of $\cstu(Y)$, so we are done.
\end{proof}

\section{Digression on our geometric property}\label{SectionGeomProp}

Given a u.l.f. metric space $X$, the property of all of $X$'s sparse subspaces yielding only compact ghost projections is central in these notes. Therefore, it is only natural to try to understand this property better. The most natural question, in the context of large scale geometry, is whether this property is preserved under coarse embeddings/equivalences. Although we were not able to answer this question, there is a similar property which  is indeed a coarse invariant (Theorem \ref{ThmMetricPropertyIsCoarse}). For that, we need to introduce a new object: the   \emph{stable Roe algebra of $X$}. See \cite{SpakulaWillett2013AdvMath} for more details on stable Roe algebras and other types  of Roe algebras (see also Remark \ref{RemarkStableRoe} below).

Let $X$ be a metric space and $H$ be a Hilbert space.  We identify $\ell_2(X)\otimes H$ with $\ell_2(X,H)$ via the isomorphism which sends $\delta_x\otimes v$ to the map $u\colon X\to H$ such that $u(x)=v$ and $u(y)=0$ for all $y\neq x$. Given $x\in X$, let $e_{xx}$ denote the operator on $\ell_2(X,H)$ such that 
\[e_{xx}\delta_z\otimes v=\langle\delta_z,\delta_x\rangle\delta_x\otimes v.\]
For each $a\in \cB(\ell_2(X,H))$ and $x,y\in X$, let $a_{xy}=e_{yy}ae_{xx}$. So $a_{xy}$ can be canonically identified with an element in $\cB(H)$.

\begin{definition}\label{DefiStableRoe}
Let $X$ be a u.l.f. metric space and $H$ be the infinite dimensional separable Hilbert space. The \emph{stable Roe algebra of $X$}, denoted by $\mathrm{C}^*_s(X)$, is the closure in $\cB(\ell_2(X,H))$ of the algebra of all operators $a\in \cB(\ell_2(X,H))$ which satisfy the following:
\begin{enumerate}
\item $a$ has  \emph{finite propagation}, i.e.,  there exists $r>0$ such that $a_{xy}=0$ for all $x,y\in X$ with $d(x,y)>r$, and
\item\label{ItemStableRoe} there exists a finite dimensional subspace $H_a\subset H$ such that $a_{xy}\in \cB(H_a)$ for all $x,y\in X$.
\end{enumerate}
\end{definition}

Clearly, the uniform Roe algebra coincides with $\csts(X)$ if $H=\mathbb C$. Notice also that $\csts(X)$ contains $\cK(\ell_2(X,H))$ and it is canonically isomorphic to $\cstu(X)\otimes \cK(H )$. We now define ghost operators in $\csts(X)$.

\begin{definition}\label{DefiGhostStable}
Let $X$ be a metric space and let $a\in \csts(X)$. We say that $a$ is a \emph{ghost} if for all $\varepsilon>0$ there exists a bounded $ A\subseteq X$  such that $\|a_{x,y}\|<\varepsilon$ for all $x,y\in X\setminus A$.
\end{definition}

We can now define the desired geometric property which we prove it is a coarse invariant.

\begin{definition}
Let $X$ be a metric space. We say that the \emph{all sparse subspaces of $X$ yield only compact ghost projections in $\csts(X)$} if given a sparse subspace $X'\subseteq X$, all the ghost projections in $\csts(X')$ are compact. 
\end{definition}

Clearly, the  property above is (at least formally) more restrictive, than  the property of all sparse subspaces of $X$ yielding only compact ghost projections (in $\cstu(X)$). On the other hand, if $X$ coarsely embeds into a Hilbert space, then all sparse subspaces of $X$ yield only compact ghost projections in $\csts(X)$ -- this follows analogously as  \cite[Lemma 7.3]{BragaFarah2018}.

\begin{lemma}\label{LemmaBlockNonCompGhostProj}
Let $X$ be a metric space and assume that $X'=\bigsqcup_nX_n$ is a sparse subspace of $X$ so that $\csts(X')$ has a noncompact ghost projection. Then there exists a noncompact ghost projection $p\in \csts(X')$ such that $\chi_{X_n}p\chi_{X_m}=0$ for all $n\neq m$.
\end{lemma}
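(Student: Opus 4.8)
The plan is to take the given noncompact ghost projection $q\in\csts(X')$ and replace it by its \emph{block-diagonal part} with respect to the decomposition $\ell_2(X',H)=\bigoplus_n\ell_2(X_n,H)$, then correct that to a genuine projection by continuous functional calculus. I will use ``block diagonal'' to mean $\chi_{X_n}a\chi_{X_m}=0$ for all $n\ne m$; such operators form a unital C*-subalgebra of $\cB(\ell_2(X',H))$, closed under continuous functional calculus.

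First I would set $b=\sum_n\chi_{X_n}q\chi_{X_n}$, the strong-operator-topology sum obtained by discarding all entries of $q$ lying between distinct blocks; it is self-adjoint, block diagonal, and of norm at most $\|q\|$. The first key point is that $b\in\csts(X')$ \emph{and} $q-b$ is compact. For this I would use that $q$ is a norm-limit of finite propagation operators $a$ carrying a common finite-dimensional coefficient space $H_a$ as in Definition~\ref{DefiStableRoe}\eqref{ItemStableRoe}. The block-diagonalization $a_{\mathrm{bd}}=\sum_n\chi_{X_n}a\chi_{X_n}$ still has finite propagation and still lies in $\csts(X')$, while $\|b-a_{\mathrm{bd}}\|\le\|q-a\|$ since a block-diagonal operator has norm equal to the supremum of its block norms; letting $\|q-a\|\to 0$ gives $b\in\csts(X')$. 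Crucially, $a-a_{\mathrm{bd}}=\sum_{n\ne m}\chi_{X_n}a\chi_{X_m}$ is a \emph{finite} sum of finite-rank operators: sparseness of $X'$ forces only finitely many pairs $(X_n,X_m)$ to lie within the fixed distance $\propg(a)$ of one another, and each $\chi_{X_n}a\chi_{X_m}$ has finite rank because $X_n,X_m$ are finite and the entries lie in $\cB(H_a)$. Hence $q-b$ lies within $2\|q-a\|$ of a finite-rank operator, so $q-b\in\cK(\ell_2(X',H))$; note it is self-adjoint.

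Next, since $q^2=q$ and $k:=q-b$ is compact self-adjoint, $b^2-b$ is compact; because $|x^2-x|$ is bounded below on $[1/4,3/4]$ while $\sigma(b^2-b)$ accumulates only at $0$, the set $\sigma(b)\cap[1/4,3/4]$ is finite, so I can choose $t\in(1/4,3/4)$ with $t\notin\sigma(b)$. Then $g:=\chi_{[t,\infty)}$ is continuous on $\sigma(b)\cup\{0,1\}$, idempotent, and real-valued, so $p:=g(b)\in\csts(X')$ is a genuine projection; approximating $g$ uniformly by polynomials exhibits $p$ as a norm-limit of block-diagonal polynomials in $b$ and $1$, hence $p$ is block diagonal, i.e. $\chi_{X_n}p\chi_{X_m}=0$ for $n\ne m$. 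To finish, I note that $q\ne 0$ (being noncompact), and $q\ne 1$, since $X'$ must be infinite here (otherwise $\csts(X')=\cK(\ell_2(X',H))$ has no noncompact elements) and the identity $1=\sum_{x\in X'}e_{xx}$ has $\|1_{xx}\|=1$ for all $x$, so it is not a ghost; thus $\sigma(q)=\{0,1\}$ and $g(q)=q$. Since $b-q$ is compact and $g$ is continuous on $\sigma(b)\cup\sigma(q)$, a polynomial-approximation argument gives that $p-q=g(b)-g(q)$ is compact; hence $p=q+(p-q)$ is a ghost (compact operators are ghosts and ghosts form a linear subspace) and is noncompact.

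The routine parts are the strong-operator convergence of the series defining $b$ and the polynomial-approximation estimates. The only mildly delicate point is the bookkeeping in the middle step showing $q-b$ is compact, which rests on sparseness (finitely many close block-pairs) together with condition~\eqref{ItemStableRoe} in the definition of $\csts$ (finite-rankness of each cross term); I expect this to be the main obstacle, though it is conceptually just the slogan ``off-diagonal with respect to a sparse partition implies compact''.
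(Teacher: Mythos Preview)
Your proof is correct and follows essentially the same route as the paper: both take the block-diagonal part $b=\sum_n\chi_{X_n}q\chi_{X_n}$, show $q-b$ is compact using sparseness and the finite-propagation/finite-coefficient approximants, and then correct $b$ to a genuine block-diagonal projection $p$ with $p-q$ compact. The only difference is cosmetic: where the paper passes to the quotient by compacts and invokes a ``standard argument'' to lift the projection $\pi(q)$ back to $\prod_n\cB(\ell_2(X_n,H))$, you carry out that lifting explicitly via functional calculus (using that $b^2-b$ is compact to find a spectral gap), which is precisely the content of the standard argument and arguably makes the membership $p\in\csts(X')$ more transparent.
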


\begin{proof}
Let $q\in \csts(X')$ be a noncompact ghost projection. Clearly,  it is enough to find a projection $p\in\csts(X')$ with   $\chi_{X_n}p\chi_{p_m}=0$ for all $n\neq m$ and such that $p-q$ is compact. For each $n\in\N$ let $q_n=\chi_{X_n}q\chi_{X_n}$ and 
\[
q'=\mathrm{SOT}\text{-}\lim_N\sum_{n=1}^Nq_n\in\prod_n\cB(\ell_2(X_n,H)).
\]
\begin{claim}\label{ClaimBlockNonCompGhostProj1}
The operator $q-q'$ is compact. In particular, $q'\in \csts(X')$. 
\end{claim}

\begin{proof}
Let $\varepsilon>0$. Since $q\in \csts(X')$, there exists $a\in \csts(X')$ of finite propagation such that $\|q-a\|<\varepsilon$. Since $X'=\bigsqcup_nX_n$ is sparse, we can write $a=c+w$, where both $c$ and $w$ have finite propagation, $c\in \csts(X')\cap \prod_n\cB(\ell_2(X_n,H))$, $w\in \cK(\ell_2(X',H))$, and $\chi_{X_n}w\chi_{X_n}=0$ for all $n\in\N$. Therefore, 
\begin{align*}
\|q'-c\|&=\sup_{n}\|q_n-\chi_{X_n}c\chi_{X_n}\|\\
&=\sup_{n}\|\chi_{X_n}q\chi_{X_n}-\chi_{X_n}(c+w)\chi_{X_n}\|\\
&\leq \|q-(c+w)\|\\
&<\varepsilon.
\end{align*}
Hence, 
\[\|q-q'-w\|\leq \|q-(c+w)\|+\|c-q'\|\leq 2\varepsilon.\]
Since $w$ is compact and $\varepsilon>0$ was arbitrary, we conclude that  $q-q'$ is compact. In particular,  $q'=q-(q-q')\in \csts(X')$. 
\end{proof}
Let $A_n=\cB(\ell_2(X_n,H))$. Consider the canonical quotient map
\[
\pi\colon \csts(X)\to\csts(X)/\mathcal K(\ell_2(X,H)).
\]
Then 
\[
\pi(q)\in\pi\Big(\prod A_n\Big)=\prod A_n/\Big(\prod A_n\cap\mathcal K(\ell_2(X,H))\Big)\cong \prod_nA_n/\bigoplus_n A_n
\]
and $\pi(q)$ is a projection. By a standard argument, we can lift a projection in $\prod_nA_n/\bigoplus_n A_n$ to a projection in $\prod_n A_n$, so there is projection $p\in\prod_n\cB(\ell_2(X_n,H)$ such that $\pi(p)=\pi(q)$.
\end{proof}

\begin{theorem}\label{ThmMetricPropertyIsCoarse}
Let $X$ be a metric space, $Y$ be  u.l.f. metric space and assume that there exists a uniformly finite-to-one coarse map $f\colon X\to Y$. If the sparse subspaces of $Y$ yield only compact ghost projections in $\csts(Y)$, then the same holds for $X$. 
\end{theorem}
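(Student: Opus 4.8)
The plan is to argue by contradiction. Suppose $X'\subseteq X$ is a sparse subspace, say $X'=\bigsqcup_n X_n$ with each $X_n$ finite and $d(X_n,X_m)\to\infty$, and that $\csts(X')$ contains a noncompact ghost projection. By Lemma~\ref{LemmaBlockNonCompGhostProj} I may assume this projection $q\in\csts(X')$ satisfies $\chi_{X_n}q\chi_{X_m}=0$ for $n\neq m$; writing $q_n=\chi_{X_n}q\chi_{X_n}$, one then has $q=\mathrm{SOT}\text{-}\sum_n q_n$. Since $X_n$ is finite and $\csts(X')\cong\cstu(X')\otimes\cK(H)$, a short computation gives $\chi_{X_n}\csts(X')\chi_{X_n}=\cK(\ell_2(X_n,H))$, so each $q_n$ is a finite-rank projection; hence $q$ is noncompact exactly when $q_n\neq 0$ for infinitely many $n$. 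Discarding the zero blocks and relabelling, assume $q_n\neq0$ for all $n$. A routine use of the ghost property of $q$ together with sparseness (a bounded set meets only finitely many $X_n$) shows $\varepsilon_n:=\sup_{x,x'\in X_n}\|(q_n)_{xx'}\|\to0$.

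Next I would extract a subsequence whose $f$-images form a sparse family in $Y$. Let $k=\sup_{y\in Y}|f^{-1}(\{y\})|<\infty$, and fix a nondecreasing $c$ with $d(x,x')\le r\Rightarrow\partial(f(x),f(x'))\le c(r)$. Since the $X_n$ are pairwise disjoint, each $y\in Y$ lies in at most $k$ of the sets $f(X_n)$, so, $Y$ being u.l.f., every bounded subset of $Y$ meets only finitely many of the $f(X_n)$. Thus I can choose $n_1<n_2<\cdots$ with $\partial\big(f(X_{n_{j+1}}),\bigcup_{l\le j}f(X_{n_l})\big)>j$ for all $j$: at each step only finitely many indices are excluded, and infinitely many remain. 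Then $Y':=\bigsqcup_j f(X_{n_j})$ is a sparse (in particular pairwise disjoint) subspace of $Y$, and $X'':=\bigsqcup_j X_{n_j}$ is a sparse subspace of $X$ for which $q'':=\chi_{X''}q\chi_{X''}=\mathrm{SOT}\text{-}\sum_j q_{n_j}\in\csts(X'')$ is still a noncompact ghost projection.

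Then I would transport $q''$ to $\csts(Y')$ through an isometry designed to respect condition~\eqref{ItemStableRoe} of Definition~\ref{DefiStableRoe}. Fix a unitary $w\colon\mathbb{C}^k\otimes H\to H$ and, for each $y\in Y'$, an injection $f^{-1}(\{y\})\cap X''\hookrightarrow\{1,\dots,k\}$; these produce isometries $\iota_y\colon\ell_2(f^{-1}(\{y\})\cap X'',H)\to\ell_2(\{y\},H)$ with the key property that for every finite-dimensional $H_0\subseteq H$ one has $\iota_y\big(\ell_2(f^{-1}(\{y\})\cap X'',H_0)\big)\subseteq w(\mathbb{C}^k\otimes H_0)$, independently of $y$. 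Let $V=\bigoplus_{y\in Y'}\iota_y\colon\ell_2(X'',H)\to\ell_2(Y',H)$, an isometry which is block-diagonal for the decompositions $\bigsqcup_j X_{n_j}$ and $\bigsqcup_j f(X_{n_j})$ and sends the fibre over $x$ into the fibre over $f(x)$. Because $f$ is coarse, conjugation by $V$ sends an operator of propagation $r$ to one of propagation at most $c(r)$; because of the uniform choice of $w$, it preserves~\eqref{ItemStableRoe}; and it is norm-contractive. Hence $\mathrm{Ad}(V)$ maps $\csts(X'')$ into $\csts(Y')$, so $\tilde q:=Vq''V^*\in\csts(Y')$ is a projection. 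It is noncompact, since $V^*\tilde qV=q''$; and it is block-diagonal for $\bigsqcup_j f(X_{n_j})$ with $j$-th block $V_jq_{n_j}V_j^*$, whose matrix coefficients have norm at most $k\,\varepsilon_{n_j}\to0$, so $\tilde q$ is a ghost. This contradicts the assumption that all sparse subspaces of $Y$ yield only compact ghost projections in $\csts(Y)$, completing the proof.

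The verifications left implicit above — that $\chi_{X_n}\csts(X')\chi_{X_n}=\cK(\ell_2(X_n,H))$, that $\varepsilon_n\to0$, that $\chi_{X''}\csts(X')\chi_{X''}\subseteq\csts(X'')$, and the estimate $\|(V_jq_{n_j}V_j^*)_{yy'}\|\le k\,\varepsilon_{n_j}$ — are short. The main obstacle, and the heart of the argument, is the claim that $\mathrm{Ad}(V)$ maps $\csts(X'')$ into $\csts(Y')$, i.e.\ that condition~\eqref{ItemStableRoe} survives conjugation by $V$; this is precisely where \emph{uniform} finite-to-oneness is needed, since the single bound $k$ lets all the fibrewise identifications factor through one fixed model $\mathbb{C}^k\otimes H$, so that a single finite-dimensional subspace of $H$ simultaneously controls every block of $VaV^*$.
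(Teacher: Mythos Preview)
Your proof is correct and follows the same overall arc as the paper's: invoke Lemma~\ref{LemmaBlockNonCompGhostProj} to make the ghost projection block-diagonal, pass to a subsequence so that the $f$-images form a sparse family in $Y$, and then transport the projection to $\csts(Y')$ to obtain a contradiction. The one genuine difference is in how the uniform finite-to-oneness is exploited. The paper first \emph{reduces to the case where $f$ is injective}: it replaces $Y$ by $Y_k=Y\times\{1,\dots,k\}$ with the obvious metric, observes that $\csts(Y_k)\cong\csts(Y)\otimes M_k\cong\csts(Y)$ and that the ghost-projection property for sparse subspaces passes between $Y$ and $Y_k$, and then lifts $f$ to an injection $f_k\colon X\to Y_k$. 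After this reduction the transport is the trivial relabelling $r_{yy'}=p'_{xx'}$ for $y=f(x)$, $y'=f(x')$. You instead keep $Y$ unchanged and absorb the multiplicity into the Hilbert-space factor via a fixed unitary $w\colon\mathbb{C}^k\otimes H\to H$, building the isometry $V$ fibrewise. Both arguments exploit exactly the same stability phenomenon (the extra $\mathbb{C}^k$ disappears into the $\cK(H)$ tensor factor); the paper's version front-loads this as a clean reduction and makes the remaining transport a one-liner, while yours avoids changing the target space at the cost of having to verify directly that $\mathrm{Ad}(V)$ preserves condition~\eqref{ItemStableRoe}. Your verification of that point is the right one and is where the argument earns its keep.
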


\begin{proof}
First, we are going to show that we can assume that $f$ is injective. Let $\partial$ be the distance on $Y$. Pick $k\geq 1$ so that $f$ is $k$-to-one. Let $Y_k=Y\times \{1,\ldots,k\}$ and 
define a distance $\partial_k$ on $Y_k$ as 
\[
\partial_k((y,i),(y',j))=\begin{cases}d_Y(y,y'),&\text{ if }y\neq y'\\
1,&\text{ if }y=y'.
\end{cases}
\] 
Note that 
\[
\csts(Y_k)\cong\csts(Y)\otimes M_k\cong\cstu(Y)\otimes\mathcal K(H)\otimes M_k\cong\csts(Y),
\]
where $M_k$ denotes the \cstar-algebra of $k\times k$-matrices. Moreover, if a sparse subspace $Y'\subseteq Y_k$ yields a noncompact ghost projection in $\csts(Y_k)$, then the corresponding sparse subspace $\{y\in Y\mid\exists i\leq k ((y,i)\in Y')\}$ yields a noncompact ghost projection in $\csts(Y)$. In particular, sparse subspaces of $Y$ yield noncompact ghosts projections in $\csts(Y)$ if and only if the same happens to sparse subspaces of $Y_k$. For each $y\in Y$, enumerate $f^{-1}(y)=\{x^y_1,\ldots,x^y_i\}$ for some $i\leq k$. Then each $x\in X$ is equal to 
$x^y_j$ for a unique $y$ and $j$,  and we can define $f_k\colon X\to Y_k$  as $f_k(x^y_j)=(y,j)$. Then $f_k$ is coarse and injective. In order to avoid heavy notation, we let $Y=Y_k$, $\partial=\partial_k$ and $f=f_k$.

We now proceed by contradiction. Suppose that there is a sparse subspace  $X'=\bigsqcup X_n\subseteq X$  yielding a noncompact ghost projection $p\in\csts(X)$. By Lemma \ref{LemmaBlockNonCompGhostProj}, we can assume that $p\in\prod_n\mathcal B(\ell_2(X_n,H))$. Let $Z_n=f'(X_n)$. Although $\bigsqcup_n Z_n$ might not be a sparse subspace of $Y$ because $f$ is not expanding, there is an infinite $M\subseteq \N$ such that $Z'=\bigsqcup_{n\in M}Z_n$ is sparse. Let $q=\chi_{\bigcup_{n\in M} X_n}$, and $p'=qpq$. Then $p'$ is a noncompact ghost projection. Define $r\in\cB(\ell_2(Y,H))$ by 
\[
r_{yy'}=\begin{cases}
p'_{xx'},&\text{ if }y=f(x), y'=f(x')\\
0,&\text{ otherwise.}
\end{cases}
\] 
Since $f$ is coarse, $r=(r_{yy'})_{yy'\in Y}$ belongs to $\csts(Y)$. Moreover $r$ is a noncompact ghost projection such that $r\in\prod_{n\in M}\cB(\ell_2(Y_n,H))$. Therefore 
 $Z'$ is a sparse subspace of $Y$ yielding a noncompact ghost projection. This is a contradiction.\end{proof}

\begin{corollary}
The property for a u.l.f. metric space $X$ of its sparse subspaces  yielding  only compact ghost projections in $\csts(X)$ is a coarse property.
\end{corollary}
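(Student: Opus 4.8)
The corollary is an immediate consequence of Theorem~\ref{ThmMetricPropertyIsCoarse} once one observes that a coarse equivalence between two u.l.f.\ metric spaces is automatically uniformly finite-to-one. So the plan is: let $X$ and $Y$ be u.l.f.\ metric spaces that are coarsely equivalent, fix a coarse equivalence $f\colon X\to Y$ with a coarse inverse $g\colon Y\to X$ (so $\sup_{x\in X}d(g(f(x)),x)<\infty$), and show that $f$ meets the hypotheses of Theorem~\ref{ThmMetricPropertyIsCoarse}.

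\textbf{Key step (the only nontrivial point).} I would first bound $\diam(f^{-1}(y))$ uniformly in $y\in Y$. Indeed, set $C=2\sup_{x\in X}d(g(f(x)),x)<\infty$. If $x,x'\in f^{-1}(y)$, then $g(f(x))=g(y)=g(f(x'))$, hence
\[
d(x,x')\le d(x,g(f(x)))+d(g(f(x)),g(f(x')))+d(g(f(x')),x')\le C.
\]
Since $X$ is u.l.f., $|B_C(z)|\le N$ for some $N\in\N$ and all $z\in X$, so $|f^{-1}(y)|\le N$ for every $y\in Y$; that is, $f$ is $N$-to-one. Being a coarse equivalence, $f$ is in particular a coarse map, so $f\colon X\to Y$ is a uniformly finite-to-one coarse map between u.l.f.\ spaces. (This is really the whole content; everything else is bookkeeping.)

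\textbf{Conclusion.} Now apply Theorem~\ref{ThmMetricPropertyIsCoarse}: if all sparse subspaces of $Y$ yield only compact ghost projections in $\csts(Y)$, then the same holds for $X$. Swapping the roles of $X$ and $Y$ (using the coarse inverse $g$, which by the same argument is a uniformly finite-to-one coarse map $Y\to X$) gives the reverse implication, so the property is preserved under coarse equivalence, i.e.\ it is a coarse property. I do not foresee any genuine obstacle here; the one thing to be careful about is simply invoking u.l.f.-ness of \emph{both} spaces (it is needed on the domain side to pass from bounded fibers to uniformly finite fibers, and on the codomain side it is a standing hypothesis of Theorem~\ref{ThmMetricPropertyIsCoarse}).
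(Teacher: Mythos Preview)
Your proposal is correct and follows essentially the same approach as the paper: reduce to Theorem~\ref{ThmMetricPropertyIsCoarse} by verifying that the relevant map is uniformly finite-to-one. The only minor difference is that the paper observes more directly that \emph{any} coarse embedding out of a u.l.f.\ space is uniformly finite-to-one (the expanding condition forces fibers to have uniformly bounded diameter), whereas you argue via the coarse inverse; both are standard and equally valid.
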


\begin{proof}
Notice that, since $X$ is u.l.f.,  any coarse embedding $X\to Y$  is uniformly finite-to-one. So the result follows from Theorem \ref{ThmMetricPropertyIsCoarse}.
\end{proof}

\begin{remark}\label{RemarkStableRoe}
Similarly to Definition \ref{DefiGhostStable}, we can define ghost operators in the \emph{Roe algebra $\mathrm{C}^*(X)$} and in the \emph{uniform algebra $\mathrm{UC}^*(X)$} (see \cite[Example 2.2]{SpakulaWillett2013AdvMath} for definitions). The proofs of Lemma \ref{LemmaBlockNonCompGhostProj} and Theorem \ref{ThmMetricPropertyIsCoarse} remains valid for  the property of all sparse subspaces yielding only compact ghost projections in $\mathrm{C}^*(X)$ (or $\mathrm{UC}^*(X)$).
\end{remark}

\section{Open problems}\label{SectionOpenProb}

In this subsection, we gather the most important questions which this paper leaves open. Firstly, our methods do not take care of not compact preserving embeddings $\cstu(X)\to \cstu(Y)$. It is therefore not clear whether the conclusion of Corollary \ref{CorRigidityUniformRoeAlgEmbeddingsAsymDim} holds for arbitrary embeddings.

\begin{problem}\label{ProblemEmbPreservesAsymDim}
Let $X$ and $Y$ be u.l.f.  metric spaces such that $\cstu(X)$ embeds into $\cstu(Y)$.  
\begin{enumerate}
\item Does it follow that $\mathrm{asydim}(X)\leq \mathrm{asydim}(Y)$?
\item If $Y$ has FDC, does $X$ have FDC as well?
\end{enumerate}
\end{problem}

A counterexample to Problem~\ref{ProblemEmbPreservesAsymDim} would have to be very different in nature from the spaces constructed in Proposition~\ref{PropositionEmbDoesNotImpCoarseEmb}. A version of either Theorem \ref{ThmRigidityUniformRoeAlgEmbeddingsFINITEUNION} or Theorem \ref{ThmRigidityUniformRoeAlgEmbeddings} for not compact preserving embeddings would give us a positive answer to Problem \ref{ProblemEmbPreservesAsymDim}.

\begin{problem}\label{P.1}
Let $X$ and $Y$ be u.l.f. metric spaces and assume that $\cstu(X)$ embeds into $\cstu(Y)$. Does it follow that there exists a uniformly finite-to-one coarse map $X\to Y$, or at least that 
$X$ has a partition $X=\bigsqcup_{n=1}^kX_n$ such that each $X_k$ can be mapped injectively into $Y$ by  a coarse map? What if $Y$ has property A, or if it coarsely embeds into a Hilbert space? 
\end{problem}

Although  Proposition \ref{PropSpacesWhosePartitionEmbs} implies that the conclusion of Theorem~\ref{ThmRigidityUniformRoeAlgEmbeddingsFINITEUNION} is strictly weaker than the existence of a uniformly finite-to-one coarse map $X\to Y$, the following local variant of Problem~\ref{P.1} remains open.

\begin{problem}
Are there  u.l.f. metric spaces $X$ and $Y$ such that $\cstu(X)$ embeds into $\cstu(Y)$ by a compact preserving map but so that there is no uniformly  finite-to-one  coarse map $X\to Y$? 
\end{problem}

By Theorem \ref{ThmEmbImpliesStrongEmb}, we can often without  loss of generality assume that embeddings are strongly continuous. It would be very interesting to obtain an analogous result for rank  preservation 
and compact preservation. In particular, this would imply a positive answer to Problem \ref{ProblemEmbPreservesAsymDim}. Precisely, we ask the following.

\begin{problem}\label{ProblemObtainRankPreservation}
Let  $X$ and $Y$ be  u.l.f. metric spaces,  and  $\Phi\colon \cstu(X)\to \cstu(Y)$ be an embedding. Is there a projection $p\in \cstu(Y)$  in $\Phi(\cstu(X))'\cap \cstu(Y)$ such that  
 \[
 a\in \cstu(X)\mapsto p\Phi(a)p\in \cstu(Y)
 \] 
is a rank preserving embedding? What if it is compact preserving? What if $Y$ has property A?
\end{problem}

Corollary \ref{CorRigidityUniformRoeAlgEmbeddingsAsymDim} gives us geometric properties which are preserved under compact preserving embeddings of uniform Roe algebras. It would be interesting to add property A and coarse embeddability into a Hilbert space to this list of properties. Because of that, we ask the following.

\begin{problem}\label{ProblemPropAEmbHilb}
Let $X$ and $Y$ be u.l.f. metric spaces and assume that there exists a uniformly finite-to-one coarse map $X\to Y$. 
\begin{enumerate}
\item\label{ProblemPropAEmbHilb.1} If $Y$ has property A, does it follow that $X$ has property A?
\item If $Y$ coarsely embeds into a Hilbert space, does the same hold for $X$?
\end{enumerate}
\end{problem}

In  \cite[Theorem 1.1]{Sako2012} it has been announced that a u.l.f. metric space has property A if and only if $\cstu(X)$ is exact. Since a \cstar-subalgebra of an exact algebra is also exact, this result would give a positive answer to Problem \ref{ProblemPropAEmbHilb}\eqref{ProblemPropAEmbHilb.1}. 
The special case when $X$ is a group with the Cayley metric is well-known (\cite[Theorem 5.1.6]{BrownOzawa}).

At last, in the spirit of \S\ref{SectionGeomProp}, it would be interesting to know whether our main geometric property is a coarse invariant.

\begin{problem}
Let $X$ and $Y$ be u.l.f. metric spaces and assume that $Y$ coarsely embeds into $X$. If all sparse subspaces of $X$ yield only compact ghost projections, does the same hold for $Y$? What if $X$ and $Y$ are coarsely equivalent? 
\end{problem}


\begin{thebibliography}{10}

\bibitem{ArzhantsevaGuentnerSpakula2012}
G.~Arzhantseva, E.~Guentner, and J.~{\v{S}}pakula.
\newblock Coarse non-amenability and coarse embeddings.
\newblock {\em Geom. Funct. Anal.}, 22(1):22--36, 2012.

\bibitem{BellDranishnikov2008}
G.~Bell and A.~Dranishnikov.
\newblock Asymptotic dimension.
\newblock {\em Topology Appl.}, 155(12):1265--1296, 2008.

\bibitem{BragaFarah2018}
B.~M. {Braga} and I.~{Farah}.
\newblock {On the rigidity of uniform Roe algebras over uniformly locally
  finite coarse spaces}.
\newblock {\em ArXiv:1805.04236}, May 2018.

\bibitem{BragaFarahVignati2018}
B.~M. {Braga}, I.~{Farah}, and A.~{Vignati}.
\newblock {Uniform Roe coronas}.
\newblock {\em ArXiv:1810.07789}, October 2018.

\bibitem{BrodzkiNibloSpakulaWillettWright2013}
J.~Brodzki, G.~A. Niblo, J.~\v{S}pakula, R.~Willett, and N.~Wright.
\newblock Uniform local amenability.
\newblock {\em J. Noncommut. Geom.}, 7(2):583--603, 2013.

\bibitem{BrownOzawa}
N.~P. Brown and N.~Ozawa.
\newblock {\em {\cstar}-algebras and finite-dimensional approximations},
  volume~88 of {\em Graduate Studies in Mathematics}.
\newblock American Mathematical Society, Providence, RI, 2008.

\bibitem{ChenTesseraWangYu2008}
X.~Chen, R.~Tessera, X.~Wang, and G.~Yu.
\newblock Metric sparsification and operator norm localization.
\newblock {\em Adv. Math.}, 218(5):1496--1511, 2008.

\bibitem{EwertMeyer2019}
E.~Ewert and R.~Meyer.
\newblock Coarse {G}eometry and {T}opological {P}hases.
\newblock {\em Comm. Math. Phys.}, 366(3):1069--1098, 2019.

\bibitem{FarahBook2000}
I.~Farah.
\newblock Analytic quotients: theory of liftings for quotients over analytic
  ideals on the integers.
\newblock {\em Mem. Amer. Math. Soc.}, 148(702):xvi+177, 2000.

\bibitem{FinnSell2014}
M.~Finn-Sell.
\newblock Fibred coarse embeddings, a-{T}-menability and the coarse analogue of
  the {N}ovikov conjecture.
\newblock {\em J. Funct. Anal.}, 267(10):3758--3782, 2014.

\bibitem{GuentnerTesseraYu2012}
E.~Guentner, R.~Tessera, and G.~Yu.
\newblock A notion of geometric complexity and its application to topological
  rigidity.
\newblock {\em Invent. Math.}, 189(2):315--357, 2012.

\bibitem{GuentnerTesseraYu2013}
E.~Guentner, R.~Tessera, and G.~Yu.
\newblock Discrete groups with finite decomposition complexity.
\newblock {\em Groups Geom. Dyn.}, 7(2):377--402, 2013.

\bibitem{Kubota2017}
Y.~Kubota.
\newblock Controlled topological phases and bulk-edge correspondence.
\newblock {\em Comm. Math. Phys.}, 349(2):493--525, 2017.

\bibitem{MurphyBook}
G.~Murphy.
\newblock {\em {$C^*$}-algebras and operator theory}.
\newblock Academic Press, Inc., Boston, MA, 1990.

\bibitem{NowakYuBook}
P.~Nowak and G.~Yu.
\newblock {\em Large scale geometry}.
\newblock EMS Textbooks in Mathematics. European Mathematical Society (EMS),
  Z\"urich, 2012.

\bibitem{Roe1988}
J.~Roe.
\newblock An index theorem on open manifolds. {I}, {II}.
\newblock {\em J. Differential Geom.}, 27(1):87--113, 115--136, 1988.

\bibitem{Roe1993}
J.~Roe.
\newblock Coarse cohomology and index theory on complete {R}iemannian
  manifolds.
\newblock {\em Mem. Amer. Math. Soc.}, 104(497):x+90, 1993.

\bibitem{RoeBook}
J.~Roe.
\newblock {\em Lectures on coarse geometry}, volume~31 of {\em University
  Lecture Series}.
\newblock American Mathematical Society, Providence, RI, 2003.

\bibitem{RoeWillett2014}
J.~Roe and R.~Willett.
\newblock Ghostbusting and property {A}.
\newblock {\em J. Funct. Anal.}, 266(3):1674--1684, 2014.

\bibitem{Sako2012}
H.~{Sako}.
\newblock {Translation C\^{}*-algebras and property A for uniformly locally
  finite spaces}.
\newblock {\em ArXiv e-prints}, December 2012.

\bibitem{Sako2014}
H.~Sako.
\newblock Property {A} and the operator norm localization property for discrete
  metric spaces.
\newblock {\em J. Reine Angew. Math.}, 690:207--216, 2014.

\bibitem{SkandalisTuYu2002}
G.~Skandalis, J.~L. Tu, and G.~Yu.
\newblock The coarse {B}aum-{C}onnes conjecture and groupoids.
\newblock {\em Topology}, 41(4):807--834, 2002.

\bibitem{SpakulaTikuisis2019}
J.~{\v{S}}pakula and A.~Tikuisis.
\newblock Relative commutant pictures of roe algebras.
\newblock {\em Communications in Mathematical Physics}, Jan 2019.

\bibitem{SpakulaWillett2013AdvMath}
J.~{\v{S}}pakula and R.~Willett.
\newblock On rigidity of {R}oe algebras.
\newblock {\em Adv. Math.}, 249:289--310, 2013.

\bibitem{SpakulaWillett2017}
J.~\v{S}pakula and R.~Willett.
\newblock A metric approach to limit operators.
\newblock {\em Trans. Amer. Math. Soc.}, 369(1):263--308, 2017.

\bibitem{SpakulaZhang2018}
J.~{{\v{S}}pakula} and J.~{Zhang}.
\newblock {Quasi-Locality and Property A}.
\newblock {\em ArXiv:1809.00532}, September 2018.

\bibitem{WhiteWillett2017}
S.~{White} and R.~Willett.
\newblock {Cartan subalgebras of uniform Roe algebras}.
\newblock {\em ArXiv:1808.04410}, August 2018.

\bibitem{Willett2009}
R.~Willett.
\newblock Some notes on property {A}.
\newblock In {\em Limits of graphs in group theory and computer science}, pages
  191--281. EPFL Press, Lausanne, 2009.

\bibitem{Yu2000}
G.~Yu.
\newblock The coarse {B}aum-{C}onnes conjecture for spaces which admit a
  uniform embedding into {H}ilbert space.
\newblock {\em Invent. Math.}, 139(1):201--240, 2000.

\end{thebibliography}
\end{document}